\providecommand{\U}[1]{\protect\rule{.1in}{.1in}}
\newtheorem{theorem}{Theorem}[section]
\newtheorem{lemma}[theorem]{Lemma}
\newtheorem{proposition}[theorem]{Proposition}
\newtheorem{remark}[theorem]{Remark}
\newenvironment{proof}[1][Proof]{\noindent\textbf{#1.} }{\ \rule{0.5em}{0.5em}}
\numberwithin{equation}{section}
\title{Scaling limits for weakly pinned Gaussian random fields \\under the presence of two possible candidates}
\date{June 27, 2014}
\author{Erwin Bolthausen$^{1)}$, Taizo Chiyonobu$^{2)}$ and Tadahisa Funaki$^{3)}$}
\begin{document}
\maketitle

\begin{abstract}
\noindent We study the scaling limit and prove the law of large numbers for
weakly pinned Gaussian random fields under the critical situation that two
possible candidates of the limits exist at the level of large deviation
principle. This paper extends the results of \cite{BFO}, \cite{FO} for 
one dimensional fields to higher dimensions: $d\ge3 $, at least if the strength 
of pinning is sufficiently large.
\footnote{
\hskip -6mm
$^{1)}$ Institut f\"ur Mathematik, Universit\"at Z\"{u}rich, 
Winterthurerstrasse 190, 8057 Z\"urich, Switzerland.}
\footnote{
e-mail: eb@math.uzh.ch}
\footnote{
\hskip -6mm
$^{2)}$ Department of Mathematics, Kwansei-Gakuin University,
Sanda-city, Hyogo 669-1337, Japan.} 
\footnote{
e-mail:chiyo@kwansei.ac.jp} 
\footnote{
\hskip -6mm
$^{3)}$ Graduate School of Mathematical Sciences, The University of Tokyo,
Komaba, Tokyo 153-8914, Japan.} 
\footnote{
e-mail:funaki@ms.u-tokyo.ac.jp, Fax: +81-3-5465-7011} 
\footnote{
\hskip -6mm
\textit{2010Mathematics Subject Classification: Primary 60K35; Secondary 60F10, 82B41}}
\footnote{
\hskip -6mm
\textit{Keywords: Gaussian field, Interface model, Pinning, Scaling limit,
Large deviation, Minimizers.}} 
\footnote{
\hskip -6mm
\textit{Abbreviated title $($running
head$)$: Scaling limits for weakly pinned Gaussian random fields.}} 
\footnote{
\hskip -6mm
\textit{The author$^{3)}$ is supported in part by the JSPS Grants 
$($A$)$ 22244007, $($B$)$ 26287014 and 26610019.}}
\footnote{
\hskip -6mm
\textit{The visit of the author$^{1)}$ to Tokyo was supported by the program of
Leading Graduate Course for Frontiers of Mathematical Sciences and Physics at
the University of Tokyo, and by an SNF grant 
No 200020$\underline{\phantom{a} }$138141.}}
\end{abstract}

\section{Introduction and main result}

This paper is concerned with weakly pinned Gaussian random fields which
are microscopically defined on a $d$-dimensional region $D_N$ of 
large size $N$.  We study its macroscopic limit by scaling down
its size to $O(1)$  as $N\to\infty$ under the critical situation 
that two possible candidates of the limits exist at the level of 
rough large deviations.  We work out which one really appears 
in the limit assuming that $d\ge 3$ and the strength $\varepsilon>0$
of the pinning is sufficiently large.

\subsection{Weakly pinned Gaussian random fields} 

We work on the $d$-dimensional square lattice
$D_{N}=\{0,1,2,\ldots,N\}\times{\mathbb{T}}_{N}^{d-1}$ and denote its elements
by $i=(i_{1},i_{2},\ldots,i_{d})\equiv(i_{1},\underline{i})\in D_{N}$, where
${\mathbb{T}}_{N}^{d-1}=({\mathbb{Z}}/N{\mathbb{Z}})^{d-1}$ is the
$(d-1)$-dimensional lattice torus. In other words, we consider the lattice
under periodic boundary conditions for the coordinates except the first
one. The left and right boundaries of $D_{N}$ are denoted by $\partial
_{L}D_{N}=\{0\}\times{\mathbb{T}}_{N}^{d-1}$ and $\partial_{R}D_{N}%
=\{N\}\times{\mathbb{T}}_{N}^{d-1}$, respectively. We set $\partial
D_{N}=\partial_{L}D_{N}\cup\partial_{R}D_{N}$ and $D_N^\circ=D_{N}%
\setminus\partial D_{N}$.

The Hamiltonian is associated with an ${\mathbb{R}}$-valued field $\phi
=(\phi_{i})_{i\in D_{N}}\in{\mathbb{R}}^{D_{N}}$ over $D_{N}$ by
\begin{equation}
H_{N}(\phi)=\frac{1}{2}\sum_{\langle i,j\rangle\subset D_{N}}(\phi_{i}%
-\phi_{j})^{2}, \label{eq:1.1}%
\end{equation}
where the sum is taken over all undirected bonds $\langle i,j\rangle$ in
$D_{N}$, i.e., all pairs $\{i,j\}$ such that $i,j\in D_{N}$ and $|i-j|=1$. 
We sometimes denote $\phi_i$ by $\phi(i)$.  For
given $a,b >0$, we impose the Dirichlet boundary condition for
$\phi$ at $\partial D_{N}$ by
\begin{equation}
\phi_{i}=aN\quad\text{for }\;i\in\partial_{L}D_{N},\quad\phi_{i}%
=bN\quad\text{for }\;i\in\partial_{R}D_{N}. \label{eq:1.2}%
\end{equation}
For $\varepsilon\geq0$, the strength of the pinning force
toward $0$ acting on the field $\phi$, we introduce the Gibbs probability
measure on ${\mathbb{R}}^{D_{N}^{\circ}}$:
\begin{equation}
\mu_{N}^{aN,bN,\varepsilon}(d\phi)=\frac{1}{Z_{N}^{aN,bN,\varepsilon}%
}\mathrm{e}^{-H_{N}^{aN,bN}(\phi)}\prod_{i\in D_{N}^{\circ}}\left[
\varepsilon\delta_{0}(d\phi_{i})+d\phi_i\right], \label{eq:1.3}%
\end{equation}
where $Z_{N}^{aN,bN,\varepsilon}$ is the normalizing constant (partition
function) and $H_{N}^{aN,bN}(\phi)$ is the Hamiltonian $H_{N}(\phi)$
with the boundary condition \eqref{eq:1.2}. We sometimes regard
$\mu_{N}^{aN,bN,\varepsilon}$ as a probability measure on ${\mathbb{R}}%
^{D_{N}}$ by extending it over $\partial D_{N}$ due to the condition \eqref{eq:1.2}.

\subsection{Scaling and large deviation rate functional} 
Let $D= [0,1]\times{\mathbb{T}}^{d-1}$ be
the macroscopic region corresponding to $D_{N}$, where ${\mathbb{T}}^{d-1}
=({\mathbb{R}}/{\mathbb{Z}})^{d-1}$ is the $(d-1)$-dimensional unit torus. 
We associate a macroscopic height field  
$h^N: D\to {\mathbb R}$ with the microscopic one 
$\phi\in{\mathbb{R}}^{D_{N}}$ as a step function defined by
\begin{equation} \label{eq:Ma-H-1}
h^N(t) = \frac1N \phi(i), \quad t =(t_1,\underline{t}) 
\in B\big(\frac{i}N,\frac1N\big)\cap D, \; i \in D_N,
\end{equation}
where $B\big(\frac{i}N,\frac1N\big)$ denotes the box $\big[\frac{i-\frac12}N, 
\frac{i+\frac12}N\big)^d$ with the center $\frac{i}N$ and sidelength $\frac1N$
considered periodically in the direction of $\underline{t}$.
It is sometimes convenient to introduce another macroscopic filed $h^N$,
denoted by $h^N_{\text{PL}}$, as a polilinear interpolation of 
$\frac1N \phi(i)$:
\begin{equation} \label{eq:Ma-H-2}
h^N_{\rm{PL}}(t) = \frac1N \sum_{v\in \{0,1\}^d}
\left[ \prod_{\alpha=1}^d \big( v_\alpha\{Nt_\alpha\} + 
 (1-v_\alpha)(1-\{Nt_\alpha\}) \big) \right] \phi([Nt]+v),
\end{equation}
where $[\cdot]$ and $\{\cdot\}$ stand for the integer and
the fractional parts, respectively, see (1.17) in \cite{DGI}.
Note that $h^N_{\rm{PL}}\in C(D,\mathbb{R})$.
We will prove that $h^N$ and $h_{\text{PL}}^N$ are close enough
in a superexponential sense; see Lemma \ref{lem:Ma-H-1} below.
Our goal is to study the asymptotic behavior of
$h^{N}$ distributed under $\mu_{N}^{aN,bN,\varepsilon}$ as $N\to\infty$.

We will prove that a large deviation principle (LDP) holds for $h^{N}$
under $\mu_{N}^{aN,bN,\varepsilon}$, roughly stating
\[
\mu_{N}^{aN,bN,\varepsilon}(h^{N}\sim h)\sim\mathrm{e}^{-N^{d}\Sigma^{\ast
}(h)},
\]
as $N\rightarrow\infty$ with an unnormalized rate functional
\begin{equation}
\Sigma(h)=\frac{1}{2}\int_{D}|\nabla h(t)|^{2}dt-\xi^{\varepsilon}\left\vert
\{t\in D;h(t)=0\}\right\vert , \label{eq:1.Sigma}%
\end{equation}
for $h:D\rightarrow{\mathbb{R}}$; see \eqref{eq:1.11}.
The functional $\Sigma^{\ast}$ is the
normalization of $\Sigma$ such that $\min\Sigma^{\ast}=0$ by adding a suitable
constant, i.e., $\Sigma^{\ast}(h)=\Sigma(h)-\min\Sigma$. The non-negative
constant $\xi^{\varepsilon}$ is the free energy determined by
\begin{equation}
\xi^{\varepsilon}=\lim_{\ell\rightarrow\infty}\frac{1}{|\Lambda_{\ell}|}%
\log\frac{Z_{\Lambda_{\ell}}^{0,\varepsilon}}{Z_{\Lambda_{\ell}}^{0}},
\label{eq:1.5}%
\end{equation}
where $\Lambda_{\ell}=\{1,2,\ldots,\ell\}^{d}\Subset{\mathbb{Z}}^{d}$,
$|\Lambda_{\ell}|=\ell^{d}$, and $Z_{\Lambda_{\ell}}^{0,\varepsilon}$ and
$Z_{\Lambda_{\ell}}^{0}$ are the partition functions on $\Lambda_{\ell}$ with
$0$-boundary conditions with and without pinning, respectively. 
It is known that $\xi^\varepsilon$ exists, and that the
field is localized by the pinning effect (even if $d=1,2$), meaning that
$\xi^{\varepsilon}>0$ for all $\varepsilon>0$ (and all $d\geq1$); see, e.g.,
Section 7 of \cite{Fu05} or Remark 6.1 of \cite{FSa}.

\subsection{Minimizers of the rate functional} 

The functional $\Sigma$ is defined for
functions $h$ on $D$, which satisfy the (macroscopic) boundary conditions:
\begin{equation}
h(0,\underline{t})=a,\quad h(1,\underline{t})=b. \label{eq:1.6}%
\end{equation}
We denote $t=(t_{1},\underline{t})\in D=[0,1]\times{\mathbb{T}}^{d-1}$. Since
the boundary conditions \eqref{eq:1.6} and the functional $\Sigma$ are
translation-invariant in the variable $\underline{t}$, the minimizers of
$\Sigma$ are functions of $t_{1}$ only and the minimizing problem can be
reduced to the 1D case; see Lemma \ref{lem:1.1} below. Thus the candidates of
the minimizers of $\Sigma$ are of the forms:
\[
\hat{h}(t)=\hat{h}^{(1)}(t_{1}),\quad\bar{h}(t)=\bar{h}^{(1)}(t_{1}),
\]
where $\hat{h}^{(1)}$ and $\bar{h}^{(1)}$ are the candidates of the minimizers
in the one-dimensional problem under the condition $h(0)=a, h(1)=b$, that is,
$\bar{h}^{(1)}(t_1) = (1-t_1)a + t_1b, t_1\in [0,1]$, and, when $a+b<\sqrt{2\xi^\varepsilon}$,
\begin{equation*}
\hat{h}^{(1)}(t_1) = \left\{
\begin{aligned}
(s_1^L-t_1)a/s_1^L, \qquad & t_1\in [0,s_1^L],\\
0, \qquad & t_1\in [s_1^L,s_1^R],\\
(t_1-s_1^R)b/(1-s_1^R), \qquad & t_1\in [s_1^R,1],
\end{aligned}
\right.
\end{equation*}
where $0<s_1^L<s_1^R<1$ are determined by 
$a/s_1^L= b/(1-s_1^R) = \sqrt{2\xi^\varepsilon}$; see Section 3.1 below, 
Section 1.3 and Appendix B of \cite{BFO} or Section 6.4 of \cite{Fu05}.

\begin{lemma}
\label{lem:1.1} The set of the minimizers of the functional $\Sigma$ is
contained in $\{\hat{h},\bar{h}\}$.
\end{lemma}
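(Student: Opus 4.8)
The plan is to reduce the $d$-dimensional minimization problem to the one-dimensional one and then invoke the known description of minimizers in 1D. First I would show that $\Sigma$ restricted to functions satisfying the boundary conditions \eqref{eq:1.6} is minimized only by functions depending on $t_1$ alone. The key tool is the Dirichlet-energy inequality: for fixed $t_1$, writing $\bar h(t_1) = \int_{{\mathbb T}^{d-1}} h(t_1,\underline t)\, d\underline t$ for the average over the torus slice, Jensen's inequality (applied to $x\mapsto x^2$) gives $\big(\partial_{t_1}\bar h(t_1)\big)^2 \le \int_{{\mathbb T}^{d-1}} |\partial_{t_1} h(t_1,\underline t)|^2\, d\underline t$, while the transverse gradient $\nabla_{\underline t} h$ only adds a nonnegative term to $\frac12\int_D |\nabla h|^2$. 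Hence $\frac12\int_D|\nabla h|^2\, dt \ge \frac12\int_0^1 |\partial_{t_1}\bar h(t_1)|^2\, dt_1$, with equality iff $h$ is independent of $\underline t$ (so that $\nabla_{\underline t} h \equiv 0$ and the Jensen step is an equality).

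Next I would handle the pinning term. Replacing $h$ by its slice-average $\bar h$ can only increase the contact set: by Jensen, if $h(t_1,\cdot)\equiv 0$ on a positive-measure set of slices this is inherited, but more carefully one uses that $\{t : h(t)=0\}$ and $\{t_1 : \bar h(t_1) = 0\}$ need to be compared. The clean statement is that among functions with the \emph{same} profile in $t_1$, averaging never decreases $|\{h=0\}|$ only in a weak sense; the honest route is: given any admissible $h$, the function $\tilde h(t) := \bar h(t_1)$ satisfies $\Sigma(\tilde h) \le \Sigma(h)$ because the Dirichlet part strictly decreases unless $h=\tilde h$, and one checks the pinning part does not increase — e.g. by noting $|\{t_1: \bar h(t_1)=0\}| \ge$ (essential infimum bound) or, more robustly, by first establishing that any minimizer is continuous and monotone-type via the Euler–Lagrange equation, forcing the contact set to be an interval in $t_1$ and reducing directly. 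Either way, $\inf_{h}\Sigma(h) = \inf_{g: [0,1]\to{\mathbb R},\, g(0)=a,\,g(1)=b}\Sigma_{1}(g)$ where $\Sigma_1$ is the 1D functional, and any $d$-dimensional minimizer must be of the form $h(t) = g(t_1)$ with $g$ a 1D minimizer.

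Finally, I would quote the one-dimensional analysis (Section 1.3 and Appendix B of \cite{BFO}, or Section 6.4 of \cite{Fu05}, and Section 3.1 below): the 1D functional $\Sigma_1(g) = \frac12\int_0^1 |g'|^2 - \xi^\varepsilon |\{g=0\}|$ over $g(0)=a$, $g(1)=b$ with $a,b>0$ has its minimum attained only at the linear interpolation $\bar h^{(1)}$ or, when $a+b<\sqrt{2\xi^\varepsilon}$, possibly at the "pinned" profile $\hat h^{(1)}$ described above (a convexity/variational computation: off the contact set the minimizer is affine, the contact set is a single interval, and optimizing the two free endpoints $s_1^L, s_1^R$ yields the stated slopes $\sqrt{2\xi^\varepsilon}$). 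Pulling back through $t\mapsto t_1$ gives that the set of minimizers of $\Sigma$ is contained in $\{\hat h, \bar h\}$.

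The main obstacle is the second step: showing that passing to the slice-average (or otherwise reducing to 1D) does not increase the pinning term $-\xi^\varepsilon|\{h=0\}|$, since averaging can in principle destroy zeros (a slice that is zero only on part of ${\mathbb T}^{d-1}$ averages to something nonzero). I expect the correct fix is not a naive averaging argument but rather a regularity-plus-Euler–Lagrange argument showing directly that any minimizer is $\underline t$-independent — exploiting that $\Sigma$ is translation-invariant in $\underline t$ together with strict convexity of the Dirichlet part on the non-contact region, so that a non-constant-in-$\underline t$ minimizer could be strictly improved by averaging after one argues the contact set is, up to null sets, a cylinder over a $t_1$-interval.
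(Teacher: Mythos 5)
Your reduction to one dimension via the slice average $h_{\mathrm{av}}(t_1)=\int_{\mathbb{T}^{d-1}}h(t_1,\underline t)\,d\underline t$ has a genuine gap, and it is exactly the one you flag: the comparison $\Sigma(\tilde h)\le\Sigma(h)$ for $\tilde h(t):=h_{\mathrm{av}}(t_1)$ is false in general. Averaging controls the Dirichlet part (Jensen), but it can destroy the contact set: take $h$ equal to $0$ on half of each torus slice and equal to a small $\delta>0$ on the other half, for $t_1$ in some fixed middle interval; the transverse Dirichlet cost is only $O(\delta^2)$, while after averaging the pinning term $-\xi^{\varepsilon}\left\vert\{h=0\}\right\vert$ loses an amount of order $1$. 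So no "pinning does not increase under averaging" claim can hold, and the fallback you sketch (regularity plus Euler--Lagrange, showing the contact set of a minimizer is a cylinder over a $t_1$-interval) is not carried out and would be substantially more delicate than needed, since the volume term makes the functional non-differentiable and turns this into a free-boundary analysis.

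The missing idea, which is how the paper argues, is to decompose rather than average. By Fubini, $\left\vert\{t\in D;h(t)=0\}\right\vert=\int_{\mathbb{T}^{d-1}}\left\vert\{t_1;h(t_1,\underline t)=0\}\right\vert d\underline t$, and splitting $|\nabla h|^2=|\partial_{t_1}h|^2+|\nabla_{\underline t}h|^2$ gives the exact identity \eqref{eq:1.7}: $\Sigma(h)=\int_{\mathbb{T}^{d-1}}\Sigma^{(1)}(h(\cdot,\underline t))\,d\underline t+\frac12\int_D|\nabla_{\underline t}h|^2\,dt$. Each slice $h(\cdot,\underline t)$ satisfies the one-dimensional boundary conditions, so $\Sigma^{(1)}(h(\cdot,\underline t))\ge\Sigma^{(1)}(\bar h^{(1)})\wedge\Sigma^{(1)}(\hat h^{(1)})$ by the 1D result you quote from \cite{BFO}, \cite{Fu05}; integrating in $\underline t$ yields $\Sigma(h)\ge\Sigma(\bar h)\wedge\Sigma(\hat h)$, and equality forces $\int_D|\nabla_{\underline t}h|^2\,dt=0$, i.e.\ $h$ depends on $t_1$ only, whence $h$ is itself a 1D minimizer and lies in $\{\bar h,\hat h\}$. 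In short: no averaging and no Euler--Lagrange equation are needed, because the pinning term decomposes exactly over slices; the rest of your outline (quoting the 1D classification) then goes through as in the paper.
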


\begin{proof}
Consider the functional
\[
\Sigma^{(1)}(g)=\frac{1}{2}\int_{0}^{1}\dot{g}(t_{1})^{2}dt_{1}-\xi
^{\varepsilon}\left\vert \{t_{1}\in\lbrack0,1];g(t_{1})=0\}\right\vert
\]
for functions $g=g(t_{1})$ with a single variable $t_{1}\in\lbrack0,1]$. Then,
for $h=h(t)\equiv h(t_{1},\underline{t})$, one can rewrite $\Sigma(h)$ as
\begin{equation}
\Sigma(h)=\int_{\mathbb{T}^{d-1}}\Sigma^{(1)}(h(\cdot,\underline
{t}))\,d\underline{t}+\frac{1}{2}\int_{D}|\nabla_{\underline{t}}%
h(t_{1},\underline{t})|^{2}\,dt, \label{eq:1.7}%
\end{equation}
where
\[
\nabla_{\underline{t}}h=\left(  \frac{\partial h}{\partial t_{2}},\ldots
,\frac{\partial h}{\partial t_{d}}\right)  ,\quad\underline{t}=(t_{2}%
,\ldots,t_{d}).
\]
However, since the minimizers of $\Sigma^{(1)}$ are $\hat{h}^{(1)}$ or
$\bar{h}^{(1)}$ (see \cite{BFO}, \cite{Fu05}), we see that
\[
\Sigma^{(1)}(h(\cdot,\underline{t}))\geq\Sigma^{(1)}(\hat{h}^{(1)}%
)\wedge\Sigma^{(1)}(\bar{h}^{(1)}),
\]
and this inequality integrated in $\underline{t}$ combined with \eqref{eq:1.7}
implies
\begin{equation}
\Sigma(h)\geq\Sigma(\hat{h})\wedge\Sigma(\bar{h}) \label{eq:1.8}%
\end{equation}
for all $h=h(t)$. Moreover, from \eqref{eq:1.7} again, the identity holds in
\eqref{eq:1.8} if and only if
\[
\int_{D}|\nabla_{\underline{t}}h(t_{1},t)|^{2}\,dt=0,
\]
which implies that $h$ is a function of $t_{1}$ only.
\end{proof}

\subsection{Main result} 

We are concerned with the critical situation where 
$\Sigma(\hat{h}) = \Sigma(\bar{h})$ holds with $\hat{h}\not =
\bar{h}$, which is equivalent to $\sqrt{a}+\sqrt{b} = (2\xi^{\varepsilon
})^{1/4}$, see Proposition B.1 of \cite{BFO}. Note that this condition implies
$0<s_1^L<s_1^R<1$ for $\hat{h}^{(1)}$.  Otherwise, from
\eqref{eq:1.11} below, $h^{N}$ converges to the unique minimizer of $\Sigma$
($\hat{h}$ in case $\Sigma(\hat{h}) < \Sigma(\bar{h})$ and $\bar{h}$
in case $\Sigma(\bar{h}) < \Sigma(\hat{h})$) as $N\to\infty$ in probability. 
Our main result is 

\begin{theorem} \label{thm:main} 
We assume $\Sigma(\hat{h}) = \Sigma(\bar{h})$.  Then,
if $d\geq3$ and if $\varepsilon>0$ is sufficiently large, we have that
\[
\lim_{N\rightarrow\infty}\mu_{N}^{aN,bN,\varepsilon}\left(  \Vert h^{N}%
-\hat{h}\Vert_{L^{1}(D)}\leq\delta\right)  =1,
\]
for every $\delta>0$.
\end{theorem}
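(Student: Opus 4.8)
The plan is to combine the large deviation principle for $h^N$ — which forces the field to concentrate on the two minimizers $\{\hat h,\bar h\}$ of $\Sigma$ — with sharp \emph{subexponential} asymptotics for the partition function restricted to small $L^1$-neighbourhoods of each candidate, the point being that the pinned profile $\hat h$ carries asymptotically all of the Gibbs mass. First I would invoke the LDP for $h^N$ under $\mu_N^{aN,bN,\varepsilon}$, with speed $N^d$ and good rate functional $\Sigma^\ast$ (established elsewhere in the paper, cf.\ \eqref{eq:1.11}; Lemma \ref{lem:Ma-H-1} allows passing between $h^N$ and its continuous interpolant). Since the minimizers of $\Sigma$ lie in $\{\hat h,\bar h\}$ by Lemma \ref{lem:1.1} and $\Sigma^\ast$ is a good rate function, the LDP upper bound applied to the closed complement of $B(\hat h,\delta')\cup B(\bar h,\delta')$, where $B(g,\delta')=\{h:\|h-g\|_{L^1(D)}\le\delta'\}$, shows $\mu_N^{aN,bN,\varepsilon}(B(\hat h,\delta'))+\mu_N^{aN,bN,\varepsilon}(B(\bar h,\delta'))\to1$; since $\hat h\neq\bar h$ the two balls are disjoint for $\delta'$ small, so it suffices to prove $\mu_N^{aN,bN,\varepsilon}(B(\bar h,\delta'))\to0$ for some small $\delta'\le\delta$, as this forces $\mu_N^{aN,bN,\varepsilon}(\|h^N-\hat h\|_{L^1(D)}\le\delta)\to1$. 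Writing $Z_N[g,\delta']$ for the integral defining $Z_N^{aN,bN,\varepsilon}$ with the extra constraint $h^N\in B(g,\delta')$ and using $Z_N^{aN,bN,\varepsilon}\ge Z_N[\hat h,\delta']$, the theorem reduces to $Z_N[\bar h,\delta']/Z_N[\hat h,\delta']\to0$.

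\textbf{The easy side (upper bound near $\bar h$).} As $\bar h\ge\min(a,b)>0$ on $D$, on $B(\bar h,\delta')$ with $\delta'$ small the field is macroscopically bounded away from $0$, so any site with $\phi_i=0$ costs super-exponentially in $H_N$ and the pinning atoms $\varepsilon\delta_0(d\phi_i)$ contribute only a negligible correction; dropping them and enlarging the domain of integration, $Z_N[\bar h,\delta']\le(1+o(1))Z_N^{aN,bN,0}$. Subtracting the linear (discrete harmonic) boundary profile, whose energy equals $N^d\Sigma(\bar h)=\frac12 N^d(b-a)^2$, gives $Z_N[\bar h,\delta']\le(1+o(1))\,\mathrm e^{-\frac12 N^d(b-a)^2}Z_{D_N}^{0}$, where $Z_{D_N}^{0}$ is the massless Gaussian partition function on $D_N$ with $0$-boundary conditions.

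\textbf{The hard side (lower bound near $\hat h$).} I would restrict the integral for $Z_N[\hat h,\delta']$ to configurations that track $N\hat h$ on the two caps $\{i_1/N\lesssim s_1^L\}$ and $\{i_1/N\gtrsim s_1^R\}$ and look like the infinite-volume pinned field on a slightly shrunken flat slab $F_N=\{i: s_1^L+\eta\le i_1/N\le s_1^R-\eta\}$, activating the pinning on $|F_N|=\Theta(N^d)$ sites there; for $\eta$ small such configurations lie in $B(\hat h,\delta')$ with overwhelming probability. Inserting the free energy $\xi^\varepsilon|F_N|$ from \eqref{eq:1.5} on the slab and the energy of the piecewise-linear interpolation of $\hat h$ on the caps, and using that the critical identity $\Sigma(\hat h)=\Sigma(\bar h)$ together with $s_1^L=a/\sqrt{2\xi^\varepsilon}$ and $1-s_1^R=b/\sqrt{2\xi^\varepsilon}$ makes all volume-order terms cancel, one gets $Z_N[\hat h,\delta']\ge \mathrm e^{-\frac12 N^d(b-a)^2}Z_{D_N}^{0}\cdot R_N$, where $R_N$ collects the surface-order contributions of the pinned slab and of the layers gluing it to the caps. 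The content of the theorem is that $R_N\to\infty$, i.e.\ that opening a pinned region yields a strictly positive surface free energy relative to the straight line; together with the easy side this gives $Z_N[\bar h,\delta']/Z_N[\hat h,\delta']\le(1+o(1))/R_N\to0$.

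\textbf{Where the difficulty lies.} The genuinely hard step is proving $R_N\to\infty$: this needs partition-function asymptotics for the pinned Gaussian field on a slab that are sharp beyond the volume term $\xi^\varepsilon|\Lambda|$ of \eqref{eq:1.5}, together with a quantitative comparison of surface free energies, and this is exactly where the hypotheses enter. For $d\ge3$ the massless Gaussian field has finite variance and summable correlations, so the relevant surface expansions converge and carry no anomalous $\log N$ corrections of the type occurring for $d\le2$ — the one-dimensional case being treated by explicit computation in \cite{BFO}, \cite{FO} — while taking $\varepsilon$ large makes the pinned field on $F_N$ effectively massive (exponentially decaying correlations, contact fraction close to $1$), which both legitimizes a convergent random-walk / cluster-expansion treatment of $Z^{0,\varepsilon}_{F_N}$ and makes the surface term strictly positive, of order $N^{d-1}$, so that $R_N$ diverges. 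A subsidiary technical point — again using tightness of the fluctuations, valid for $d\ge3$ — is to verify that replacing the genuine tube around $N\hat h$ (resp.\ $N\bar h$) by the $L^1$-ball $B(\hat h,\delta')$ (resp.\ $B(\bar h,\delta')$) costs nothing at the scale of $R_N$.
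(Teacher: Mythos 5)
Your overall architecture matches the paper's: a concentration estimate forcing $h^N$ into the union of two $L^1$-balls (the paper's \eqref{eq:1.11}), a surface-order lower bound of size $\mathrm e^{cN^{d-1}}$ for the restricted partition function near $\hat h$ using large $\varepsilon$ (the paper's \eqref{eq:1.9}, proved by pinning a flat slab between two zero layers and cancelling the volume terms via Young's relation and the balance condition), and an upper bound for the restricted partition function near $\bar h$ (the paper's \eqref{eq:1.10}). The hard-side sketch and the role of $d\ge3$ and large $\varepsilon$ are consistent with Sections 4 and 6.

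However, your treatment of the ``easy side'' contains a genuine gap, and it is exactly the point the paper singles out as delicate (see the discussion after \eqref{eq:1.11} in Section 1.5). From $\Vert h^N-\bar h\Vert_{L^1(D)}\le\delta'$ you conclude that the field is ``macroscopically bounded away from $0$'' and that the pinning atoms contribute only a negligible correction. This is false: $L^1$-closeness to $\bar h$ is perfectly compatible with $\phi_i=0$ on a pinned set $A^c$ of mesoscopic, or even small macroscopic, volume, and the Gaussian cost of such a zero set is not super-exponential in $N^d$ — it is $\exp\bigl[-cN^2\,\mathrm{cap}_{D_N}(A^c)\bigr]$, which for a clustered set of $m$ sites is only $\exp\bigl[-cN^2 m^{(d-2)/d}\bigr]$. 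This must beat both the combinatorial entropy $\exp[dm\log N]$ of choosing $A^c$ and the factor $\varepsilon^m$; establishing that requires the capacity lower bound $\mathrm{cap}_{D_N}(A^c)\ge c|A^c|^{(d-2)/d}$ (Lemma \ref{Le_Capacity_bound}, via comparison with the Newtonian capacity and Poincar\'e--Faber--Szeg\"o), and even then the argument only closes for neighbourhoods of $\bar h$ shrinking at a logarithmic rate, $(\log N)^{-\alpha_0}$ with $\alpha_0>d/p$, so that sets with $|A^c|\ge(N/\log N)^d$ are excluded by the $L^p$-constraint; for a fixed radius $\delta'$ the energy--entropy comparison at $|A^c|\asymp\delta' N^d$ is of the same order $N^d$ on both sides and your bound $Z_N[\bar h,\delta']\le(1+o(1))Z_N^{aN,bN,0}$ is not justified. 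So the missing ingredient is precisely the content of Section 5: the expansion over pinned sets $A^c$, the identity \eqref{eq:Z/Z} for $Z_A/Z_N$, and the capacity estimate, together with the matching of radii ($N^{-\alpha}$ for the $\hat h$-ball versus $(\log N)^{-\alpha_0}$ for the $\bar h$-ball) when the three estimates are combined. A secondary caveat: the paper does not prove a full LDP for $\mu_N^{aN,bN,\varepsilon}$; the concentration statement \eqref{eq:1.11} you take as input is itself proved by a substantial coarse-graining and stability argument (Section 6), not quoted from the literature.
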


\begin{remark}
One can even take $\delta=N^{-\alpha}$ with some $\alpha>0$.
\end{remark}

We conjecture that neither the conditions on the dimension $d$, nor the
one on $\varepsilon$ being large, are necessary for the result. For $d=1 $,
the convergence to $\hat{h}$ was proved in [3], [7]. The largeness of
$\varepsilon$ is used here in an essential way to prove the lower bound
(1.11). The other parts of the proof don't use it.  The condition $d\geq3$ is used at a
number of places where it is convenient that the random walk on $\mathbb{Z}%
^{d}$ is transient. We believe, however, that a proof for $d=2$ would only be
technically more involved.

\subsection{Outline of the proof} 

The proof of Theorem
\ref{thm:main} will be completed in the following three steps. In the first
step, we show the following lower bound: For every $\alpha<1$ and $1\leq p\leq2$,
\begin{equation}
\frac{Z_{N}^{aN,bN,\varepsilon}}{Z_{N}^{aN,bN}}\mu_{N}^{aN,bN,\varepsilon
}(\Vert h^{N}-\hat{h}\Vert_{L^{p}(D)}\leq N^{-\alpha})\geq e^{cN^{d-1}}
\label{eq:1.9}%
\end{equation}
with $c=c_{\varepsilon}>0$ for $N\geq N_{0}$ if $\varepsilon>0$ is
sufficiently large, where $Z_{N}^{aN,bN}=Z_{N}^{aN,bN,0}$ (i.e.,
$\varepsilon=0$). The second step establishes an upper bound for the
probability of the event that the surface stays near $\bar h$:
\begin{equation}
\frac{Z_{N}^{aN,bN,\varepsilon}}{Z_{N}^{aN,bN}}\mu_{N}^{aN,bN,\varepsilon
}(\Vert h^{N}-\bar{h}\Vert_{L^{p}(D)}\leq(\log N)^{-\alpha_{0}})\leq2
\label{eq:1.10}%
\end{equation}
with some $\alpha_{0}>0$ and $N\geq N_{0}$. 
In the last step, we prove a large deviation type estimate:
\begin{equation}
\lim_{N\rightarrow\infty}\mu_{N}^{aN,bN,\varepsilon}\left(
\operatorname{dist}_{L^{1}}(h^{N},\{\hat{h},\bar{h}\})\geq N^{-\alpha_{1}%
}\right)  =0 \label{eq:1.11}%
\end{equation}
for some $\alpha_{1}>0$. These three estimates \eqref{eq:1.9}--\eqref{eq:1.11}
conclude the proof of Theorem \ref{thm:main}. In fact, choosing $\alpha$ such
that $0<\alpha<(\alpha_{1}\wedge1)$, \eqref{eq:1.9} together with
\eqref{eq:1.10} implies
\[
\lim_{N\rightarrow\infty}\frac{\mu_{N}^{aN,bN,\varepsilon}(\Vert h^{N}-\hat
{h}\Vert_{L^{1}(D)}\leq N^{-\alpha})}{\mu_{N}^{aN,bN,\varepsilon}(\Vert
h^{N}-\bar{h}\Vert_{L^{1}(D)}\leq N^{-\alpha})}=\infty,
\]
since $N^{-\alpha}\leq(\log N)^{-\alpha_{0}}$ for $N$ large, and at the same
time the sum of the numerator and the denominator converges to $1$ from
\eqref{eq:1.11} since $\alpha<\alpha_{1}$.  

A difficulty is stemming from 
the fact that for $d\geq2$ a statement like \eqref{eq:1.11}
cannot be correct with the $L^{1}$-distance replaced by the $L^{\infty
}$-distance. If \eqref{eq:1.11} would be correct in sup-norm, then $h^{N}$ 
would stay, with large probability, either $L^{\infty}$-close to $\bar{h}$ or $\hat{h}$.
However, if it would stay close to $\bar{h}$ in sup-norm, the field $\phi$
would nowhere be $0$, and therefore \eqref{eq:1.10} would be trivial, with the bound
$1$.

\begin{remark}
An estimate weaker than \eqref{eq:1.9}:
\begin{equation}
\frac{Z_{N}^{aN,bN,\varepsilon}}{Z_{N}^{aN,bN}}\geq\mathrm{e}^{cN^{d-1}}
\label{eq:1.12}%
\end{equation}
is enough to conclude the proof of Theorem \ref{thm:main}. In fact, this
combined with \eqref{eq:1.10} implies that $\mu_{N}^{aN,bN,\varepsilon}(\Vert
h^{N}-\bar{h}\Vert_{L^{p}(D)}\leq(\log N)^{-\alpha_{0}})$ tends to $0$ as
$N\rightarrow\infty$.
\end{remark}

The three estimates \eqref{eq:1.9}, \eqref{eq:1.10} and \eqref{eq:1.11} will
be proved in Sections 4, 5 and 6, respectively. Section 2 gathers some
necessary estimates on the partition functions and Green's functions. 
Section 3 contains an analytic stability result which is important in Section 6. 
The capacity plays a role in Section 5.  The arguments in Section 6
are similar to those in \cite{BI}, but there is an additional complication
here due to the non-zero boundary conditions.  To overcome this,
we introduce fields on an extended set with zero boundary conditions.

\section{Estimates on partition functions and Green's functions}

\subsection{Reduction to $0$-boundary conditions, the case without pinning}

Let $E_{n}=\left\{  1,2,\ldots,n\right\}  \times\mathbb{T}_{N}^{d-1}\subset
D_N^\circ$ for $1\leq n\leq N-1$. For $A\subset D_N^\circ$, we denote
$\partial A=\left\{  i\in D_{N}\backslash A:\left\vert i-j\right\vert
=1\ \mathrm{for\ some\ }j\in A\right\}  $ and $\bar{A}=A\cup\partial A$. For
$A$ such that $E_{n}\subset A$ with some $n\geq1$ and for $\alpha,\beta
\in\mathbb{R}$, the partition function $Z_{A}^{\alpha,\beta}$ without pinning
is defined by%
\begin{equation} \label{eq:2.1-b}
Z_{A}^{\alpha,\beta}=\int_{\mathbb{R}^{A}}\mathrm{e}^{-H_{A}^{\alpha,\beta
}\left(  \phi\right)  }\prod_{i\in A}d\phi_{i},
\end{equation}
where $H_{A}^{\alpha,\beta}\left(  \phi\right)  $ is the Hamiltonian
(\ref{eq:1.1}) with the sum taken over all $\left\langle i,j\right\rangle
\subset\bar{A}$ under the boundary condition%
\begin{equation} \label{eq:2.2-b}
\phi_{i}=\alpha\ \mathrm{for\ }i\in\partial_{L}A,\ \phi_{i}=\beta
\ \mathrm{for\ }i\in\partial_{R}A,
\end{equation}
where $\partial_{L}A=\partial_{L}D_{N}$ and $\partial_{R}A=\partial
A\backslash\partial_{L}A$($=\partial A\cap\left\{  i:i_{1}\geq2\right\}  $).
For general $A\subset D_N^\circ$, we denote $Z_{A}^{0}$ the partition function
without pinning defined by (\ref{eq:2.1-b}) under the boundary condition
$\phi_{i}=0,$ $i\in\partial A$.

\begin{lemma}  \label{lem:1}
{\rm (1)} We have
\[
Z_{E_{n-1}}^{\alpha,\beta}=\mathrm{e}^{-\frac{N^{d-1}}{2n}(\alpha-\beta)^{2}%
}Z_{E_{n-1}}^{0,0}.%
\]
In particular,
\begin{equation} \label{eq:2.3-b}
Z_{N}^{aN,bN}=\mathrm{e}^{-\frac{N^{d}}{2}(a-b)^{2}}Z_{N}^{0,0}.
\end{equation}
{\rm (2)} If $A\supset E_{n-1}$ for some $n\geq2$, we have
\begin{equation}
Z_{A}^{\alpha,\beta,0}\geq\mathrm{e}^{-\frac{N^{d-1}}{2n}\left(  \alpha
-\beta\right)  ^{2}}Z_{A}^{0,0}. \label{eq:3.4}%
\end{equation}
\end{lemma}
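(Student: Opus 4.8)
The plan is to treat both parts by the same mechanism: a deterministic shift of the field that converts an affine boundary condition into the $0$-boundary condition, and then to track exactly how the Hamiltonian and the Lebesgue measure transform under this shift. For part (1), consider the linear function $\psi_i = \alpha + \frac{i_1}{n}(\beta-\alpha)$ on $\overline{E_{n-1}}$ (noting $\overline{E_{n-1}}$ has left boundary at $i_1=0$ and right boundary at $i_1=n$), which satisfies the boundary condition \eqref{eq:2.2-b} and is harmonic for the Hamiltonian on $E_{n-1}$, i.e. it is the minimizer of $H_{E_{n-1}}^{\alpha,\beta}$. Substituting $\phi = \psi + \varphi$ where $\varphi$ vanishes on $\partial E_{n-1}$, and using that $\psi$ is harmonic so the cross term $\sum_{\langle i,j\rangle}(\psi_i-\psi_j)(\varphi_i-\varphi_j)$ vanishes by summation by parts, one gets $H_{E_{n-1}}^{\alpha,\beta}(\psi+\varphi) = H_{E_{n-1}}^{\alpha,\beta}(\psi) + H_{E_{n-1}}^{0,0}(\varphi)$. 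Since $d\phi_i = d\varphi_i$, the change of variables in \eqref{eq:2.1-b} yields $Z_{E_{n-1}}^{\alpha,\beta} = \mathrm{e}^{-H_{E_{n-1}}^{\alpha,\beta}(\psi)}Z_{E_{n-1}}^{0,0}$. It then remains to compute $H_{E_{n-1}}^{\alpha,\beta}(\psi)$ explicitly: the only nonzero gradients are in the $i_1$-direction, each of the $n$ layers of $N^{d-1}$ bonds contributing $\frac12\big(\frac{\beta-\alpha}{n}\big)^2$, for a total of $\frac{N^{d-1}}{2n}(\alpha-\beta)^2$. The special case \eqref{eq:2.3-b} follows by taking $n=N$, $\alpha=aN$, $\beta=bN$.

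For part (2), the idea is to extend the harmonic profile from $E_{n-1}$ to all of $A$ in a way that does not increase the energy too much, or more simply to bound $Z_A^{\alpha,\beta,0}$ from below by restricting to configurations that follow the linear interpolation on $E_{n-1}$. The cleanest route is: take $\psi$ to be the function on $A$ equal to the linear interpolation $\alpha + \frac{i_1}{n}(\beta-\alpha)$ for $i_1 \le n$ and equal to $\beta$ for $i_1 > n$ (this is well-defined since $A \supset E_{n-1}$ and $\partial_R A \subset \{i_1 \ge 2\}$; one should check the profile reaches $\beta$ exactly at $\partial_R E_{n-1}$ and matches the boundary value $\beta$ on $\partial_R A$). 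Writing $\phi = \psi + \varphi$ with $\varphi$ vanishing on $\partial A$, we get $H_A^{\alpha,\beta}(\psi+\varphi) = H_A^{\alpha,\beta}(\psi) + H_A^{0,0}(\varphi) + (\text{cross term})$; here the cross term need not vanish since $\psi$ is not harmonic on all of $A$, but it can be absorbed. Actually the simplest honest argument avoids the cross term: use that $H_A^{\alpha,\beta}(\phi) \le H_A^{0,0}(\phi - \psi) + H_A^{\alpha,\beta}(\psi) + \text{cross}$, and observe that since $\psi$ is harmonic on $E_{n-1}$ and locally constant (hence harmonic) on $A \setminus \overline{E_{n-1}}$, the cross term reduces to boundary terms on $\partial E_n \cap A$ which, after summation by parts, one argues is nonnegative — or, reversing the roles, one shows $H_A^{\alpha,\beta}(\psi+\varphi) \le H_A^{0,0}(\varphi) + \frac{N^{d-1}}{2n}(\alpha-\beta)^2$. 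Then $Z_A^{\alpha,\beta,0} = \int \mathrm{e}^{-H_A^{\alpha,\beta}(\psi+\varphi)}\prod d\varphi_i \ge \mathrm{e}^{-\frac{N^{d-1}}{2n}(\alpha-\beta)^2} \int \mathrm{e}^{-H_A^{0,0}(\varphi)}\prod d\varphi_i = \mathrm{e}^{-\frac{N^{d-1}}{2n}(\alpha-\beta)^2} Z_A^{0,0}$, which is \eqref{eq:3.4}.

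The main obstacle is the sign of the cross term in part (2): because $\psi$ is harmonic only on $E_{n-1}$ and not on all of $A$, the summation-by-parts identity leaves a residual boundary contribution along the interface $\{i_1 = n\} \cap A$ (where the profile transitions from linear to constant), and one must verify this contribution has the right sign — i.e. that inserting the "kink" at $i_1 = n$ only costs the stated energy and the interaction between this deterministic kink and the fluctuation field $\varphi$ integrates to something nonnegative (or can be dominated). This is the one place where one genuinely uses $A \supset E_{n-1}$ rather than just $A \supset E_1$: it guarantees the harmonic region is thick enough that the transition layer sits strictly inside $A$ and the relevant bonds are accounted for. Everything else is a routine Gaussian change of variables.
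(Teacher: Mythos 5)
Part (1) of your proposal is correct and is essentially the paper's argument: shift by the linear (harmonic) profile, note the cross term vanishes by summation by parts since $\varphi=0$ on $\partial E_{n-1}$, and compute the energy of the linear profile; the special case follows with $n=N$, $\alpha=aN$, $\beta=bN$.

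Part (2) is where there is a genuine gap, exactly at the point you flag. With your kinked test profile $\psi$ (linear up to $i_1=n$, constant $\beta$ beyond), the cross term in $H_A^{\alpha,\beta}(\psi+\varphi)=H_A(\psi)+H_A^{0,0}(\varphi)+L(\varphi)$ is, after summation by parts, $L(\varphi)=-\tfrac{\alpha-\beta}{n}\sum_{i\in A:\,i_1=n}\varphi_i$, a \emph{sign-indefinite linear functional} of $\varphi$. So the pointwise inequality $H_A^{\alpha,\beta}(\psi+\varphi)\le H_A^{0,0}(\varphi)+\tfrac{N^{d-1}}{2n}(\alpha-\beta)^2$ that your final display implicitly uses is false, and no argument that "the residual boundary contribution has the right sign" can work. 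The fix is easy but must be stated: either (i) keep your decomposition and integrate first — under the centered Gaussian $\mu_A^{0,0}$ the functional $L$ has mean zero, so by Jensen (or by completing the square) $\int \mathrm{e}^{-H_A^{0,0}(\varphi)-L(\varphi)}\prod_i d\varphi_i\ge Z_A^{0,0}$, which yields \eqref{eq:3.4}; or (ii) shift instead by the \emph{true} harmonic extension $\bar\phi^{A}$ of the boundary data on $A$, for which the cross term vanishes identically, giving the exact identity $Z_A^{\alpha,\beta}=\mathrm{e}^{-H_A(\bar\phi^{A})}Z_A^{0,0}$, and then use your test function only through the variational bound $H_A(\bar\phi^{A})\le H_A(\psi)=\tfrac{N^{d-1}}{2n}(\alpha-\beta)^2$. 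The paper follows the second structure: it derives the exact identity via summation by parts (the boundary term there is $H_A(\bar\phi^{A})$) and then bounds it not by a test function but by the maximum/comparison principle — after reducing to $\beta=0$, it shows $\bar\phi^{A}\ge\bar\phi^{E_{n-1}}$ on $\bar E_{n-1}$, so each bond at $\partial_L A$ contributes at most $\alpha^2/n$ to the boundary term. Your test-function bound is a legitimate alternative to that comparison argument (and also makes transparent where $A\supset E_{n-1}$ is needed, namely that $\partial_R A\subset\{i_1\ge n\}$ so $\psi$ matches the boundary data), but as written the proof of (2) is incomplete until the cross term is handled by one of the two routes above.
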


\begin{proof}
We first recall the summation by parts formula for the Hamiltonian
$H_{A}^{\psi}(\phi)$ for $A\subset D_{N}^{\circ}$ with the general boundary
condition $\psi=(\psi_{i})_{i\in\partial A}$:
\[
H_{A}^{\psi}(\phi)=-\frac{1}{2}\left(  (\phi-\bar{\phi}^{A,\psi}),\Delta
_{A}(\phi-\bar{\phi}^{A,\psi})\right)  _{A}+\left(  \operatorname*{BT}\right)
,
\]
where $(\phi^{1},\phi^{2})_{A}=\sum_{i\in A}\phi_{i}^{1}\phi_{i}^{2}$ stands
for the inner product of $\phi^{1},\phi^{2}\in\mathbb{R}^{A}$, $\Delta
_{A}\equiv\Delta$ is the discrete Laplacian on $A$, $\bar{\phi}=\bar{\phi
}^{A,\psi}$ is the solution of the Laplace equation:
\begin{equation}
\left\{
\begin{array}
[c]{cc}%
\left(  \Delta\bar{\phi}\right)  _{i}=0 & i\in A\\
\bar{\phi}_{i}=\psi_{i} & i\in\partial A
\end{array}
\right.  \label{eq:3.5}%
\end{equation}
and the boundary term (BT) is given by
\[
\left(  \operatorname*{BT}\right)  =\frac{1}{2}\sum_{i\in A,j\in\partial
A:|i-j|=1}\psi_{j}\{\psi_{j}-\bar{\phi}_{i}^{A,\psi}\},
\]
see the proof of Proposition 3.1 of \cite{Fu05} (which is stated only for
$A\Subset\mathbb{Z}^{d}$, but the same holds for $A\subset D_N^\circ$).

When $A=E_{n-1}$ and the boundary condition $\psi$ is given as in
\eqref{eq:2.2-b}, the Laplace equation \eqref{eq:3.5} has an explicit solution
$\bar{\phi}=\bar{\phi}^{E_{n-1},\psi}$:
\begin{equation}
\bar{\phi}_{i}=\frac{1}{n}\left(  \beta i_{1}+\alpha(n-i_{1})\right)  ,\quad
i\in\bar{E}_{n-1}. \label{eq:3.6}%
\end{equation}
Thus, in this case, the boundary term is given by
\[
\left(  \operatorname*{BT}\right)  =\frac{N^{d-1}}{2n}\left(  \alpha
-\beta\right)  ^{2},
\]
which shows the first assertion in (1). In particular, \eqref{eq:2.3-b} follows
by noting that $Z_{N}^{aN,bN}=Z_{E_{N-1}}^{aN,bN}$.

To prove (2), we may assume $\alpha>0$ by symmetry. Let $\bar{\phi}^{A}$ be
the solution of the Laplace equation \eqref{eq:3.5} on $A$ with $\psi$ given
by \eqref{eq:2.2-b} and set $\bar{\phi}^{n-1}:=\bar{\phi}^{E_{n-1}}$. Then, we
have
\begin{equation}
\bar{\phi}_{i}^{A}\geq\bar{\phi}_{i}^{n-1}\quad\text{ for all }\;i\in\bar
{E}_{n-1}. \label{eq:3.7}%
\end{equation}
Indeed, since $\alpha>0$, the maximum principle implies that $\bar{\phi}%
^{A}\geq0$ on $\partial_{R}E_{n-1}$ and, in particular, two harmonic functions
$\bar{\phi}^{A}$ and $\bar{\phi}^{n-1}$ on $E_{n-1}$ satisfy $\bar{\phi}%
^{A}\geq\bar{\phi}^{n-1}$ on $\partial E_{n-1}$. Therefore, by the comparison
principle, we obtain \eqref{eq:3.7}.

Consider now the boundary term (BT) of $H_{A}^{\alpha,0}(\phi)$. Then, the
contribution from the pair $\left\langle i,j\right\rangle $ such that
$j\in\partial_{R}A$ vanishes, since $\psi_{j}=0$ for such $j$. On the other
hand, for $i\in A,j\in\partial_{L}A$ such that $|i-j|=1$, we see from
\eqref{eq:3.7} and then by \eqref{eq:3.6},
\[
\psi_{j}\{\psi_{j}-\bar{\phi}_{i}^{A,\psi}\}\leq\alpha\{\alpha-\bar{\phi}%
_{i}^{n-1}\}=\frac{1}{n}\alpha^{2}.
\]
This completes the proof of (2).
\end{proof}

\begin{remark}
If $A\subset E_{n-1}$, one can similarly show an upper bound on
$Z_{A}^{\alpha,0}$ (i.e.\ an inequality opposite to \eqref{eq:3.4}), but this
will not be used.
\end{remark}

\subsection{Estimates on the partition functions with $0$-boundary conditions
without pinning}

In the subsequent part of Section 2, we will only consider the partition
functions under the $0$-boundary conditions. The superscripts \lq\lq$RW^{d,N}%
$" and \lq\lq$RW^{d}$" refer to simple random walks $\{\eta_{n}%
\}_{n=0,1,2,\ldots}$ on ${\mathbb{Z}}\times{\mathbb{T}}_{N}^{d-1}$ and
${\mathbb{Z}}^{d}$, respectively, and $k$ in $P_{k}^{RW^{d}}$ or
$P_{k}^{RW^{d,N}}$ refers to the starting point of the random walk. We
introduce three quantities:
\begin{align*}
&  q = \sum_{n=1}^{\infty}\frac1{2n} P_{0}^{RW^{d}}(\eta_{2n}=0),\\
&  q^{N} = \sum_{n=1}^{\infty}\frac1{2n} P_{0}^{RW^{d,N}}(\eta_{2n}=0),\\
&  r = \sum_{n=1}^{\infty}\frac1{2n} E_{0}^{RW^{d}} \left[  \max_{1\le m
\le2n} |\eta_{m}| \cdot1_{\{\eta_{2n}=0\}} \right]  .
\end{align*}
Note that $q<\infty$ for all $d\ge1$ and $r<\infty$ for $d\ge2$ (the case that
$d\ge3$ is easy, while the case that $d=2$ is discussed in \cite{BI}, p.543).
Indeed, if $d\ge3$, $r<\bar c=G(0,0)$, the Green's function defined below
in Sections \ref{section:2.3} and \ref{section:2.4}.

The next lemma, in particular its assertion (1), is shown similarly to
the proof of Proposition 4.2.2 or Lemma 2.3.1-a) in \cite{BI},
only keeping in mind the fact that our random walk \lq\lq$RW^{d,N}$" is
periodic in the second to the $d$th components.

\begin{lemma}
\label{lem:2}
{\rm (1)} Assume that $d\geq2$ and $N$ is even, and let $A\subset
D_{N}^{\circ}$. Then, we have that
\[
\frac{1}{2}\left(  \log\frac{\pi}{d}+q^{N}\right)  |A|-r\max_{n=1,2,\ldots
}|\partial A_{n}|\leq\log Z_{A}^{0}\leq\frac{1}{2}\left(  \log\frac{\pi}%
{d}+q^{N}\right)  |A|,
\]
where $|A|=\sharp\{i\in A\}$ is the number of points in $A$ and $A_{n}=\{i\in
A;\min\limits_{j\in D_{N}\setminus A}|i-j|\geq n\}$.\\
{\rm (2)} We have the estimate
\[
0\leq q^{N}-q\leq CN^{-d},
\]
with some $C>0$ for every $d\geq2$.
\end{lemma}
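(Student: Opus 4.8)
The plan is to follow the random-walk representation of the Gaussian partition function, as in Propositions 4.2.2 and 2.3.1 of \cite{BI}, keeping track of the periodic directions. Write $Z_A^0 = (2\pi)^{|A|/2}(\det(-\Delta_A))^{-1/2}$ up to a fixed normalization absorbed in the constant $\log(\pi/d)$; more precisely, using the standard formula $\int_{\mathbb{R}^A} e^{-\frac12(\phi,(-\Delta_A)\phi)}\,d\phi = (2\pi)^{|A|/2}(\det(-\Delta_A))^{-1/2}$ and the fact that the Hamiltonian $H_A^0$ with $0$-boundary conditions equals $\tfrac12(\phi,(-\Delta_A)\phi)$. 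Hence $\log Z_A^0 = \tfrac{|A|}{2}\log(2\pi) - \tfrac12\log\det(-\Delta_A)$. The key step is to express $\log\det(-\Delta_A)$ via the loop expansion: writing $-\Delta_A = 2d(I - P_A)$ where $P_A$ is the transition operator of the simple random walk on $\mathbb{Z}\times\mathbb{T}_N^{d-1}$ killed on exiting $A$, one gets
\[
-\log\det(-\Delta_A) = -|A|\log(2d) - \operatorname{tr}\log(I-P_A) = -|A|\log(2d) + \sum_{n\ge1}\frac1n \operatorname{tr}(P_A^n).
\]
Since $\operatorname{tr}(P_A^n) = \sum_{i\in A} P_i^{RW^{d,N}}(\eta_n = i,\ \tau_A > n)$ and the walk is bipartite, only even $n$ contribute; the diagonal term at site $i$ is $P_0^{RW^{d,N}}(\eta_{2n}=0,\ \tau_A>2n)$ after translation. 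Collecting the $n$-independent normalization gives the constant $\tfrac12(\log(\pi/d) + q^N)$ as the "bulk" value when the killing is ignored, i.e. when $P_i^{RW^{d,N}}(\eta_{2n}=0,\tau_A>2n)$ is replaced by $P_0^{RW^{d,N}}(\eta_{2n}=0)$; this replacement is exactly the content of assertion (1).

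For the upper bound in (1), simply drop the killing: $P_i^{RW^{d,N}}(\eta_{2n}=i,\tau_A>2n)\le P_0^{RW^{d,N}}(\eta_{2n}=0)$, so $\sum_n\frac1{2n}\operatorname{tr}(P_A^{2n})\le |A| q^N$, giving $\log Z_A^0\le \tfrac12(\log(\pi/d)+q^N)|A|$. For the lower bound, estimate the \emph{error} from dropping the killing: for $i\in A_n$ (at distance $\ge n$ from the complement), a loop of length $2m\le 2n$ based at $i$ cannot exit $A$, so
\[
P_0^{RW^{d,N}}(\eta_{2n}=0) - P_i^{RW^{d,N}}(\eta_{2n}=i,\tau_A>2n)\le P_0^{RW^{d,N}}\big(\eta_{2n}=0,\ \max_{1\le m\le 2n}|\eta_m|\ge \operatorname{dist}(i,\partial A)\big),
\]
and summing $\tfrac1{2n}$ times this over $n$ and over $i\in A$, bounding $\operatorname{dist}(i,\partial A)$ below and using Markov's inequality against the quantity $r = \sum_n\frac1{2n}E_0[\max_{m\le 2n}|\eta_m|1_{\{\eta_{2n}=0\}}]$, yields an error at most $r\max_n|\partial A_n|$ (the sum over $i$ of the indicator that $i$ lies within distance $n$ of $\partial A$ is controlled by $\sum_n |\partial A_n|$, rearranged into $\max_n|\partial A_n|$ times the tail weight). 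This is the step that needs the most care: matching the combinatorial bookkeeping of "how many sites are within distance $n$ of the boundary" to the definition of $A_n$ and of $r$, exactly as in Lemma 2.3.1-a) of \cite{BI}; the periodicity in the last $d-1$ coordinates changes nothing since distances and the walk's increments are unaffected.

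For assertion (2), compare the heat kernels of the walk on $\mathbb{Z}^d$ and on $\mathbb{Z}\times\mathbb{T}_N^{d-1}$. By the image (wrapping) formula, $P_0^{RW^{d,N}}(\eta_{2n}=0) = \sum_{\underline k\in\mathbb{Z}^{d-1}} P_0^{RW^d}(\eta_{2n}=(0,N\underline k))\ge P_0^{RW^d}(\eta_{2n}=0)$, giving $q^N\ge q$ and the sign of the lower bound. For the upper bound, $q^N - q = \sum_{n\ge1}\frac1{2n}\sum_{\underline k\ne 0}P_0^{RW^d}(\eta_{2n}=(0,N\underline k))$; the local CLT gives $P_0^{RW^d}(\eta_{2n}=(0,N\underline k))\le C n^{-d/2}e^{-cN^2|\underline k|^2/n}$ for $|N\underline k|\le \sqrt n$ and exponential decay beyond, and summing first over $n$ (the factor $\frac1{2n}n^{-d/2}$ is summable for $d\ge2$ once $n\gtrsim N^2$, while for $n\lesssim N^2$ the Gaussian factor is tiny) and then over $\underline k$ produces $q^N-q\le CN^{-d}$. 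I expect the main obstacle to be the lower-bound error estimate in (1) — lining up the definitions of $A_n$, $|\partial A_n|$ and $r$ so the telescoping/rearrangement is clean — rather than (2), which is a routine local-CLT comparison.
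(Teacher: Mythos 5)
Your proposal follows essentially the same route as the paper: the random-walk (loop/trace) representation $\log Z_A^0=\tfrac12\bigl(|A|\log\tfrac{\pi}{d}+\sum_{k\in A}\sum_n\tfrac1{2n}P_k^{RW^{d,N}}(\eta_{2n}=k,\tau_A>2n)\bigr)$ from \cite{BI}, the upper bound by dropping the killing, the lower bound by layering the error according to the distance to $\partial A$ and comparing with $r$, and for (2) the wrapping formula plus an Aronson/local-CLT Gaussian bound summed over $n$ and $\underline{k}$. The only place you gloss over is exactly where the paper inserts its one extra step, namely transferring the cylinder-walk error $P_k^{RW^{d,N}}(\eta_{2n}=k,\tau_A\leq 2n)$ to the $\mathbb{Z}^d$ quantity entering $r$ (done there via the periodic extension $\tilde A$ and the boxes $S_t$), after which both arguments defer the remaining bookkeeping to Lemma 2.3.1-a) of \cite{BI}.
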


\begin{proof}
We recall the random walk representation for the partition function $Z_{A}%
^{0}$ from \cite{BI}, (4.1.1) and (4.1.3) noting that $\Delta_A=2d(P_A-I)$
in our setting:
\begin{equation}
\log Z_{A}^{0}=\frac{1}{2}\left(  |A|\log\frac{\pi}{d}+I\right)  ,
\label{eq:3.8}%
\end{equation}
where
\begin{equation}
I=\sum_{k\in A}\sum_{n=1}^{\infty}\frac{1}{2n}P_{k}^{RW^{d,N}}(\eta
_{2n}=k,{\tau_{A}}>2n) \label{eq:3.9}%
\end{equation}
and ${\tau_{A}}$ is the first exit time of $\eta$ from $A$; note that, since $N$
is even, $P_{k}^{RW^{d,N}}(\eta_{2n-1}=k)=0$. The upper bound for $\log
Z_{A}^{0}$ in (1) is immediate by dropping the event $\{{\tau_{A}}>2n\}$ from the
probability. To show the lower bound, we follow the calculations subsequent to
(4.2.8) in the proof of Proposition 4.2.2 of \cite{BI}:
\[
I=q^{N}|A|-\sum_{t=1}^{N-1}\sum_{k\in\partial A_{t}}\sum_{n=1}^{\infty}%
\frac{1}{2n}P_{k}^{RW^{d,N}}(\eta_{2n}=k,{\tau_{A}}\leq2n),
\]
note that $\partial A_{t}=\emptyset$ for $t\geq N$. Let $\tilde{A}%
\subset\mathbb{Z}^{d}$ be the periodic extension of $A$ in the second to the
$d$th coordinates. Then, since ${\tau_{A}}$ under $RW^{d,N}$ is the same as
${\tau_{{\tilde{A}}}}$ under $RW^{d}$ and ${\tau_{{\tilde{A}}}}\geq{\tau_{{k+S_{t}}}}$ for
$k\in\partial A_{t}$, we have
\[
P_{k}^{RW^{d,N}}(\eta_{2n}=k,{\tau_{A}}\leq2n)\leq P_{0}^{RW^{d}}(\eta
_{2n}=0,{\tau_{{S_{t}}}}\leq2n),
\]
where $S_{t}=[-t,t]^{d}\cap\mathbb{Z}^{d}$ is a box in $\mathbb{Z}^{d}$. The
rest is the same as in \cite{BI}.

We finally show the assertion (2). In the representation
\[
q^{N}-q=\sum_{n=1}^{\infty}\frac{1}{2n}P_{0}^{RW^{d}}\big(\eta_{2n}%
\in\{0\}\times(N\mathbb{Z}^{d-1}\setminus\{0\})\big),
\]
by applying the Aronson's type estimate for the random walk on $\mathbb{Z}^{d}
$:
\[
P_{0}^{RW^{d}}(\eta_{2n}=k)\leq\frac{C_{1}}{n^{d/2}}\mathrm{e}^{-|k|^{2}%
/C_{1}n},\quad k\in\mathbb{Z}^{d},
\]
with some $C_{1}>0$, we obtain that
\[
0\leq q^{N}-q\leq\frac{C_{1}}{2}\sum_{n=1}^{\infty}\frac{1}{n^{(d+2)/2}}%
\sum_{\underline{\ell}\in\mathbb{Z}^{d-1}\setminus\{0\}}\mathrm{e}%
^{-N^{2}|\underline{\ell}|^{2}/C_{1}n}.
\]
However, the last sum in $\underline{\ell}$ can be bounded by
\[
C_{2}\left(  1+\frac{\sqrt{n}}{N}\right)  \mathrm{e}^{-N^{2}/C_{2}n}%
\]
with some $C_{2}>0$. Indeed, the sum over $\{\underline{\ell}:1\leq
|\underline{\ell}|\leq10\}$ is bounded by $\sharp\{\underline{\ell}%
:1\leq|\underline{\ell}|\leq10\}\times\mathrm{e}^{-N^{2}/C_{1}n}$, while the
sum over $\{\underline{\ell}:|\underline{\ell}|\geq11\}$ can be bounded by the
integral:
\[
C_{3}\int_{\{x\in\mathbb{R}^{d-1}:|x|\geq10\}}\mathrm{e}^{-N^{2}|x|^{2}%
/C_{1}n}\,dx
\]
with some $C_{3}>0$ and this proves the above statement. Thus, we have
\[
0\leq q^{N}-q\leq\frac{C_{1}C_{2}}{2}\sum_{n=1}^{\infty}\frac{1}{n^{(d+2)/2}%
}\left(  1+\frac{\sqrt{n}}{N}\right)  \mathrm{e}^{-N^{2}/C_{2}n}.
\]
Again, estimating the sum in the right hand side by the integral:
\[
C_{4}\int_{1}^{\infty}\frac{1}{t^{(d+2)/2}}\left(  1+\frac{\sqrt{t}}%
{N}\right)  \mathrm{e}^{-N^{2}/C_{2}t}\,dt,
\]
with some $C_{4}>0$ and then changing the variables: $t=N^{2}/u$ in the
integral, the conclusion of (2) follows immediately.
\end{proof}

\subsection{Estimates on the Green's functions}  \label{section:2.3}

Let $G_{N}(i,j),i,j\in D_{N}$ be the Green's function on $D_{N}$ with Dirichlet
boundary condition at $\partial D_{N}$:
\begin{equation}
G_{N}(i,j)=\sum_{n=0}^{\infty}P_{i}(\eta_{n}=j,n<\sigma)\left(  =E_{i}^{RW^{d,N}}\left[
\sum_{n=0}^{\infty}1_{\{\eta_{n}=j,n<\sigma\}}\right]  \right)  ,
\label{eq:G-1}%
\end{equation}
where $\eta_{n}$ is the random walk on $D_{N}$ (or on $\mathbb{Z}\times
\mathbb{T}_N^{d-1}$) and 
$$
\sigma=\inf\{n\geq 0;\eta_{n}\in\partial D_{N}\}. 
$$
Let $\tilde{G}_{N}(i,j),i,j\in\tilde{D}%
_{N}:=\{0,1,2,\ldots,N\}\times{\mathbb{Z}}^{d-1}$ be the Green's function on
$\tilde{D}_{N}$ with Dirichlet boundary condition at $\partial\tilde{D}%
_{N}=\{0,N\}\times{\mathbb{Z}}^{d-1}$, which has a similar expression to
\eqref{eq:G-1} with the random walk $\tilde{\eta}_{n}$ on $\tilde{D}_{N}$ and
its hitting time $\tilde{\sigma}$ to $\partial\tilde{D}_{N}$. For $i$ or
$j\notin D_N^\circ:=D_{N}\backslash\partial D_{N}$, we put $G_{N}\left(
i,j\right)  :=0$, and similarly for $\tilde{G}_{N}.$ We also denote the Green's
function of the random walk on the whole lattice ${\mathbb{Z}}^{d}$ by
$G(i,j),i,j\in{\mathbb{Z}}^{d}$, which exists because we assume $d\geq3$.

Then, we easily see that
\begin{equation}
G_{N}(i,j)=\sum_{k\in{\mathbb{Z}}^{d-1}}\tilde{G}_{N}(i,j+kN),\quad i,j\in
D_{N},\label{eq:G-2}%
\end{equation}
where $D_{N}$ is naturally embedded in $\tilde{D}_{N}$ and $kN$ is identified
with $(0,kN)\in{\mathbb{Z}}^{d}$. In fact, the sum in the right hand side of
\eqref{eq:G-2} does not depend on the choice of $j\in\tilde{D}_{N}$, in the
equivalent class to the original $j\in D_{N}$ in modulo $N$ in the second to
$n$th components.

The function $\tilde G_{N}$ has the following estimates. For $e$ with $|e|=1$,
we denote $\nabla_{j,e}\tilde G_{N}(i,j)= \tilde G_{N}(i,j+e)-\tilde
G_{N}(i,j)$ and similar for $\nabla_{j,e} G_{N}(i,j)$.

\begin{lemma}
\label{lem:G-1}
{\rm (1)} For $i,j\in\tilde{D}_{N}$, we have%
\begin{equation} \label{eq:G-3}
|\nabla_{j,e}\tilde{G}_{N}(i,j)|\leq\frac{C}{1+|i-j|^{d-1}}+E_{i}\left[
\frac{C}{1+|\tilde{\eta}_{\tilde{\sigma}}-j|^{d-1}}\right]  
\end{equation}
with some $C>0.$ \\
{\rm (2)} With the natural embedding of $D_{N}\subset\tilde{D}_{N}$, we have
\[
\sup_{i\in\tilde{D}_{N}}\sum_{j\in kN+D_{N}}|\nabla_{j,e}\tilde{G}%
_{N}(i,j)|\leq CN.
\]
{\rm (3)} We have 
\begin{align} 
\tilde G_N(i,j) \le \frac{C}{N^{d-2}} e^{-c|i-j|/N}, \quad \text{ if } \;
  |i-j|\ge 5N,  \label{eq:G-4}
\end{align}
with some $C, c>0$.
\end{lemma}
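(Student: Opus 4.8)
The plan is to derive all three estimates from standard random-walk Green's function bounds on $\mathbb{Z}^d$ (valid since $d\ge 3$), combined with the representation of $\tilde G_N$ as a difference between the free Green's function $G$ and a harmonic correction coming from the absorbing boundary $\partial\tilde D_N$. Concretely, I would write
\[
\tilde G_N(i,j) = G(i,j) - E_i\big[ G(\tilde\eta_{\tilde\sigma}, j) \big],
\]
where $\tilde\sigma$ is the exit time from $\tilde D_N$ and $\tilde\eta_{\tilde\sigma}\in\{0,N\}\times\mathbb{Z}^{d-1}$; this is the usual ``killed walk = free walk minus walk restarted at the boundary'' identity. The key inputs are the classical bounds $G(i,j)\le C(1+|i-j|)^{-(d-2)}$ and $|\nabla_{j,e} G(i,j)| = |G(i,j+e)-G(i,j)| \le C(1+|i-j|)^{-(d-1)}$ for the simple random walk on $\mathbb{Z}^d$, $d\ge3$ (see e.g. Lawler's book).

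For (1): applying $\nabla_{j,e}$ to the displayed identity moves the gradient onto $G$ in both terms (the harmonic measure $P_i(\tilde\eta_{\tilde\sigma}\in\cdot)$ does not depend on $j$), giving
\[
|\nabla_{j,e}\tilde G_N(i,j)| \le |\nabla_{j,e} G(i,j)| + E_i\big[ |\nabla_{j,e} G(\tilde\eta_{\tilde\sigma},j)| \big],
\]
and each term is bounded by $C(1+|\cdot-j|)^{-(d-1)}$, which is exactly \eqref{eq:G-3}. For (2): sum the bound from (1) over $j\in kN+D_N$. The first term contributes $\sum_j C(1+|i-j|)^{-(d-1)}$; since $d-1\ge 2$, this sum over a slab of width $N+1$ in the first coordinate (and all of $\mathbb{Z}^{d-1}$ transversally) is comparable to $\int_0^N (1+s)^{-(d-1)} s^{d-2}\,ds \le C\log N$ at worst, but in fact integrating $(1+r)^{-(d-1)}$ against the $(d-1)$-dimensional surface measure of the transversal slices gives a bound $CN$ (one free coordinate of range $N$, the transversal directions summable). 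The second, expectation term is handled identically after conditioning on $\tilde\eta_{\tilde\sigma}$, since the same slab-sum bound applies with $i$ replaced by the boundary point. For (3): when $|i-j|\ge 5N$, note that any path from $i$ to $j$ inside $\tilde D_N$ must cross $\Theta(|i-j|/N)$ consecutive hyperplanes $\{t_1 = m\}$ for $m$ in between, wait — more cleanly, $i$ and $j$ lie in $\{0,\dots,N\}\times\mathbb{Z}^{d-1}$, so $|i-j|\ge 5N$ forces $|{\underline i}-{\underline j}|\ge 4N$ in the transversal coordinates; then use that $\tilde G_N(i,j)\le G(i,j)\le C(1+|i-j|)^{-(d-2)} \le C N^{-(d-2)} (|i-j|/N)^{-(d-2)}$, and since $(|i-j|/N)^{-(d-2)}\le C e^{-c|i-j|/N}$ is false for large argument — so instead I would get genuine exponential decay from the boundary: the walk from $i$ that reaches $j$ without exiting must travel transversal distance $\ge 4N$ while staying in a slab of width $N$, and a reflection/last-exit decomposition shows the probability of such an excursion decays like $e^{-c|{\underline i}-{\underline j}|/N}$, hence like $e^{-c|i-j|/N}$; combining with the $N^{-(d-2)}$ prefactor from the local CLT at the appropriate scale yields \eqref{eq:G-4}.

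The main obstacle is the exponential bound in (3): the polynomial Green's function estimate alone does not give exponential decay, so one genuinely needs to exploit that the walk is confined to the thin slab $\{0,\dots,N\}\times\mathbb{Z}^{d-1}$ by the absorbing boundary. I expect to argue via a chaining/renewal estimate: partition the transversal displacement into $\lfloor |{\underline i}-{\underline j}|/N\rfloor$ stretches of length $N$; on each stretch the confined walk has probability bounded away from $1$ of not being absorbed (by comparison with the one-dimensional killed walk on $\{0,\dots,N\}$, whose survival over time $\gtrsim N^2$ is $O(1/N)$ but over a single $N$-length transversal stretch is bounded by a constant $<1$ after optimizing), giving a product $\rho^{|{\underline i}-{\underline j}|/N}$ with $\rho<1$, i.e. $e^{-c|i-j|/N}$; the $N^{-(d-2)}$ prefactor comes from the first such stretch via the $\mathbb{Z}^d$ local limit theorem. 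The bookkeeping of constants uniformly in $N$ and in the starting/ending points is the delicate part, but it is routine in the style of \cite{BI}.
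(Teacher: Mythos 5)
Your proposal follows essentially the same route as the paper: part (1) via the identity $\tilde G_N(i,j)=G(i,j)-E_i[G(\tilde\eta_{\tilde\sigma},j)]$ together with the standard gradient estimate for $G$ on $\mathbb{Z}^d$, part (2) by summing that bound over the box of side $N$, and part (3) by exploiting confinement in the slab, with a probability bounded away from $1$ of surviving each transversal stretch of length $N$ (the paper's geometric-distribution bound) multiplied by a Green's-function prefactor. The only slip is where the $N^{-(d-2)}$ is harvested in (3): it should come from the end of the journey -- stop the walk when its transversal part first enters a box of side $3N$ around $j$ and use $\tilde G_N\le G\le CN^{-(d-2)}$ there, legitimate because the stopped point is still at distance $\ge N$ from $j$ -- rather than ``from the first stretch''; with that strong Markov decomposition your chaining argument is exactly the paper's.
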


\begin{proof}
To show \eqref{eq:G-3}, we rewrite $\tilde{G}_{N}(i,j)$ with the random walk
$\tilde{\eta}_{n}$ on ${\mathbb{Z}}^{d}$ and its hitting time $\tilde{\sigma}$
to $\partial\tilde{D}_{N}$ as
\begin{align}
\tilde{G}_{N}(i,j)  &  =\sum_{n=0}^{\infty}P_{i}(\tilde{\eta}_{n}%
=j,n<\tilde{\sigma})\label{eq:G-0}\\
&  =\sum_{n=0}^{\infty}P_{i}(\tilde{\eta}_{n}=j)-\sum_{n=0}^{\infty}%
P_{i}(\tilde{\eta}_{n}=j,n\geq\tilde{\sigma})\nonumber\\
&  =G(i,j)-E_{i}[G_{N}(\tilde{\eta}_{\tilde{\sigma}},j)],\nonumber
\end{align}
by the strong Markov property of $\tilde{\eta}_{n}$. Therefore, we have%
\[
|\nabla_{j,e}\tilde{G}_{N}(i,j)|\leq|\nabla_{j,e}G(i,j)|+E_{i}[|\nabla
_{j,e}G_{N}(\tilde{\eta}_{\tilde{\sigma}},j)|],
\]
and we obtain \eqref{eq:G-3} from the well-known estimate on the Green's
function $G$ on ${\mathbb{Z}}^{d}$ (e.g., \cite{L}, Theorem 1.5.5, p.32). This
proves (1). (2) is an immediate consequence of (1), as%
\[
\sup_{i}\sum_{j\in kN+D_{N}}\frac{1}{1+|i-j|^{d-1}}\leq CN.
\]
The next task is to show \eqref{eq:G-4}.  We assume $i\in D_N$
and $j=j_0+kN$ with  $j_0\in D_N$ and $k\in {\mathbb Z}^{d-1}$.
We denote $\Gamma_N(0) = \{\underbar{$i$} = (i_2,\ldots,i_d) \in {\mathbb Z}^{d-1},
0\le i_\ell<N, \ell=2,\ldots,d\}$ the box in ${\mathbb Z}^{d-1}$
with side length $N$ and divide ${\mathbb Z}^{d-1}$
into a disjoint union of boxes $\{\Gamma_N(\underbar{$i$}) = \underbar{$i$}
+ \Gamma(0); \underbar{$i$} \equiv 0 \text{ modulo }N\}$.  For $k\in {\mathbb Z}^{d-1}$,
let $\Gamma_{3N}(k)$ be the box with side length $3N$ with $\Gamma_N(\underbar{$i$})$
as its center, where $\underbar{$i$}$ is determined in such a manner
that $k \in \Gamma_N(\underbar{$i$})$.
We set $\bar\sigma := \inf\{n\ge 0; (\tilde\eta^{(2)}_n, \ldots,
\tilde\eta^{(d)}_n) \in \Gamma_{3N}(k)\}$.  Note that $i$ and $\Gamma_{3N}(k)$
are separate enough by the condition $|i-j|\ge 5N$.
Then, by the strong Markov property,
\begin{align*} 
\tilde G_N(i,j) & = E_i\left[\sum_{n=0}^\infty 1_{\{\tilde\eta_n=j_0+kN, n\le \tilde\sigma\}} \right]\\
& = E_i\left[E_{\tilde\eta_{\bar\sigma}}\left[\sum_{n=0}^\infty 1_{\{\tilde\eta_n=j_0+kN, 
n\le \tilde\sigma\}} \right], \bar\sigma<\tilde\sigma\right]\\
& = E_i\left[\tilde G_N(\tilde\eta_{\bar\sigma}, j_0+kN), 
\bar\sigma<\tilde\sigma\right]\\
& \le \frac{C}{N^{d-2}} P_i(\bar\sigma<\tilde\sigma),
\end{align*}
since $|\tilde\eta_{\bar\sigma} - (j_0+kN)| \ge N$ and $\tilde G_N(i,j) \le G(i,j)
\le \frac{C}{1+|i-j|^{d-2}}$.   The event $\{\bar\sigma<\tilde\sigma\}$ means that 
the 2nd--$d$th components of the random walk $\underbar{$\tilde\eta$}_n 
:= (\tilde\eta_n^{(2)},\ldots, \tilde\eta_n^{(d)})$ hits $\{i\in \tilde D_N; 
\underbar{$i$}\in \partial \Gamma_{3N}(k)\}$ before the 1st component 
of the random walk $\tilde\eta_n^{(1)}$ hits $\{0,N\}$ (namely, the random walk
$\tilde\eta$ hits $\partial\tilde D_N$).  In other words, 
$\underbar{$\tilde\eta$}$ passes at least $|k|-2$ 
boxes $\Gamma_N(\underbar{$i$})$ before $\tilde\eta_n^{(1)}$ reaches
the boundary of one box of the same size.  Such probability
can be bounded by the geometric distribution
so that we obtain the desired estimate.
\end{proof}

The following lemma will be used in the proof of Proposition
\ref{Prop_Coarse_graining}.

\begin{lemma}  \label{lem:2.5-b}
We have that
\[
\sup_{i\in D_{N}}\sum_{j\in D_{N}}|\nabla_{j,e}G_{N}(i,j)|\leq CN.
\]
\end{lemma}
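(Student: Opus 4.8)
The plan is to transfer the bound from the torus‑slab Green's function $G_{N}$ to the infinite‑slab Green's function $\tilde G_{N}$ by the periodization \eqref{eq:G-2}, $G_{N}(i,j)=\sum_{k\in{\mathbb Z}^{d-1}}\tilde G_{N}(i,j+kN)$, and then to control each image box by Lemma \ref{lem:G-1}(2) together with the geometric‑crossings estimate already used to prove \eqref{eq:G-4}. A unit shift of $j$ in a coordinate direction $e$ only reindexes the periodized sum, so $\nabla_{j,e}G_{N}(i,j)=\sum_{k}\bigl(\tilde G_{N}(i,j+kN+e)-\tilde G_{N}(i,j+kN)\bigr)$ (with the usual convention that the Green's functions vanish off the interior). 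Hence, by the triangle inequality and non‑negativity,
\[
\sum_{j\in D_{N}}|\nabla_{j,e}G_{N}(i,j)|\ \le\ \sum_{k\in{\mathbb Z}^{d-1}}\ \sum_{j\in kN+D_{N}}|\nabla_{j,e}\tilde G_{N}(i,j)| ,
\]
so it suffices to bound the inner box‑sum, uniformly in $i\in D_{N}$, by a quantity summable over $k\in{\mathbb Z}^{d-1}$ to $O(N)$.

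For the finitely many $k$ with $|k|\le k_{0}$, where $k_{0}=k_{0}(d)$ is fixed large enough that for $|k|>k_{0}$ the horizontal footprint $\Gamma_{N}(kN)$ of $kN+D_{N}$ is separated by several cells of side $N$ from that of $D_{N}$, Lemma \ref{lem:G-1}(2) directly gives $\sum_{j\in kN+D_{N}}|\nabla_{j,e}\tilde G_{N}(i,j)|\le CN$, so these terms contribute at most $C(d)N$. For $|k|>k_{0}$ I would argue by a reflection at the first visit to the image box: fix such a $k$, let $\Gamma_{3N}(k)$ be the box of side $3N$ centred on $\Gamma_{N}(kN)$ exactly as in the proof of Lemma \ref{lem:G-1}(3), and set $\bar\sigma=\inf\{n\ge0:\underline{\tilde\eta}_{n}\in\Gamma_{3N}(k)\}$. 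For every $j\in kN+D_{N}$ the horizontal parts of both $j$ and $j+e$ lie in $\Gamma_{3N}(k)$, whereas the walk from $i$ cannot reach a site with horizontal part in $\Gamma_{3N}(k)$ before $\bar\sigma$; hence it visits neither $j$ nor $j+e$ before $T:=\bar\sigma\wedge\tilde\sigma$.

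Since $\tilde G_{N}(\cdot,j)$ is harmonic (for the step kernel) on $\tilde D_{N}^{\circ}\setminus\{j\}$, is bounded, and vanishes on $\partial\tilde D_{N}$, optional stopping at $T$ gives $\tilde G_{N}(i,j)=E_{i}[\tilde G_{N}(\tilde\eta_{\bar\sigma},j);\bar\sigma<\tilde\sigma]$, and likewise with $j$ replaced by $j+e$, so $\nabla_{j,e}\tilde G_{N}(i,j)=E_{i}\bigl[\nabla_{j,e}\tilde G_{N}(\tilde\eta_{\bar\sigma},j);\,\bar\sigma<\tilde\sigma\bigr]$. Summing the absolute value over $j\in kN+D_{N}$, pulling out the common indicator $\mathbf 1_{\{\bar\sigma<\tilde\sigma\}}$, and applying Lemma \ref{lem:G-1}(2) once more — now to the walk restarted at $\tilde\eta_{\bar\sigma}$ — yields
\[
\sum_{j\in kN+D_{N}}|\nabla_{j,e}\tilde G_{N}(i,j)|\ \le\ P_{i}(\bar\sigma<\tilde\sigma)\ \sup_{y\in\tilde D_{N}}\ \sum_{j\in kN+D_{N}}|\nabla_{j,e}\tilde G_{N}(y,j)|\ \le\ CN\,P_{i}(\bar\sigma<\tilde\sigma).
\]
The geometric‑crossings bound from the proof of Lemma \ref{lem:G-1}(3) — the horizontal walk must cross of order $|k|$ successive cells of side $N$ before the first coordinate leaves $\{1,\dots,N-1\}$ — gives $P_{i}(\bar\sigma<\tilde\sigma)\le Ce^{-c|k|}$ uniformly in $i\in D_{N}$, so $\sum_{|k|>k_{0}}\sum_{j\in kN+D_{N}}|\nabla_{j,e}\tilde G_{N}(i,j)|\le CN\sum_{|k|>k_{0}}e^{-c|k|}\le C'N$. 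Together with the near part this proves $\sup_{i}\sum_{j}|\nabla_{j,e}G_{N}(i,j)|\le CN$.

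The step I expect to be the real obstacle, and the reason a bare‑hands estimate fails, is exactly this far part: bounding $|\nabla_{j,e}\tilde G_{N}(i,j)|$ pointwise by $\tilde G_{N}(i,j)+\tilde G_{N}(i,j+e)\le CN^{-(d-2)}e^{-c|i-j|/N}$ through \eqref{eq:G-4} discards the gradient cancellation, and summed over the $\asymp N^{d}$ sites of one image box it only produces $O(N^{2})$, even after summing a geometric series in $k$. The reflection identity is what recovers the missing power of $N$, replacing the crude pointwise control on each far box by the already‑proved uniform bound of Lemma \ref{lem:G-1}(2) for the \emph{restarted} walk, with all the smallness coming from the cost $P_{i}(\bar\sigma<\tilde\sigma)$ of ever entering that box. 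Beyond this, only routine points remain: $T<\infty$ a.s.\ and boundedness of $\tilde G_{N}(\cdot,j)$, so that optional stopping applies; the case $e=e_{1}$, where $G_{N},\tilde G_{N}$ vanish on the right face and the periodization identity still holds term by term; and $j$ on the boundary of its cell $\Gamma_{N}(kN)$, which is harmless because $\Gamma_{3N}(k)$ leaves an $N$‑wide buffer on each side.
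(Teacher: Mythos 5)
Your argument is correct and is essentially the paper's own proof: both periodize via \eqref{eq:G-2}, apply the strong Markov property at the first entrance into (a neighborhood of) the image box $kN+D_N$ to write $\nabla_{j,e}\tilde G_N(i,j)$ as an expectation of the gradient seen from the restarted walk, invoke Lemma \ref{lem:G-1}-(2) to bound the box-sum by $CN$ times the entrance probability, and sum the exponentially decaying entrance probabilities over $k$. The only differences are cosmetic (the paper uses $D_N^{(k)}$ enlarged by one layer and treats all $k$ uniformly, whereas you use the $3N$-buffer box and split off the finitely many near boxes), so no further changes are needed.
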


\begin{proof}
For $k\in\mathbb{Z}^{d-1}$, we write $D_{N}^{\left(  k\right)  }$ for
$D_{N}+kN$ enlarged by \textquotedblleft one layer\textquotedblright, so that
for any $j,e$, we can find $k$ with $j,j\in D_{N}^{\left(  k\right)  }.$ Let
$\tau_{k}$ for the first entrance time of the random walk $\left\{
\tilde{\eta}_{n}\right\}  $ into $D_{N}^{\left(  k\right)  }$. ($\tau_{k}=0$
if $\tilde{\eta}_{n}\in D_{N}^{\left(  k\right)  }$). Remember that
$\tilde{\sigma}$ was the first hitting time of $\partial\tilde{D}_{N}.$ Using
the strong Markov property, we have for $j,j+e\in D_{N}^{\left(  k\right)  },$%
\[
\tilde{G}_{N}(i,j)-\tilde{G}_{N}(i,j+e)=E_{i}[\left(  G_{N}(\tilde{\eta}%
_{\tau_{k}},j)-G_{N}(\tilde{\eta}_{\tau_{k}},j+e)\right)  1_{\tau_{k}%
<\tilde{\sigma}}].
\]
We use the representation (\ref{eq:G-2}) which leads to%
\[
\sum_{j\in D_{N}}\left\vert \nabla_{j,e}G_{N}(i,j)\right\vert =\sum
_{j\in\tilde{D}_{N}}\left\vert \nabla_{j,e}\tilde{G}_{N}(i,j)\right\vert
\leq\sum_{k\in\mathbb{Z}^{d-1}}\sum_{j\in D_{N}^{\left(  k\right)  }%
}\left\vert \nabla_{j,e}\tilde{G}_{N}(i,j)\right\vert .
\]
Using Lemma \ref{lem:G-1}-(2), we have%
\[
\sum_{j\in D_{N}^{\left(  k\right)  }}\left\vert \nabla_{j,e}\tilde{G}%
_{N}(i,j)\right\vert \leq CNP_{i}\left(  \tau_{k}<\tilde{\sigma}\right)  ,
\]
implying%
\[
\sum_{j\in D_{N}}|\nabla_{j,e}G_{N}(i,j)|\leq CN\sum_{k}P_{i}\left(  \tau
_{k}<\tilde{\sigma}\right)  .
\]
It is however easy to see that for $i\in D_{N},$ $P_{i}\left(  \tau_{k}%
<\tilde{\sigma}\right)  $ is exponentially decaying in $\left\vert
k\right\vert $, so the sum on the left hand side is finite, with a bound which
is independent of $i\in D_{N}$.
\end{proof}

\subsection{Decoupling estimate, the case without pinning} \label{section:2.4}

The next lemma, which corresponds to Lemma 2.3.1-c) in \cite{BI}, is prepared
for the next subsection. We set
\begin{align*}
c_N := & \sup_{k\in D_N^\circ} \sum_{n=1}^\infty
P_k^{RW^{d,N}}(\eta_{2n}=k, 2n<\sigma) \\
= & \sup_{k\in D_N^\circ} G_N(k,k),
\intertext{and, recalling $d\ge 3$,}
\bar c := & G(0,0) = \sum_{n=1}^{\infty}P_{0}^{RW^{d}}(\eta_{2n}=0).
\end{align*}

\begin{lemma} \label{lem:3} 
Assume $d\geq3$. Then, we have the following two assertions. \\
{\rm (1)} $c_N$ is bounded: $c_N \le C.$  \\
{\rm (2)} For two disjoint sets $A,C\subset D_{N}^{\circ}$, if $N$ is even,
we have
\[
0\leq\log\frac{Z_{A\cup C}^{0}}{Z_{A}^{0}Z_{C}^{0}}\leq\frac{c_N}%
{2}|\partial_{A}C|,
\]
where $\partial_{A}C=\partial A\cap C$.
\end{lemma}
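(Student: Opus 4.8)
The plan is to use the random walk representation \eqref{eq:3.8}--\eqref{eq:3.9} for the log-partition functions, exactly as in the proof of Lemma~\ref{lem:2}. For assertion~(1), note that $c_N = \sup_{k} G_N(k,k)$, and by the embedding \eqref{eq:G-2} we have $G_N(k,k) = \sum_{\ell\in\mathbb{Z}^{d-1}}\tilde G_N(k,k+\ell N) = \tilde G_N(k,k) + \sum_{\ell\neq 0}\tilde G_N(k,k+\ell N)$. The diagonal term is bounded by $G(0,0)=\bar c<\infty$ since killing the walk at $\partial\tilde D_N$ only decreases the Green's function and the walk on $\mathbb{Z}^d$ is transient ($d\ge 3$); the off-diagonal sum is bounded using $\tilde G_N(k,k+\ell N)\le G(k,k+\ell N)\le C/(1+|\ell|N)^{d-2}$ together with the sharper decay \eqref{eq:G-4} for $|\ell|N\ge 5N$, which makes $\sum_{\ell\neq 0}$ summable uniformly in $N$. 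So $c_N\le C$.

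For assertion~(2), I would write, using \eqref{eq:3.8},
\[
\log\frac{Z_{A\cup C}^{0}}{Z_{A}^{0}Z_{C}^{0}}
=\tfrac12\big(I_{A\cup C}-I_A-I_C\big),
\]
where $I_B=\sum_{k\in B}\sum_{n\ge 1}\tfrac1{2n}P_k^{RW^{d,N}}(\eta_{2n}=k,\tau_B>2n)$. Since $A,C$ are disjoint and $A\cup C\supset A$, $A\cup C\supset C$, the event $\{\tau_{A\cup C}>2n\}$ contains $\{\tau_A>2n\}$ for $k\in A$ and $\{\tau_C>2n\}$ for $k\in C$; hence $I_{A\cup C}\ge I_A+I_C$, giving the lower bound $0$. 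For the upper bound, the difference $I_{A\cup C}-I_A-I_C$ equals
\[
\sum_{k\in A}\sum_{n\ge1}\tfrac1{2n}P_k(\eta_{2n}=k,\tau_A\le 2n<\tau_{A\cup C})
+\sum_{k\in C}\sum_{n\ge1}\tfrac1{2n}P_k(\eta_{2n}=k,\tau_C\le 2n<\tau_{A\cup C}).
\]
For a term in the first sum, on the event $\{\tau_A\le 2n<\tau_{A\cup C}\}$ the walk starting in $A$ must exit $A$ while staying in $A\cup C$, so at time $\tau_A$ it sits on $\partial_A C=\partial A\cap C$; conditioning at that first visit to $\partial_A C$ via the strong Markov property and dropping the constraint that it return to $k$ by time $2n$ on the first leg, one bounds the contribution of each starting point $k\in A$ by
\[
\sum_{w\in\partial_A C}P_k(\text{walk hits }w\text{ before }\partial_A C\setminus\{w\}\ \text{(roughly)})\cdot\sup_{w}\sum_{m\ge1}\tfrac1{2m}P_w(\eta_{2m}=w,\ \cdot\,),
\]
and summing over $k\in A$ converts the first factor into something controlled by $G_N$ evaluated between $\partial_A C$ and itself. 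The cleanest bookkeeping is the one used in \cite{BI}, Lemma~2.3.1-c): decompose the excursion at its last visit to $\partial_A C$ rather than the first, so that the sum over $k\in A$ of the return probabilities telescopes into $\sum_{w\in\partial_A C}G_N(w,w)\le c_N|\partial_A C|$, and the factor $\tfrac1{2n}$ is absorbed. Carrying this out symmetrically for the $C$-sum (whose analogous boundary term $\partial C\cap A$ is also contained in the relevant set, but one checks it contributes to the same count $|\partial_A C|$ or is likewise bounded) yields $I_{A\cup C}-I_A-I_C\le c_N|\partial_A C|$, hence the claimed bound after dividing by $2$.

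The main obstacle is the combinatorial bookkeeping in the upper bound of~(2): one must organize the decomposition of the return-to-$k$ loops according to their visits to the interface $\partial_A C$ so that the $\sum_k$ and $\sum_n\tfrac1{2n}$ collapse to exactly $c_N|\partial_A C|$ without losing a constant, and one must be careful that in our periodic geometry $RW^{d,N}$ the analogue of the boundary set is correctly identified (the proof in \cite{BI} is for $\mathbb{Z}^d$, so as in Lemma~\ref{lem:2} one lifts to $\tilde\eta$ on $\mathbb{Z}\times\mathbb{T}_N^{d-1}$ and notes $\tau_A$ is unchanged). Everything else is a direct transcription of \cite{BI} together with the Green's function estimates of Lemma~\ref{lem:G-1}.
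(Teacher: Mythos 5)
Your proposal is correct and follows essentially the same route as the paper: part (1) via the decomposition \eqref{eq:G-2}, the bound $\tilde G_N\le G$ together with the exponential estimate \eqref{eq:G-4} for the far summands, and part (2) via the random walk representation \eqref{eq:3.8}--\eqref{eq:3.9}, monotonicity of the events for the lower bound, and the loop re-rooting at the interface $\partial_A C$ from Lemma 2.3.1-c) of \cite{BI}, using $\tau_{A\cup C}\le\sigma$ to pass to $c_N$. The combinatorial bookkeeping you single out as the main obstacle is treated no more explicitly in the paper, which likewise defers it to \cite{BI}.
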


\begin{proof}
For (1), from \eqref{eq:G-2}, we have that
\begin{equation} \label{eq:G-8}
G_N(k,k) = \sum_{\ell\in {\mathbb Z}^{d-1}} \tilde G_N(k,k+\ell N).
\end{equation}
From \eqref{eq:G-0}, we see that $\tilde G_N(i,j) \le G(i,j)$.
Since $G(i,j)$ is bounded, the sum in the right hand side of \eqref{eq:G-8} 
over $\ell: |\ell|\le 5$ is bounded in $N$.
To show the sum over $|\ell|\ge 6$ is also bounded, we can apply the estimate
 \eqref{eq:G-4}:
$$
\sum_{\ell\in {\mathbb Z}^{d-1:|\ell|\ge 6}} \tilde G_N(k,k+\ell N)
\le C\sum_{|\ell|\ge 6} e^{-c|\ell|} < \infty.
$$
For (2), we follow the arguments in the middle of p.544 of \cite{BI}. From
\eqref{eq:3.8} and \eqref{eq:3.9}, we have
\begin{align*}
2\log\frac{Z_{A\cup C}^{0}}{Z_{A}^{0}Z_{C}^{0}}=  &  \sum_{k\in A}\sum
_{n=1}^{\infty}\frac{1}{2n}P_{k}^{RW^{d,N}}(\eta_{2n}=k,\tau_{A}%
<2n<\tau_{A\cup C})\\
&  +\sum_{k\in C}\sum_{n=1}^{\infty}\frac{1}{2n}P_{k}^{RW^{d,N}}(\eta
_{2n}=k,\tau_{C}<2n<\tau_{A\cup C}),
\end{align*}
note that \textquotedblleft$\tau_{A}=2n$\textquotedblright\ does not occur
under \textquotedblleft$\eta_{2n}=k\in A$\textquotedblright. The lower bound
in (1) is now clear. To show the upper bound, as in \cite{BI}, 
noting that $\tau_{A\cup C} \le \sigma$ for $A, C\subset D_N^\circ$,
we further estimate the right hand side by
\begin{align*}
&  \leq\sum_{k\in\partial_{A}C}\sum_{n=1}^{\infty}P_{k}^{RW^{d,N}}(\eta
_{2n}=k,\tau_{A\cup C}>2n)\\
&  \leq|\partial_{A}C|\sum_{n=1}^{\infty}P_{0}^{RW^{d,N}}(\eta_{2n}%
=0, 2n<\sigma)=c_N|\partial_{A}C|,
\end{align*}
which concludes the proof of the assertion (2).
\end{proof}

\subsection{Estimates on the partition functions with pinning}

For $A\subset D_{N}^{\circ}$, we set
\[
Z_{A}^{0,\varepsilon}=\int_{{\mathbb{R}}^{A}}\mathrm{e}^{-H_{A}^{0}(\phi
)}\prod_{i\in A}\left(  \varepsilon\delta_{0}(d\phi_{i})+d\phi_{i}\right)  .
\]
The next lemma, which corresponds to Lemma 2.3.1-b) in \cite{BI}, is proved
based on Lemma \ref{lem:3}.

\begin{lemma}
\label{lem:4} Assume $d\geq3$. Then, there exists a constant $\hat
{q}^{\varepsilon}>0$ such that
\[
\hat{q}^{\varepsilon}|A|-\frac{\bar c}{4}\left(  |\partial A|+4\ell_{1}%
(A)N^{d-2}\right)  \leq\log Z_{A}^{0,\varepsilon}\leq\hat{q}^{\varepsilon
}|A|+c_N\ell_{1}(A)N^{d-2},
\]
for every rectangles $A\subset D_{N}^{\circ}$, where $\ell_{1}(A)$ denotes the
side length of $A$ in the first coordinate's direction.
\end{lemma}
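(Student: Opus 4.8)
The plan is to reduce the pinning partition function $Z_A^{0,\varepsilon}$ to a sum over the ``pinned set'' $B\subset A$ of points where $\phi_i=0$, exactly as in the Bolthausen--Ioffe approach. Expanding the product $\prod_{i\in A}(\varepsilon\delta_0(d\phi_i)+d\phi_i)$ gives
\[
Z_A^{0,\varepsilon}=\sum_{B\subset A}\varepsilon^{|B|}\,Z_{A\setminus B}^{0},
\]
since conditioning $\phi\equiv 0$ on $B$ (together with the already-zero boundary $\partial A$) leaves a free Gaussian partition function on $A\setminus B$ with $0$-boundary conditions on $\partial(A\setminus B)\subset\partial A\cup B$. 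The task is then to estimate this sum from above and below. For the \emph{upper} bound I would use Lemma~\ref{lem:2}(1) to write $\log Z_{A\setminus B}^{0}\le\frac12(\log\frac\pi d+q^N)|A\setminus B|$, and then bound the remaining combinatorial/geometric sum over $B$; summing $\varepsilon^{|B|}e^{\frac12(\log\frac\pi d+q^N)(|A|-|B|)}$ over $B\subset A$ factorizes over sites, giving $\exp\{|A|\log(\varepsilon+ (\pi/d)^{1/2}e^{q^N/2})\}$ up to the correction, so one sets $\hat q^\varepsilon:=\log(\varepsilon+(\pi/d)^{1/2}e^{q/2})$ (using $q^N\to q$ from Lemma~\ref{lem:2}(2), the $N^{-d}$ error being absorbed). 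The boundary/side-length correction term $c_N\ell_1(A)N^{d-2}$ should come from the discrepancy between $q^N$ and the ``true'' on-diagonal Green's function contribution near $\partial A$, controlled via Lemma~\ref{lem:3}(2) with $|\partial_A C|\le C\ell_1(A)N^{d-2}$ for a rectangle (since a cross-section of $A$ perpendicular to the first axis has at most $N^{d-1}$ points but the relevant decoupling surface scales like $\ell_1(A)N^{d-2}$).

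For the \emph{lower} bound I would not sum over all $B$ but simply restrict to a single well-chosen $B$, or to product-form $B$'s. The cleanest route: use Lemma~\ref{lem:3}(2) in the form $Z_{A\setminus B}^0\ge \prod$ (over connected components, or over a product decomposition of $A\setminus B$) and combine with the lower bound in Lemma~\ref{lem:2}(1), $\log Z_{A\setminus B}^0\ge \frac12(\log\frac\pi d+q^N)|A\setminus B|-r\max_n|\partial (A\setminus B)_n|$. Then
\[
Z_A^{0,\varepsilon}\ge\sum_{B\subset A}\varepsilon^{|B|}e^{\frac12(\log\frac\pi d+q^N)(|A|-|B|)-r\max_n|\partial(A\setminus B)_n|};
\]
dropping the constraint that $B$ be chosen to keep $\max_n|\partial(A\setminus B)_n|$ small and instead noting that for the full sum over $B$ one recovers $\exp\{|A|\hat q^\varepsilon\}$ times an error, one arrives at $\log Z_A^{0,\varepsilon}\ge\hat q^\varepsilon|A|-(\text{error})$. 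The error is bounded by $\frac{\bar c}{4}(|\partial A|+4\ell_1(A)N^{d-2})$: the $|\partial A|$ piece absorbs the $r\le\bar c$ boundary defect from Lemma~\ref{lem:2}(1), and the $\ell_1(A)N^{d-2}$ piece absorbs the $q^N$-versus-$q$ correction together with the decoupling cost along cross-sectional hyperplanes, again using $r<\bar c=G(0,0)$.

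The main obstacle I expect is the bookkeeping of the \emph{geometric correction terms} rather than the leading volume term: one must check that for a rectangle $A$, every quantity that is not proportional to $|A|$ — namely $|\partial A|$, the surface terms $\max_n|\partial A_n|$ from Lemma~\ref{lem:2}, the decoupling boundaries $|\partial_A C|$ from Lemma~\ref{lem:3}, and the $q^N-q$ discrepancy from Lemma~\ref{lem:2}(2) — is $\le C\ell_1(A)N^{d-2}$ (possibly after also using $|\partial A|\le C(\ell_1(A)+1)N^{d-2}$, which holds because the cross-section in the periodic directions has size $\le N^{d-1}=N\cdot N^{d-2}$ but there are only $O(1)$ faces transverse to the first direction and $O(\ell_1(A))$ faces parallel to it, wait — more carefully: the two faces $\{$first coordinate extremal$\}$ each contribute $N^{d-1}$, which is \emph{not} $\le C\ell_1(A)N^{d-2}$ unless $\ell_1(A)\ge cN$; this is exactly why the stated bound has the explicit $+4\ell_1(A)N^{d-2}$ \emph{added to} $|\partial A|$ rather than replacing it). So the real care is in tracking which defects are genuinely of order $|\partial A|$ and which are of the smaller order $\ell_1(A)N^{d-2}$, and in verifying that the constant in front can be taken to be $\bar c/4$ (lower) and $c_N$ (upper) after the elementary estimates $r<\bar c$ and $q^N\ge q$. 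Once the geometry is pinned down, the analytic content is a direct combination of Lemmas~\ref{lem:2} and~\ref{lem:3} with the site-factorization of the pinning sum.
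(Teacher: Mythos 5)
There is a genuine gap, and it lies in your identification of the constant $\hat q^{\varepsilon}$. After the (correct) expansion $Z_A^{0,\varepsilon}=\sum_{B\subset A}\varepsilon^{|B|}Z_{A\setminus B}^{0}$, you bound each $Z_{A\setminus B}^{0}$ by $e^{\hat q^{0}|A\setminus B|}$ and factorize over sites, which forces the definition $\hat q^{\varepsilon}:=\log\bigl(\varepsilon+e^{\hat q^{0}}\bigr)$. But that quantity is only an upper bound for the true free energy, not its value: for many $B$ the deficit $\hat q^{0}|A\setminus B|-\log Z^{0}_{A\setminus B}$ is of \emph{volume} order (e.g.\ if $A\setminus B$ consists of isolated sites, $\log Z^{0}_{A\setminus B}=\tfrac12\log\tfrac{\pi}{d}\,|A\setminus B|$, losing $\tfrac{q}{2}$ per site), so no matching lower bound with this same constant and only surface-order corrections can hold. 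Your proposed lower bound also does not go through as written: the error term $r\max_{n}|\partial(A\setminus B)_{n}|$ from Lemma \ref{lem:2} is of volume order for typical $B$ (a positive fraction of the sites of $A\setminus B$ neighbour $B$), so the sum over $B$ cannot be resummed to $e^{|A|\hat q^{\varepsilon}}$ minus a surface term, while restricting to a single $B$ (say $B=\emptyset$ or $B=A$) only yields $\hat q^{0}|A|$ or $|A|\log\varepsilon$, in general strictly below $\hat q^{\varepsilon}|A|$. Note also that the lemma must produce the genuine free energy, since later $\xi^{\varepsilon}=\hat q^{\varepsilon}-\hat q^{0}$ is used (Remark \ref{rem:3.3}), so one is not free to redefine $\hat q^{\varepsilon}$ as the factorized constant.

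The paper's argument, which your expansion almost reaches, is structured differently: the expansion is combined with the decoupling estimate of Lemma \ref{lem:3}-(2) to prove approximate sub/superadditivity of $\log Z^{0,\varepsilon}$ itself, namely $\log Z_{B}^{0,\varepsilon}+\log Z_{B'}^{0,\varepsilon}\le\log Z_{B\cup B'}^{0,\varepsilon}\le\log Z_{B}^{0,\varepsilon}+\log Z_{B'}^{0,\varepsilon}+\tfrac{c_N}{2}|\partial_{B}B'|$ (with $\bar c$ in place of $c_N$ for sets not wrapping the torus). One then runs the standard box-doubling argument on rectangles $S_{p}\subset\mathbb{Z}^{d}$ to define $\hat q^{\varepsilon}=\lim_{m}Q(2^{m}p)$ and to obtain the sandwich $\hat q^{\varepsilon}-\tfrac{\bar c}{4}\tfrac{|\partial S_{p}|}{|S_{p}|}\le Q(p)\le\hat q^{\varepsilon}$, which already gives the lemma (without any $\ell_{1}(A)N^{d-2}$ term) for rectangles that do not wrap around the torus. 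Finally, the $\ell_{1}(A)N^{d-2}$ corrections are not a $q^{N}$-versus-$q$ or near-boundary Green's function effect as you suggest: they appear only when $A$ wraps around $\mathbb{T}_{N}^{d-1}$, in which case $A$ is cut into two non-wrapping rectangles $A_{1},A_{2}$ along the second coordinate, with $|\partial_{A_{1}}A_{2}|=2\ell_{1}(A)N^{d-2}$, and the subadditivity estimate is applied across that cut.
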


\begin{proof}
We follow the arguments from the bottom of p.544 to p.545 of \cite{BI} noting
that we are discussing under the periodic boundary condition for the second to
the $d$th coordinates. We first observe that
\begin{equation}
\log Z_{B}^{0,\varepsilon}+\log Z_{B^{\prime}}^{0,\varepsilon}\leq\log
Z_{B\cup B^{\prime}}^{0,\varepsilon}\leq\log Z_{B}^{0,\varepsilon}+\log
Z_{B^{\prime}}^{0,\varepsilon}+\frac{c_N}{2}|\partial_{B}B^{\prime}|,
\label{eq:3.a}%
\end{equation}
for every disjoint $B,B^{\prime}\subset D_{N}^{\circ}$. In fact, since
\[
Z_{B\cup B^{\prime}}^{0,\varepsilon}=\sum_{A\subset B}\sum_{C\subset
B^{\prime}}\varepsilon^{|B\setminus A|+|B^{\prime}\setminus C|}Z_{A\cup C}%
^{0},
\]
the lower bound in \eqref{eq:3.a} follows from $Z_{A\cup C}^{0}\geq Z_{A}%
^{0}Z_{C}^{0}$ (see the lower bound in Lemma \ref{lem:3}-(2)), while the upper
bound follows from
\[
Z_{A\cup C}^{0}\leq Z_{A}^{0}Z_{C}^{0}\mathrm{e}^{\frac{1}{2}c_N%
|\partial_{A}C|}\leq Z_{A}^{0}Z_{C}^{0}\mathrm{e}^{\frac{1}{2}c_N%
|\partial_{B}B^{\prime}|}.
\]
In a similar way, we have that
\begin{equation}
\log Z_{B}^{0,\varepsilon}+\log Z_{B^{\prime}}^{0,\varepsilon}\leq\log
Z_{B\cup B^{\prime}}^{0,\varepsilon}\leq\log Z_{B}^{0,\varepsilon}+\log
Z_{B^{\prime}}^{0,\varepsilon}+\frac{\bar c}{2}|\partial_{B}B^{\prime}|,
\label{eq:2.11}%
\end{equation}
for every disjoint $B,B^{\prime}\Subset\mathbb{Z}^{d}$ (or $B,B^{\prime
}\subset D_N^\circ$ which do not contain loops in periodic directions).

For $p=(p_{1},\ldots,p_{d})\in\mathbb{N}^{d}$, let $S_{p}=\prod_{\alpha=1}%
^{d}[1,p_{\alpha}]\cap\mathbb{Z}^{d}$ be the rectangle in $\mathbb{Z}^{d}$
with volume $|S_{p}|=\prod_{\alpha=1}^{d}p_{\alpha}$ and set $Q(p)=\frac
{1}{|S_{p}|}\log Z_{S_{p}}^{0,\varepsilon}$. Then, one can show that the
limit
\[
\hat{q}^{\varepsilon}=\lim_{m\rightarrow\infty}Q(2^{m}p)
\]
exists (independently of the choice of $p$) and
\begin{equation}
\hat{q}^{\varepsilon}-\frac{\bar c}{4}\frac{|\partial S_{p}|}{|S_{p}|}\leq
Q(p)\leq\hat{q}^{\varepsilon} \label{eq:2.12}%
\end{equation}
holds for every $p\in\mathbb{N}^{d}$. Indeed, as in \cite{BI}, \eqref{eq:2.11}
implies that
\[
Q(p)\leq\cdots\leq Q(2^{m-1}p)\leq Q(2^{m}p)\leq Q(2^{m-1}p)+\frac{\bar c}{4}%
\frac{|\partial S_{2^{m}p}|}{|S_{2^{m}p}|}.
\]
By letting $m\rightarrow\infty$, we obtain that
\[
Q(p)\leq\hat{q}^{\varepsilon}\leq Q(p)+\frac{\bar c}{4}\sum_{m=1}^{\infty}%
\frac{|\partial S_{2^{m}p}|}{|S_{2^{m}p}|}=Q(p)+\frac{\bar c}{4}\frac{|\partial
S_{p}|}{|S_{p}|},
\]
which implies \eqref{eq:2.12}.

The conclusion of the lemma follows from \eqref{eq:2.12} if $A=S_{p}\subset
D_{N}^{\circ}$ does not contain loops in periodic directions. In fact, for
such $A$, better inequalities hold:
\[
\hat{q}^{\varepsilon}|A|-\frac{\bar c}{4}|\partial A|\leq\log Z_{A}^{0,\varepsilon
}\leq\hat{q}^{\varepsilon}|A|.
\]
If the rectangle $A\subset D_{N}^{\circ}$ is periodically connected, we divide
it into two rectangles: $A=A_{1}\cup A_{2}$, where $A_{1}=A\cap\{i\in
D_{N}^{\circ};0\leq i_{2}\leq\frac{N}{2}-1\}$ and $A_{2}=A\cap\{i\in
D_{N}^{\circ};\frac{N}{2}\leq i_{2}\leq N-1\}$. Then, noting that
$|\partial_{A_{1}}A_{2}|=2\ell_{1}(A)N^{d-2}$, the conclusion follows from
\eqref{eq:3.a} (with $B=A_{1},B^{\prime}=A_{2}$) and \eqref{eq:2.12} (applied
for each of $A_{1}$ and $A_{2}$).
\end{proof}

\begin{remark}
\label{rem:3.3}
{\rm (1)} The $\varepsilon$-dependent quantity is only $\hat{q}^{\varepsilon
}$; $c_N$ and $\bar c$ are independent of $\varepsilon$.\\
{\rm (2)} Lemmas \ref{lem:2}, \ref{lem:4} (for $A\Subset{\mathbb{Z}}^{d}$)
and \eqref{eq:1.5} imply that $\xi^{\varepsilon}=\hat{q}^{\varepsilon}-\hat
{q}^{0}$ and $\hat{q}^{0}=\frac{1}{2}(\log\frac{\pi}{d}+q)$.
\end{remark}

\section{Stability result}  \label{section:3}

\subsection{Stability at macroscopic level}

Recall that the macroscopic energy (LD unnormalized rate functional) 
$\Sigma(h)$ of $h:D\to{\mathbb{R}}$ is given by \eqref{eq:1.Sigma}.
We set $\Sigma^{*}(h) =\Sigma(h)-\min\Sigma$. Note that $\Sigma(\bar h) =
\frac12 (a-b)^{2}$ and $\Sigma(\hat h) = \sqrt{2\xi^{\varepsilon}} (a+b) -
\xi^{\varepsilon}$, see p.446 of \cite{BFO}, and $\Sigma(\bar h) = \Sigma(\hat h) =
\min\Sigma$ from our assumption.

\begin{proposition}
\label{prop:macrostability} If $\delta_{1} >0$ is sufficiently small,
$\Sigma^{*}(h) < \delta_{1}$ implies $d_{L^{1}}(h,\{\bar h, \hat h\}) <
\delta_{2}$ with $\delta_{2}= c\delta_{1}^{1/4}$ and some $c>0$.
\end{proposition}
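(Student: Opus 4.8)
The plan is to reduce the statement to a one-dimensional quantitative stability estimate for the functional $\Sigma^{(1)}$, and then integrate over the torus variable $\underline t$, mirroring the structure of the decomposition \eqref{eq:1.7}. First I would recall that the boundary conditions \eqref{eq:1.6} are fixed, so we may work on the affine space of $h$ with these traces; by \eqref{eq:1.7} we have
\[
\Sigma^*(h) = \int_{\mathbb{T}^{d-1}} \bigl(\Sigma^{(1)}(h(\cdot,\underline t)) - \min\Sigma^{(1)}\bigr)\, d\underline t + \tfrac12 \int_D |\nabla_{\underline t} h|^2\, dt,
\]
where $\min\Sigma^{(1)} = \Sigma^{(1)}(\hat h^{(1)}) = \Sigma^{(1)}(\bar h^{(1)})$ by our criticality assumption. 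Both terms are nonnegative, so $\Sigma^*(h) < \delta_1$ forces each to be small.

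The core analytic input is a one-dimensional claim: there is a constant $c>0$ and a small $\eta_0>0$ such that for every $g:[0,1]\to\mathbb{R}$ with $g(0)=a$, $g(1)=b$ and $\Sigma^{(1)}(g) - \min\Sigma^{(1)} < \eta < \eta_0$, one has $\|g - \hat h^{(1)}\|_{L^1(0,1)} \wedge \|g - \bar h^{(1)}\|_{L^1(0,1)} \le c\,\eta^{1/4}$. I would prove this by the direct method flavored argument: a near-minimizer $g$ has $\int_0^1 \dot g^2 \le C$, hence is $1/2$-Hölder with uniform constant, so any minimizing sequence is precompact in $C([0,1])$; since the only minimizers are $\hat h^{(1)}$ and $\bar h^{(1)}$, for small $\eta$ the function $g$ is $L^\infty$-close (hence $L^1$-close) to one of them. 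The quantitative rate $\eta^{1/4}$ comes from examining the two competing mechanisms: near $\bar h^{(1)}$ the functional is essentially $\frac12\int(\dot g - \dot{\bar h}^{(1)})^2$ plus a pinning bonus term controlled by the measure of the near-zero set, which is itself controlled by $\|g-\bar h^{(1)}\|_\infty$; near $\hat h^{(1)}$ one uses that the contact set $\{g=0\}$ can only shrink, and a boundary-layer computation shows the cost of moving the contact points $s_1^L, s_1^R$ by $\rho$ is of order $\rho^2$ while the pinning loss is of order $\rho$, giving a square-root relation between energy excess and $L^1$ displacement at that level, and then the fourth root after accounting for the worst interplay. (This is exactly the one-dimensional computation carried out in \cite{BFO}; I would cite Proposition B.1 and its surroundings there and only sketch the exponent bookkeeping.)

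Granting the 1D claim, I would finish as follows. Set $F(\underline t) = \Sigma^{(1)}(h(\cdot,\underline t)) - \min\Sigma^{(1)} \ge 0$, so $\int_{\mathbb{T}^{d-1}} F < \delta_1$. By Chebyshev, the set $G = \{\underline t : F(\underline t) \ge \sqrt{\delta_1}\}$ has measure $\le \sqrt{\delta_1}$. For $\underline t \notin G$ (and $\delta_1$ small enough that $\sqrt{\delta_1} < \eta_0$) the 1D claim gives a label $\ell(\underline t) \in \{\hat h^{(1)}, \bar h^{(1)}\}$ with $\|h(\cdot,\underline t) - \ell(\underline t)\|_{L^1(0,1)} \le c\,\delta_1^{1/8}$. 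The remaining point is to show the label is (essentially) constant in $\underline t$: this is where the second term $\frac12\int_D |\nabla_{\underline t} h|^2 < \delta_1$ enters. Since $\hat h^{(1)}$ and $\bar h^{(1)}$ are at fixed $L^1$-distance $\kappa_0 > 0$ from each other, a jump in the label between two nearby slices would force a large $\underline t$-gradient on the connecting region; quantifying this via a one-dimensional trace/Poincaré argument in the $\underline t$-directions shows the "bad" set of $\underline t$ where the label disagrees with the majority label has measure at most $C\delta_1/\kappa_0^2$. Combining: off a set of $\underline t$ of measure $O(\delta_1^{1/2})$, $h(\cdot,\underline t)$ is within $c\delta_1^{1/8}$ in $L^1(0,1)$ of a single fixed $\ell \in \{\hat h, \bar h\}$, and on the bad set we use the crude bound $\|h(\cdot,\underline t) - \ell(\underline t)\|_{L^1(0,1)} \le C$ coming from $\int\dot g^2 \le C$ and the fixed boundary data. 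Integrating in $\underline t$ yields $d_{L^1}(h, \{\bar h, \hat h\}) \le c' \delta_1^{1/8} + C\delta_1^{1/2} \le \delta_2$, and absorbing exponents gives the claimed $\delta_2 = c\delta_1^{1/4}$ after a slightly more careful split (optimizing the Chebyshev threshold rather than taking $\sqrt{\delta_1}$).

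The main obstacle I expect is controlling the label across slices, i.e.\ upgrading the pointwise-in-$\underline t$ dichotomy to a global one using only the integrated $\underline t$-gradient bound; the naive Chebyshev split loses too much in the exponent, so the quantitatively sharp version requires choosing the exceptional-set threshold as a power of $\delta_1$ tuned against both the $1/4$-exponent in the 1D estimate and the quadratic Poincaré cost, and verifying the bookkeeping delivers $\delta_1^{1/4}$ rather than something worse. The 1D square-root estimate near the pinned minimizer $\hat h^{(1)}$, where the nonsmooth free-boundary structure makes the energy landscape degenerate, is the other delicate point, but that computation is available from \cite{BFO}.
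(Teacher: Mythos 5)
Your overall architecture is the same as the paper's: use the slice decomposition \eqref{eq:1.7}, prove a one-dimensional stability estimate, apply Chebyshev in $\underline{t}$, and use the transverse gradient term to force a single label across slices (your variance/Poincar\'e argument for label constancy is essentially the paper's double-integral computation \eqref{eq:S-5} in a cleaner guise, and is fine). However, there are two genuine gaps.

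First, the quantitative bookkeeping does not deliver the stated rate $\delta_2=c\,\delta_1^{1/4}$ from your 1D input. You posit a 1D estimate of the form $\|g-\hat h^{(1)}\|_{L^1}\wedge\|g-\bar h^{(1)}\|_{L^1}\le c\,\eta^{1/4}$ for slice energy excess $\eta$. With a Chebyshev threshold $T$, the good slices contribute $O(T^{1/4})$ and the bad set (measure $\le\delta_1/T$) contributes, after the correct Cauchy--Schwarz treatment, $O(\sqrt{\delta_1/T})$; optimizing $T$ gives $T\sim\delta_1^{2/3}$ and an overall error $O(\delta_1^{1/6})$, not $\delta_1^{1/4}$. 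So your closing remark that re-tuning the threshold "delivers $\delta_1^{1/4}$" is not correct with the $\eta^{1/4}$ input. What makes $1/4$ work in the paper is the stronger one-dimensional estimate (Lemma \ref{lem:stability-1}): energy excess $\eta$ forces $L^\infty$ (hence $L^1$) distance at most $\sqrt{\eta}$ to one of $\bar h^{(1)},\hat h^{(1)}$; with that square-root rate the same splitting with $T=\sqrt{\delta_1}$ yields $\delta_1^{1/4}$. Since you defer the 1D estimate entirely to \cite{BFO} and only sketch the exponent, this sharper input (and its proof) is exactly the missing ingredient.

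Second, your treatment of the bad slices is unjustified as stated: you claim a per-slice bound $\|h(\cdot,\underline{t})-\ell\|_{L^1([0,1])}\le C$ "coming from $\int \dot g^2\le C$", but on bad slices there is no pointwise bound on the slice energy --- only its integral over $\underline{t}$ is controlled, and on a set of measure $\delta_1/T$ the slice energies can be large. The correct argument (as in \eqref{eq:S-4}) bounds the bad-set contribution by $\sqrt{|{\rm bad\ set}|}\,\|h\|_{L^2(D)}$ via Cauchy--Schwarz in $\underline{t}$, using the global bound $\|h\|_{L^2(D)}\le C_2$ that follows from $\Sigma^*(h)\le\delta_1$ by Poincar\'e and Sobolev embedding (Lemma \ref{lem:stability-2}); you never invoke such a global bound. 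This fix is routine, but it is needed both for the bad Chebyshev set and for the minority-label set in your argument.
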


\begin{remark}
The metric $d_{L^{1}}$ can be extended to $d_{L^{p}}$ with $p\in\lbrack
1,\frac{2d}{d-2})$, but with different rates for $\delta_{2}$.
\end{remark}

We begin with the stability in one-dimension under a stronger $L^{\infty}$-topology.

\begin{lemma}
\label{lem:stability-1} If $\delta_{1}>0$ is sufficiently small, for
$g:[0,1]\rightarrow\mathbb{R}$, $\Sigma^{\ast}(g)<\delta_{1}$ implies
$d_{L^{\infty}}(g,\{\bar{h}^{(1)},\hat{h}^{(1)}\})<\delta_{2}$ with
$\delta_{2}=\sqrt{\delta_{1}}$.
\end{lemma}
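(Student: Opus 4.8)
The plan is to analyze the one-dimensional functional $\Sigma^{(1)}(g) = \frac12\int_0^1 \dot g^2\,dt_1 - \xi^\varepsilon|\{g=0\}|$ directly, exploiting that under the critical assumption $\Sigma^{(1)}(\bar h^{(1)}) = \Sigma^{(1)}(\hat h^{(1)}) = \min\Sigma^{(1)}$. First I would note that $\Sigma^\ast(g) < \delta_1 < \infty$ forces $g \in H^1(0,1)$, so $g$ is (H\"older-$\frac12$) continuous and the level set $Z := \{t_1 : g(t_1)=0\}$ is closed; write $|Z| =: \lambda$. The strategy is to split according to whether $\lambda$ is small (then $g$ should be close to $\bar h^{(1)}$) or $\lambda$ is bounded away from $0$ (then $g$ should be close to $\hat h^{(1)}$), with the threshold chosen depending on $\delta_1$.

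The key quantitative tool is the following: on each maximal open interval $(p,r)$ of the complement of $Z$ on which $g$ does not vanish, the Dirichlet energy $\frac12\int_p^r \dot g^2$ is minimized, for fixed boundary values $g(p), g(r)$, by the affine interpolant, and equals $\frac{(g(r)-g(p))^2}{2(r-p)}$; any excess is controlled by $\int_p^r (\dot g - \text{slope})^2$, which by Cauchy--Schwarz bounds the $L^\infty$-deviation of $g$ from that affine function on $(p,r)$ by $\sqrt{(r-p)\cdot(\text{excess})} \le \sqrt{\text{excess}}$. So I would first replace $g$ by the piecewise-affine function $\tilde g$ that agrees with $g$ on $Z$ and is affine on each complementary interval; the bound just described gives $\|g - \tilde g\|_{L^\infty} \le \sqrt{2\,(\Sigma^{(1)}(g) - \Sigma^{(1)}(\tilde g))} \le \sqrt{2\delta_1 + O(\delta_1)}$ after absorbing the normalization constant, since $\tilde g$ has no larger pinning reward and no larger energy than $g$ minus the excess. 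Thus it suffices to show a piecewise-affine $\tilde g$ with $\Sigma^{(1)}(\tilde g) - \min\Sigma^{(1)}$ small is $L^\infty$-close to $\{\bar h^{(1)}, \hat h^{(1)}\}$; the constant $\sqrt{\delta_1}$ in the statement (rather than a multiple) will come from sharpening these estimates, e.g. by a more careful use of the strict convexity near the minimizers.

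For the piecewise-affine reduction I would use the explicit structure of the 1D variational problem (as recalled from \cite{BFO}, \cite{Fu05}): among piecewise-affine $g$ with boundary data $a,b$, $\Sigma^{(1)}$ as a function of the ``flat part'' is essentially governed by a function of one or two scalar parameters (the length $\lambda$ of the zero set and the position of its endpoints), and this reduced function is smooth with a strict (in the appropriate sense) minimum structure whose only two minima correspond to $\bar h^{(1)}$ ($\lambda=0$) and $\hat h^{(1)}$ ($\lambda = s_1^R - s_1^L > 0$). A Taylor expansion / compactness argument then yields: if the reduced objective is within $\delta_1$ of its minimum, the parameters are within $O(\sqrt{\delta_1})$ of one of the two minimizing configurations, and since the map from parameters to the function in $L^\infty$ is Lipschitz, $\tilde g$ (hence $g$) is within $O(\sqrt{\delta_1})$ of $\{\bar h^{(1)}, \hat h^{(1)}\}$. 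The main obstacle is the degenerate-looking case where $\lambda$ is positive but tiny, or where a short ``excursion away from $0$'' occurs inside what should be the flat part: here one must check that such configurations actually carry energy bounded away from zero (because creating a zero of width $\lambda$ while keeping the boundary slopes roughly $\sqrt{2\xi^\varepsilon}$ changes $\Sigma^{(1)}$ by an amount comparable to $\lambda$ near $\lambda=0^+$ but this is exactly balanced at the critical parameter, so one needs the second-order behaviour), so that only genuinely-near-$\bar h^{(1)}$ or near-$\hat h^{(1)}$ profiles survive the constraint $\Sigma^\ast < \delta_1$. Handling this transition region carefully — essentially a one-variable calculus estimate on the reduced functional near both critical values of $\lambda$ — is the crux; everything else is the routine $H^1 \to$ piecewise-affine comparison above.
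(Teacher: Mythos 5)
Your reduction to a piecewise-affine profile is sound as far as it goes (the per-interval Cauchy--Schwarz bound $\|g-\tilde g\|_{L^\infty}\le\sqrt{2\,(\text{excess})}$ and the inequality $\Sigma^{(1)}(\tilde g)\le\Sigma^{(1)}(g)-(\text{excess})$ both check out, since the zero set of $\tilde g$ is the convex hull of $\{g=0\}$ and hence the pinning term can only improve), but the proof stops exactly where the work is. The finite-dimensional statement you invoke --- that small excess of the reduced functional $F(p,r)=\frac{a^2}{2p}+\frac{b^2}{2(1-r)}-\xi^\varepsilon(r-p)$, $0<p\le r<1$, forces $(p,r)$ within $O(\sqrt{\delta_1})$ of $(s_1^L,s_1^R)$, while the no-touch branch gives $\bar h^{(1)}$ --- is asserted via ``Taylor expansion / compactness'' and then the ``transition region'' is explicitly deferred as the crux. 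Moreover your diagnosis of that crux is off: there is no delicate ``exactly balanced'' second-order phenomenon near $\lambda\to0^+$. A profile with boundary values $a,b$ that touches $0$ has Dirichlet energy at least $\tfrac12(a+b)^2$, so configurations with a tiny zero set exceed $\min\Sigma^{(1)}=\tfrac12(a-b)^2$ by at least $2ab$, an order-one barrier independent of the critical balance $\Sigma(\hat h)=\Sigma(\bar h)$; what is actually needed is only the (joint) convexity of $F$ and the nondegeneracy of its Hessian at its unique interior minimizer, which you neither state nor verify. As written, the central quantitative stability estimate is missing, so the argument is a plan rather than a proof.

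A second, smaller issue is the constant: the lemma claims $\delta_2=\sqrt{\delta_1}$ exactly, whereas your chain (factor $\sqrt2$ in the excess bound, a Lipschitz constant for the parameter-to-profile map, and a triangle inequality) can only give $C\sqrt{\delta_1}$, and the remark that ``a more careful use of strict convexity'' recovers $C=1$ is unsupported. The paper avoids both difficulties by proving the contrapositive with explicit comparison profiles: if $g$ never touches $0$ and $d_{L^\infty}(g,\bar h^{(1)})\ge\delta_2$, comparison with tent-shaped deformations of the straight line gives $\Sigma^\ast(g)\ge2\delta_2^2$; if $g$ touches $0$ and $d_{L^\infty}(g,\hat h^{(1)})\ge\delta_2$, comparison with the constrained minimizers built from Young's relation $a/s_1^L=b/(1-s_1^R)=\sqrt{2\xi^\varepsilon}$ gives $\Sigma^\ast(g)\ge\min\{2\sqrt{2\xi^\varepsilon}\,\delta_2,\ \delta_2^2\}$, which yields the stated bound directly. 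If you want to salvage your route, carry out the convexity analysis of $F$ (including the barrier argument above) and accept, or renormalize away, the constant in $\delta_2$.
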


\begin{proof}
Let us assume $d_{L^{\infty}}(g,\{\bar{h}^{(1)},\hat{h}^{(1)}\})\geq\delta
_{2}$, that is, $d_{L^{\infty}}(g,\bar{h}^{(1)})\geq\delta_{2}$ and
$d_{L^{\infty}}(g,\hat{h}^{(1)})\geq\delta_{2}$. First, we consider the case
where $g$ does not touch $0$, more precisely, $|\{t_{1}\in\lbrack
0,1];g(t_{1})=0\}|=0$. Then, the condition $d_{L^{\infty}}(g,\bar{h}%
^{(1)})\geq\delta_{2}$ implies
\begin{equation}
\Sigma^{\ast}(g)\geq2\delta_{2}^{2}. \label{eq:S-1}%
\end{equation}
Indeed, since the straight line has the lowest energy among curves which have
the same heights at both ends and do not touch $0$, we consider piecewise
linear functions $g^{t_{0}}$ with $t_{0}\in(0,1)$ defined by
\[
g^{t_{0}}(t_{1})=\left\{
\begin{array}
[c]{cc}%
a+\left\{  \left(  b-a\right)  \pm\frac{\delta_{2}}{t_{0}}\right\}  t_{1} &
\text{for } t_{1}\in\left[  0,t_{0}\right] \\
b+\left\{  \left(  b-a\right)  \pm\frac{\delta_{2}}{t_{0}-1}\right\}  \left(
t_{1}-1\right)  & \text{for } t_{1}\in\left[  t_{0},1\right]
\end{array}
\right.  .
\]
These functions satisfy $d_{L^{\infty}}(g^{t_{0}},\bar{h}^{(1)})=\delta_{2} $.
Thus, for $g$ satisfying $d_{L^{\infty}}(g,\bar{h}^{(1)})\geq\delta_{2} $ and
not touching $0$, we see that
\begin{align*}
\Sigma^{\ast}(g)  &  \geq\inf_{t_{0}\in(0,1)}\Sigma^{\ast}(g^{t_{0}})\\
&  =\inf_{t_{0}\in(0,1)}\frac{\delta_{2}^{2}}{2}\left(  \frac{1}{t_{0}}%
+\frac{1}{1-t_{0}}\right)  =2\delta_{2}^{2},
\end{align*}
by a simple computation, which proves \eqref{eq:S-1}.

Next, we consider the case where $g$ touches $0$, i.e., $|\{t_{1}\in
\lbrack0,1];g(t_{1})=0\}|>0$. Then, the condition $d_{L^{\infty}}(g,\hat
{h}^{(1)})\geq\delta_{2}$ implies
\begin{equation}
\Sigma^{\ast}(g)\geq\min\{2\sqrt{2\xi^{\varepsilon}}\delta_{2},\delta_{2}%
^{2}\}\;(=\delta_{2}^{2}\;\text{ if }\delta_{2}\leq2\sqrt{2\xi^{\varepsilon}%
}). \label{eq:S-2}%
\end{equation}
Indeed, it is known that the interval $[s_1^L,s_1^R]=\{t_{1}%
\in\lbrack0,1];\hat{h}^{(1)}(t_{1})=0\}$ of zeros of $\hat{h}^{(1)}$ is
determined by the so-called Young's relation:
\begin{equation}
\frac{a}{s_1^L}=\frac{b}{1-s_1^R}=\sqrt{2\xi^{\varepsilon}},
\label{eq:S-8}%
\end{equation}
see \cite{Fu05}, p.176, (6.26). Here we assume $a,b>0$ for simplicity.
First, consider the case where the discrepancy at least of size $\delta_{2}$
of $g$ from $\hat{h}^{(1)}$ occurs at $t_{0}\in\lbrack s_1^L
,s_1^R]$. For such $g$, the energy $\Sigma_{\lbrack0,t_{0}]}$ on the
interval $[0,t_{0}]$ has a lower bound:
\begin{align*}
\Sigma_{\lbrack0,t_{0}]}(g)  &  \geq\Sigma_{\lbrack0,t_{0}]}(\hat{g}%
_{[0,t_{0}]})\\
&  =\frac{a^{2}}{2s_1^L}-\sqrt{2\xi^{\varepsilon}}(t_{0}-s_1^L
-\theta)+\frac{\delta_{2}^{2}}{2\theta}\\
&  =(a+\delta_{2})\sqrt{2\xi^{\varepsilon}}-\xi^{\varepsilon}t_{0},
\end{align*}
where $\theta$ is determined by $\frac{\delta_{2}}{\theta}=\sqrt
{2\xi^{\varepsilon}}$, and $\hat{g}_{[0,t_{0}]}:[0,t_{0}]\rightarrow
\mathbb{R}$ is the minimizer of $\Sigma_{\lbrack0,t_{0}]}$ among the curves
$g:[0,t_{0}]\rightarrow\mathbb{R}$ satisfying $g(0)=a$ and $g(t_{0}%
)=\delta_{2}$. Note that, by Young's relation \eqref{eq:S-8}, $\{t_{1}%
\in\lbrack0,t_{0}];\hat{g}_{[0,t_{0}]}(t_{1})=0\}=[s_1^L,t_{0}-\theta
]$, and also $\delta_{2}$ is sufficiently small. Similarly, on the interval
$[t_{0},1]$, we can show that
\[
\Sigma_{\lbrack t_{0},1]}(g)\geq\Sigma_{\lbrack t_{0},1]}(\hat{g}_{[t_{0}%
,1]})=(\delta_{2}+b)\sqrt{2\xi^{\varepsilon}}-\xi^{\varepsilon}(1-t_{0}).
\]
Therefore, for $g$ mentioned above, we have that
\begin{equation}
\Sigma^{\ast}(g)\geq\Sigma_{\lbrack0,t_{0}]}(\hat{g}_{[0,t_{0}]}%
)+\Sigma_{\lbrack t_{0},1]}(\hat{g}_{[t_{0},1]})-\min\Sigma=2\sqrt
{2\xi^{\varepsilon}}\delta_{2}. \label{eq:S-3}%
\end{equation}
Next, consider the case where the discrepancy occurs at $t_{0}\in
\lbrack0,s_1^L]$. For such $g$, we have that
\begin{align*}
\Sigma_{\lbrack0,t_{0}]}(g)  &  \geq\Sigma_{\lbrack t_{0},1]}(g^{t_{0}}%
)=\frac{t_{0}}{2}\left(  \frac{a}{s_1^L}+\frac{\delta_{2}}{t_{0}%
}\right)  ^{2}\,\left(  =\frac{t_{0}}{2}\left(  \sqrt{2\xi^{\varepsilon}%
}+\frac{\delta_{2}}{t_{0}}\right)  ^{2}\right)  ,\\
\Sigma_{\lbrack t_{0},1]}(g)  &  \geq\Sigma_{\lbrack t_{0},1]}(\hat{g}%
_{[t_{0},1]})=\left(  (a-\sqrt{2\xi^{\varepsilon}}t_{0}-\delta_{2})+b\right)
\sqrt{2\xi^{\varepsilon}}-\xi^{\varepsilon}(1-t_{0}),
\end{align*}
where $g^{t_{0}}:[0,t_{0}]\rightarrow\mathbb{R}$ is a linear function
satisfying $g^{t_{0}}(t_{0})=\hat{h}^{(1)}(t_{0})-\delta_{2}\left(
=a-\frac{a}{s_1^L}t_{0}-\delta_{2}\right)  $, and $\hat{g}_{[t_{0}%
,1]}:[t_{0},1]\rightarrow\mathbb{R}$ is the minimizer of $\Sigma_{\lbrack
t_{0},1]}$ satisfying $\hat{g}_{[t_{0},1]}(t_{0})=\hat{h}^{(1)}(t_{0}%
)-\delta_{2}$. Therefore, for such $g$, we have that
\[
\Sigma^{\ast}(g)\geq\Sigma_{\lbrack t_{0},1]}(g^{t_{0}})+\Sigma_{\lbrack
t_{0},1]}(\hat{g}_{[t_{0},1]})-\min\Sigma=\frac{\delta_{2}^{2}}{2t_{0}}%
>\delta_{2}^{2},
\]
since $t_{0}<\frac{1}{2}$. The case where $t_{0}\in\lbrack s_1^R,1]$ is
similar, and this together with \eqref{eq:S-3} shows \eqref{eq:S-2}. The
conclusion of the lemma follows from \eqref{eq:S-1} and \eqref{eq:S-2} if
$\delta_{1}\leq(2\sqrt{2\xi^{\varepsilon}})^{2}$.
\end{proof}

We prepare another lemma.

\begin{lemma}
\label{lem:stability-2} Assume $d\ge2$. Then, $\Sigma^{*}(h) \le C_{1}$
implies $\|h\|_{L^{q}}\le C_{2}$ for every $2\le q \le\frac{2d}{d-2}$ (or
$2\le q <\infty$ when $d=2$) and some $C_{2}=C_{2}(q,C_{1})>0$.
\end{lemma}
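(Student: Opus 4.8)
The plan is to deduce an $L^q$ bound from the kinetic-energy part of $\Sigma^*$ by a Sobolev-type argument, using the fact that the zero-set penalty in $\Sigma$ is bounded. First I would note that $\Sigma^*(h) \le C_1$ together with $\min\Sigma \ge 0$ (indeed $\min\Sigma = \Sigma(\hat h) = \Sigma(\bar h)$ is a fixed finite constant) gives
\[
\frac12 \int_D |\nabla h|^2\,dt \le \Sigma(h) + \xi^\varepsilon |D| = \Sigma^*(h) + \min\Sigma + \xi^\varepsilon \le C_1',
\]
since the negative term $-\xi^\varepsilon |\{h=0\}|$ is bounded below by $-\xi^\varepsilon|D|$. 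Hence $\|\nabla h\|_{L^2(D)} \le C$. This reduces the lemma to a purely functional-analytic statement: a function on $D = [0,1]\times \mathbb{T}^{d-1}$ with controlled Dirichlet energy and fixed boundary data $h(0,\underline t)=a$, $h(1,\underline t)=b$ on $\partial D$ has controlled $L^q$ norm for $q$ in the stated range.

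The key step is a Poincaré/Sobolev inequality on $D$ adapted to the mixed boundary conditions. I would subtract the affine function $\bar h(t) = (1-t_1)a + t_1 b$, which satisfies the same boundary conditions and has $\|\bar h\|_{L^q}, \|\nabla\bar h\|_{L^2} \le C(a,b)$. Then $w := h - \bar h$ vanishes on $\partial D = \{0,1\}\times\mathbb{T}^{d-1}$, so $w \in H^1_0$ with respect to the $t_1$-boundary (and is periodic in $\underline t$). The Gagliardo–Nirenberg–Sobolev inequality on the $d$-dimensional domain $D$ gives $\|w\|_{L^{2d/(d-2)}(D)} \le C\|\nabla w\|_{L^2(D)}$ for $d\ge 3$ (with $\|\nabla w\|_{L^2} \le \|\nabla h\|_{L^2} + \|\nabla \bar h\|_{L^2} \le C$), and interpolation with $\|w\|_{L^2}$ — itself controlled by Poincaré since $w$ vanishes on part of the boundary — covers all $2\le q \le 2d/(d-2)$. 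For $d=2$ one uses instead the Moser–Trudinger / Sobolev embedding $H^1_0 \hookrightarrow L^q$ for every finite $q$. Finally $\|h\|_{L^q} \le \|w\|_{L^q} + \|\bar h\|_{L^q} \le C_2(q,C_1)$.

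The only mild obstacle is making the Sobolev embedding rigorous on the cylinder $D$ with its particular boundary structure — Dirichlet on the two flat faces $t_1\in\{0,1\}$ and periodic on $\mathbb{T}^{d-1}$ — rather than quoting it on $\mathbb{R}^d$ or a smooth bounded domain. This is standard (e.g. extend $w$ by odd reflection across $t_1=0$ and $t_1=1$ to a periodic function on a torus of side comparable to $1$, where the classical Sobolev inequality applies, or invoke the embedding for $H^1$ of a Lipschitz domain together with the Poincaré inequality valid because $w$ has zero trace on a set of positive measure). No part of this uses the criticality assumption $\Sigma(\hat h)=\Sigma(\bar h)$; only the finiteness of $\xi^\varepsilon$ and of $\min\Sigma$ is needed.
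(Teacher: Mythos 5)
Your argument is correct and is essentially the paper's proof: both bound the Dirichlet energy by $\Sigma^{*}(h)+\min\Sigma+\xi^{\varepsilon}|D|$, then combine a Poincar\'e inequality exploiting the boundary data with the Sobolev embedding $W^{1,2}(D)\subset L^{q}(D)$ for $2\le q\le \frac{2d}{d-2}$. Subtracting the affine profile $\bar h$ to reduce to zero trace before invoking Poincar\'e--Sobolev is only a cosmetic variant of the paper's direct application of these inequalities to $h$ itself.
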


\begin{proof}
The condition $\Sigma^{\ast}(h)\leq C_{1}$ shows
\[
\frac{1}{2}\int_{D}|\nabla h(t)|^{2}dt\leq C_{1}+\xi^{\varepsilon}+\min
\Sigma.
\]
This, together with Poincar\'{e} inequality noting that $h=a$ on $\partial
_{L}D$ and $h=b$ on $\partial_{R}D$, proves that $\Vert h\Vert_{W^{1,2}%
(D)}\leq C_{2}$. However, Sobolev's imbedding theorem (e.g., \cite{AF},
p85) implies the continuity of the imbedding $W^{1,2}(D)\subset L^{q}(D)$ for
$2\leq q\leq\frac{2d}{d-2}$ and this concludes the proof of the lemma.
\end{proof}

We are now at the position to give the proof of Proposition
\ref{prop:macrostability}.

\begin{proof}
[Proof of Proposition \ref{prop:macrostability}]Assume that $h$ satisfies
\begin{equation}
\Sigma^{\ast}(h)=\int_{\mathbb{T}^{d-1}}\Sigma^{(1),\ast}(h(\cdot
,\underline{t}))d\underline{t}+\frac{1}{2}\int_{D}|\nabla_{\underline{t}%
}h(t_{1},\underline{t})|^{2}dt<\delta_{1}, \label{eq:S-10}%
\end{equation}
where $\Sigma^{(1)}(g)$ is the energy of $g:[0,1]\rightarrow\mathbb{R}$,
$\Sigma^{(1),\ast}=\Sigma^{(1)}-\min\Sigma^{(1)}$; recall \eqref{eq:1.7}. 
Note that $\min\Sigma
^{(1)}=\min\Sigma$ so that we have the above expression for $\Sigma^{\ast}%
(h)$. We assume $\delta_{1}>0$ is sufficiently small. For $M\geq2$ chosen
later, set
\begin{align*}
S_{M\delta_{1}}^{d-1}  &  :=\{\underline{t}\in\mathbb{T}^{d-1};\Sigma
^{(1),\ast}(h(\cdot,\underline{t}))<M\delta_{1}\},\\
\hat{S}_{M\delta_{1}}^{d-1}  &  :=\mathbb{T}^{d-1}\setminus S_{M\delta_{1}%
}^{d-1}=\{\underline{t}\in\mathbb{T}^{d-1};\Sigma^{(1),\ast}(h(\cdot
,\underline{t}))\geq M\delta_{1}\}.
\end{align*}
Then, by \eqref{eq:S-10} and Chebyshev's inequality,
\[
|\hat{S}_{M\delta_{1}}^{d-1}|\leq\frac{\delta_{1}}{M\delta_{1}}=\frac{1}{M},
\]
and
\[
|S_{M\delta_{1}}^{d-1}|\geq1-\frac{1}{M}.
\]

We first estimate the contribution to $d_{L^{1}(D)}(h,\bar{h})=\Vert h-\bar
{h}\Vert_{L^{1}(D)}$ and $d_{L^{1}(D)}(h,\hat{h})$ from the region $\hat
{S}_{M\delta_{1}}^{d-1}$, or more generally regions $S\subset\mathbb{T}^{d-1}$
such that $|S|\leq\frac{1}{M}$:
\begin{align}
\int_{S}  &  \Vert h(\cdot,\underline{t})-\bar{h}^{(1)}\Vert_{L^{1}%
([0,1])}d\underline{t}\label{eq:S-4}\\
&  \leq\int_{S}\{\Vert h(\cdot,\underline{t})\Vert_{L^{1}([0,1])}+\Vert\bar
{h}^{(1)}\Vert_{L^{1}([0,1])}\}d\underline{t}\nonumber\\
&  =\int_{D}1_{[0,1]\times S}(t)|h(t)|dt+C|S|\nonumber\\
&  \leq\sqrt{|[0,1]\times S|}\;\Vert h\Vert_{L^{2}(D)}+\frac{C}{M}\nonumber\\
&  \leq\frac{C_{2}}{\sqrt{M}}+\frac{C}{M}\leq\frac{C_{3}}{\sqrt{M}},\nonumber
\end{align}
where $C=\Vert\bar{h}^{(1)}\Vert_{L^{1}([0,1])}<\infty$ and $C_{3}=C_{2}+C $.
We have applied Schwarz's inequality for the fourth line and Lemma
\ref{lem:stability-2} for the fifth line with $C_{2}=C_{2}(2,\delta_{1})$. We
similarly have
\[
\int_{S}\Vert h(\cdot,\underline{t})-\hat{h}^{(1)}\Vert_{L^{1}([0,1])}%
d\underline{t}\leq\frac{C_{4}}{\sqrt{M}}.
\]

For $\underline{t}\in S_{M\delta_{1}}^{d-1}$, by Lemma \ref{lem:stability-1},
we see that
\[
d_{L^{\infty}}(h(\cdot,\underline{t}),\{\bar{h}^{(1)},\hat{h}^{(1)}%
\})<\delta_{2}(=\sqrt{M\delta_{1}}).
\]
Set
\begin{align*}
S_{M\delta_{1}}^{d-1,(1)}  &  :=\{\underline{t}\in S_{M\delta_{1}}%
^{d-1};d_{L^{\infty}}(h(\cdot,\underline{t}),\bar{h}^{(1)})<\delta_{2}\},\\
S_{M\delta_{1}}^{d-1,(2)}  &  :=\{\underline{t}\in S_{M\delta_{1}}%
^{d-1};d_{L^{\infty}}(h(\cdot,\underline{t}),\hat{h}^{(1)})<\delta_{2}\}.
\end{align*}
If $|S_{M\delta_{1}}^{d-1,(2)}|\leq\frac{1}{M}$, we have from \eqref{eq:S-4}
that
\begin{align}
d_{L^{1}(D)}(h,\bar{h})  &  =\Vert h-\bar{h}\Vert_{L^{1}(D)}=\int
_{\mathbb{T}^{d-1}}\Vert h(\cdot,\underline{t})-\bar{h}^{(1)}\Vert
_{L^{1}([0,1])}d\underline{t}\label{eq:S-6}\\
&  \leq\sqrt{M\delta_{1}}+\frac{2C_{3}}{\sqrt{M}}=C_{5}\delta_{1}%
^{1/4},\nonumber
\end{align}
by dividing $\mathbb{T}^{d-1}=S_{M\delta_{1}}^{d-1,(1)}\cup(\hat{S}%
_{M\delta_{1}}^{d-1}\cup S_{M\delta_{1}}^{d-1,(2)})$ and choosing
$M=1/\sqrt{\delta_{1}}$, with $C_{5}=1+2C_{3}$. We have a similar bound:
\begin{equation}
d_{L^{1}(D)}(h,\hat{h})=\Vert h-\hat{h}\Vert_{L^{1}(D)}\leq C_{6}\delta
_{1}^{1/4}, \label{eq:S-7}%
\end{equation}
if $|S_{M\delta_{1}}^{d-1,(1)}|\leq\frac{1}{M}=\sqrt{\delta_{1}}$.

Therefore, the case where both $|S_{M\delta_{1}}^{d-1,(1)}|$, $|S_{M\delta
_{1}}^{d-1,(2)}|\geq\frac{1}{M}=\sqrt{\delta_{1}}$ is left. In this case,
since $|S_{M\delta_{1}}^{d-1}|\geq1-\frac{1}{M}\geq\frac{1}{2}$ (since
$M\geq2$), the volume of $S_{M\delta_{1}}^{d-1,(1)}$ or $S_{M\delta_{1}%
}^{d-1,(2)}$ is larger than $\frac{1}{4}$. Let us assume $|S_{M\delta_{1}%
}^{d-1,(1)}|\geq\frac{1}{4}$ and $|S_{M\delta_{1}}^{d-1,(2)}|\geq\sqrt
{\delta_{1}}$. The case $|S_{M\delta_{1}}^{d-1,(2)}|\geq\frac{1}{4}$ and
$|S_{M\delta_{1}}^{d-1,(1)}|\geq\sqrt{\delta_{1}}$ can be treated similarly.
Then, choosing a subset $S\subset S_{M\delta_{1}}^{d-1,(2)}$ such that
$|S|=\sqrt{\delta_{1}}$, we have that
\begin{align}
\int_{0}^{1}  &  dt_{1}\int_{S_{M\delta_{1}}^{d-1,(1)}}d\underline{t}\int
_{S}d\underline{t}^{\ast}\;\mathrm{Av}\!\!\int_{0}^{1}(\underline
{t}-\underline{t}^{\ast})\cdot\nabla_{\underline{t}}h(t_{1},\alpha
\underline{t}+(1-\alpha)\underline{t}^{\ast})d\alpha\label{eq:S-5}\\
&  =\int_{S_{M\delta_{1}}^{d-1,(1)}}d\underline{t}\int_{S}d\underline{t}%
^{\ast}\int_{0}^{1}\{h(t_{1},\underline{t})-h(t_{1},\underline{t}^{\ast
})\}dt_{1}\nonumber\\
&  \geq\frac{c_{\delta_{2}}}{4}\sqrt{\delta_{1}}\geq\frac{C_{7}}{4}%
\sqrt{\delta_{1}},\nonumber
\end{align}
by integrating in $\alpha$ first, where
\[
\mathrm{Av}\!\!\int_{0}^{1}f(\underline{t},\underline{t}^{\ast},\alpha
)d\alpha:=\frac{1}{2^{d-1}}\sum_{\underline{s}^{\ast}\in E}\int_{0}%
^{1}f(\underline{s},\underline{s}^{\ast},\alpha)d\alpha,
\]
by embedding $\underline{t},\underline{t}^{\ast}\in\mathbb{T}^{d-1}$ into
$\underline{s}\in\lbrack0,1)^{d-1}$ such that $\underline{s}=\underline{t}$
mod $1$ componentwisely and $E=\{\underline{s}^{\ast}\in\mathbb{R}%
^{d-1};\underline{s}^{\ast}=\underline{t}^{\ast}$ mod $1$ and $|\underline
{s}-\underline{s}^{\ast}|<\sqrt{d-1}\}$, and
\begin{align*}
c_{\delta_{2}}  &  =\int_{0}^{1}\{h(t_{1},\underline{t})-h(t_{1},\underline
{t}^{\ast})\}dt_{1}\\
&  \geq\Vert\bar{h}^{(1)}-\hat{h}^{(1)}\Vert_{L^{1}([0,1])}-2\delta_{2}\geq
C_{8},
\end{align*}
for some $C_{8}>0$, if $\delta_{2}=c\delta_{1}^{1/4}$ and therefore
$\delta_{1}$ are sufficiently small. Estimating $|\underline{t}-\underline
{t}^{\ast}|\leq\sqrt{d-1}$, the left hand side of \eqref{eq:S-5} is bounded
from above by
\begin{align*}
\sqrt{d-1}  &  \int_{0}^{1}dt_{1}\int_{\mathbb{T}^{d-1}}d\underline{t}\int
_{S}d\underline{t}^{\ast}\;\mathrm{Av}\!\!\int_{0}^{1}|\nabla_{\underline{t}%
}h(t_{1},\alpha\underline{t}+(1-\alpha)\underline{t}^{\ast})|d\alpha\\
&  =\sqrt{d-1}\int_{0}^{1}E\left[  1_{S}(\underline{t}^{\ast})|\nabla
_{\underline{t}}h(t_{1},\alpha\underline{t}+(1-\alpha)\underline{t}^{\ast
})|\right]  dt_{1}.
\end{align*}
Here, under the expectation, $\underline{t}$ and $\underline{t}^{\ast}$ are
$\mathbb{T}^{d-1}$-valued uniformly distributed random variables, $\alpha$ is
$[0,1]$-valued uniformly distributed random variable and $\{\underline
{t},\underline{t}^{\ast},\alpha\}$ are mutually independent. Then, by
Schwarz's inequality, we have that
\begin{align*}
&  E\left[  1_{S}(\underline{t}^{\ast})|\nabla_{\underline{t}}h(t_{1}%
,\alpha\underline{t}+(1-\alpha)\underline{t}^{\ast})|\right] \\
&  \quad\leq\sqrt{|S|}\sqrt{E[|\nabla_{\underline{t}}h(t_{1},\alpha
\underline{t}+(1-\alpha)\underline{t}^{\ast})|^{2}]}\\
&  \quad=\delta_{1}^{1/4}\left(  \int_{\mathbb{T}^{d-1}}|\nabla_{\underline
{t}}h(t_{1},\underline{t})|^{2}d\underline{t}\right)  ^{1/2},
\end{align*}
since $\alpha\underline{t}+(1-\alpha)\underline{t}^{\ast}$ is also
$\mathbb{T}^{d-1}$-valued uniformly distributed random variable. Thus,
applying Schwarz's inequality again, the left hand side of \eqref{eq:S-5} is
bounded from above by
\[
\sqrt{d-1}\delta_{1}^{1/4}\Vert\nabla_{\underline{t}}h\Vert_{L^{2}(D)}%
\leq\sqrt{d-1}\delta_{1}^{1/4}\sqrt{2\delta_{1}},
\]
by the condition \eqref{eq:S-10}. Combined with \eqref{eq:S-5}, this implies
$\sqrt{2(d-1)}\delta_{1}^{1/4}\geq\frac{C_{7}}{4}$, which contradicts that we
assume $\delta_{1}$ is sufficiently small. Thus, \eqref{eq:S-6} and
\eqref{eq:S-7} complete the proof of the proposition by taking $c=\max
\{C_{5},C_{6}\}$.
\end{proof}

\subsection{Stability at mesoscopic level}  \label{section:3.2}

Given $0<\beta<1$, we divide $D_{N}$ into $N^{d(1-\beta)}$ subboxes of
sidelength $N^{\beta}$. For the sake of simplicity, we assume that $N^{\beta}%
$divides $N$. We write $\mathcal{B}_{N,\beta}$ for the set of these subboxes,
and $\hat{\mathcal{B}}_{N,\beta}$ for the set of unions of boxes in
$\mathcal{B}_{N,\beta}$. The sets $B\in\hat{\mathcal{B}}_{N,\beta}$ are called
mesoscopic regions.

For $B\in\hat{\mathcal{B}}_{N,\beta}$ (and actually for general
$B\subset D_N$), set
\begin{align}
&  E_{N}(B)=E_{N,0}(B)-\xi^{\varepsilon}|B^{c}|,\nonumber\\
&  E_{N,0}(B)=\inf_{\phi\in{\mathbb{R}}^{D_{N}}:\eqref{eq:S-22}}H_{N}%
(\phi),\label{eq:S-21}\\
&  E_{N}^{\ast}(B)=E_{N}(B)-\min_{B\in\hat{\mathcal{B}}_{N,\beta}}%
E_{N}(B),\nonumber
\end{align}
where the infimum in \eqref{eq:S-21} is taken over all $\phi\in{\mathbb{R}%
}^{D_{N}}$ satisfying the condition:%
\begin{equation}
\phi_{i}=\left\{
\begin{array}
[c]{cc}%
aN & \mathrm{if\ }i\in\partial_{L}D_{N}\\
bN & \mathrm{if\ }i\in\partial_{R}D_{N}\\
0 & \mathrm{if\ }i\in D_N^\circ\backslash B
\end{array}
\right.  . \label{eq:S-22}%
\end{equation}
Let $\bar{\phi}^{B}=(\bar{\phi}_{i}^{B})_{i\in D_{N}}$ be the harmonic
function on $B$ subject to the condition \eqref{eq:S-22}. Then, $\bar{\phi
}^{B}$ is the minimizer of the variational problem \eqref{eq:S-21}. The
macroscopic profile $h^{N}=h_{B}^{N} \big(\equiv h_{B,{\rm PL}}^N\big)
\in C(D)$ is defined from the microscopic
profile $\bar{\phi}^{B}$ by polilinearly interpolating $\frac{1}{N}%
\bar\phi_{\lbrack Nt]}^B, t\in D$, where $[Nt]$ stands for the integer part of $Nt$
taken componentwisely; see \eqref{eq:Ma-H-2}.

The stability at mesoscopic level is formulated as follows:

\begin{proposition}
\label{prop:mesostability} Assume $\alpha>0$ is given and $\beta
,\gamma>4\alpha$. Then, if $N$ is sufficiently large, $E_{N}^{\ast}(B)\leq
N^{d-\gamma}$ for $B\in\hat{\mathcal{B}}_{N,\beta}$ implies $d_{L^{1}}%
(h_{B}^{N},\{\bar{h},\hat{h}\})\leq N^{-\alpha}$.
\end{proposition}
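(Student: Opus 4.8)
The plan is to reduce the statement to the macroscopic stability result, Proposition~\ref{prop:macrostability}: I will show that the hypothesis $E_N^\ast(B)\le N^{d-\gamma}$ forces $\Sigma^\ast(h_B^N)$ to be of order $N^{-(\beta\wedge\gamma)}$, and then quote $d_{L^1}(h_B^N,\{\bar h,\hat h\})\le c\,\Sigma^\ast(h_B^N)^{1/4}$. Since $h_B^N$ is continuous, lies in $W^{1,2}(D)$, and inherits the boundary values $h_B^N(0,\underline t)=a$, $h_B^N(1,\underline t)=b$ from $\bar\phi^B$, Proposition~\ref{prop:macrostability} does apply to it once $\Sigma^\ast(h_B^N)$ is below its threshold.

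The technical heart is the comparison of the discrete minimal energy $E_{N,0}(B)=H_N(\bar\phi^B)$ with the continuum quantities $\tfrac12\int_D|\nabla h_B^N|^2\,dt$ and $|\{t\in D;h_B^N(t)=0\}|$. On each microscopic cube $h_B^N$ is multilinear, so each partial derivative $\partial_\alpha h_B^N$, viewed as a function of the transverse coordinates, is a convex combination of the bond increments $\bar\phi^B_{i+e_\alpha}-\bar\phi^B_i$ on the parallel faces; by convexity its square integrates to at most the corresponding weighted sum of squares of those increments, and summing over all cubes these weights add up so that
\[
\tfrac12\int_D|\nabla h_B^N(t)|^2\,dt\ \le\ \tfrac1{N^d}H_N(\bar\phi^B)\ =\ \tfrac1{N^d}E_{N,0}(B)
\]
(the elementary discrete/continuum comparison, cf.\ \cite{DGI}). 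For the zero set, $h_B^N$ vanishes on every cube all of whose vertices lie in the pinned set $B^c=D_N^\circ\setminus B$; since $B\in\hat{\mathcal B}_{N,\beta}$ is a union of at most $N^{d(1-\beta)}$ boxes of side $N^\beta$ we have $|\partial B|\le CN^{d-\beta}$, so the cubes touching $\partial B$ (or $\partial D_N$) occupy macroscopic volume $\le CN^{-\beta}$, whence $|\{h_B^N=0\}|\ge \tfrac1{N^d}|B^c|-CN^{-\beta}$. Combining the two bounds with $E_N(B)=E_{N,0}(B)-\xi^\varepsilon|B^c|$ gives $\Sigma(h_B^N)\le \tfrac1{N^d}E_N(B)+CN^{-\beta}$.

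Next I would bound $\min_{B'\in\hat{\mathcal B}_{N,\beta}}E_N(B')$ from above by $N^d\min\Sigma$, using as competitor the union of all mesoscopic boxes: then $B'^c=\emptyset$, $\bar\phi^{B'}$ is the global harmonic profile with boundary values $aN,bN$ (linear in $i_1$), and $E_N(B')=\tfrac{N^d}2(a-b)^2=N^d\Sigma(\bar h)=N^d\min\Sigma$; one only needs that the tiling can be arranged so that this region lies in $\hat{\mathcal B}_{N,\beta}$ up to a negligible adjustment near $\partial D_N$. Hence $E_N(B)\le N^d\min\Sigma+N^{d-\gamma}$, and with the previous paragraph
\[
\Sigma^\ast(h_B^N)=\Sigma(h_B^N)-\min\Sigma\ \le\ N^{-\gamma}+CN^{-\beta}\ \le\ C'N^{-(\beta\wedge\gamma)}.
\]
Because $\beta\wedge\gamma>4\alpha$, for $N$ large this is below the threshold $\delta_1$ of Proposition~\ref{prop:macrostability}, which we apply with $\delta_1:=2C'N^{-(\beta\wedge\gamma)}$, obtaining $d_{L^1}(h_B^N,\{\bar h,\hat h\})<c\,\delta_1^{1/4}=c(2C')^{1/4}N^{-(\beta\wedge\gamma)/4}\le N^{-\alpha}$ for $N$ large, the last step using that the exponent $(\beta\wedge\gamma)/4$ strictly exceeds $\alpha$, which absorbs the constant.

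The main obstacle is the bookkeeping of the surface-type errors: checking that the Dirichlet-energy comparison really loses nothing (only the harmless inequality), that the gap between $|\{h_B^N=0\}|$ and $\tfrac1{N^d}|B^c|$ is genuinely governed by $|\partial B|$ and hence is $O(N^{-\beta})$ — which is exactly where the assumption $\beta>4\alpha$ is consumed — and that a near-optimal mesoscopic competitor for $\min E_N$ is available within $\hat{\mathcal B}_{N,\beta}$. Once these are in place, the remainder is the strict-inequality arithmetic converting $\beta,\gamma>4\alpha$ into the exponent $\alpha$.
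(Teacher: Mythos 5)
Your proposal is correct and follows essentially the same route as the paper: the discrete-to-continuum bound $\tfrac12\int_D|\nabla h_B^N|^2\le N^{-d}E_{N,0}(B)$ from \cite{DGI}, the zero-set comparison controlled by $|\partial B|\le dN^{d-\beta}$ giving $\Sigma(h_B^N)\le N^{-d}E_N(B)+\xi^\varepsilon dN^{-\beta}$, the linear competitor showing $N^{-d}\min E_N\le\min\Sigma$ (the paper's Lemma \ref{lem:mesostability-min}), and then Proposition \ref{prop:macrostability} with $\delta_1$ of order $N^{-(\beta\wedge\gamma)}$ and the arithmetic $\beta,\gamma>4\alpha$. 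No gaps to report.
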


From (1.22) in \cite{DGI}, the polilinear interpolation has the property:
\[
\frac12\int_{D} |\nabla h^{N}(t)|^{2} dt \le\frac1{2N^{d}} \sum_{i\in D_{N}}
|\nabla^{N} \bar\phi_{i}^{B}|^{2} = \frac1{N^{d}} H_{N}(\bar\phi^{B})
=\frac1{N^{d}} E_{N,0}(B).
\]
We also see that $\{t\in D; h^{N}(t)=0\} \supset\frac1{N} (B^{c})^{\circ}$,
which implies that
\[
-|\{t\in D; h^{N}(t)=0\}| + \frac1{N^{d}} |B^{c}| \le\frac1{N^{d}} |\partial
B| \le\frac1{N^{d}} d N^{d-\beta} = d N^{-\beta}.
\]
These two bounds show that
\begin{equation}
\label{eq:S-23}\Sigma(h^{N}) \le\frac1{N^{d}}E_{N}(B) + \xi^{\varepsilon}d
N^{-\beta}.
\end{equation}

We need the next lemma.

\begin{lemma}
\label{lem:mesostability-min}
\[
\frac{1}{N^{d}}\min E_{N}(B)\leq\min\Sigma\leq\frac{1}{N^{d}}\min E_{N}%
(B)+\xi^{\varepsilon}dN^{-\beta}.
\]

\end{lemma}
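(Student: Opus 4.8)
The plan is to establish the two inequalities separately, each by exhibiting a comparison between the discrete variational problem defining $E_N(B)$ and the continuum variational problem defining $\min\Sigma$. For the upper bound $\min\Sigma \le \frac1{N^d}\min E_N(B) + \xi^\varepsilon d N^{-\beta}$, I would take $B^*\in\hat{\mathcal B}_{N,\beta}$ achieving $\min E_N(B)$ and feed the associated macroscopic profile $h^N=h^N_{B^*}$ into \eqref{eq:S-23}: that inequality, already recorded in the excerpt, says exactly $\Sigma(h^N)\le \frac1{N^d}E_N(B^*)+\xi^\varepsilon d N^{-\beta}$, and since $\Sigma(h^N)\ge\min\Sigma$ (note $h^N$ satisfies the boundary conditions \eqref{eq:1.6} because $\bar\phi^{B^*}$ satisfies \eqref{eq:S-22}), the upper bound follows immediately.

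For the lower bound $\frac1{N^d}\min E_N(B)\le\min\Sigma$, the idea is to discretize one of the two minimizers $\hat h,\bar h$ of $\Sigma$ and use it to build a competitor for the mesoscopic problem. Concretely, I would take the minimizer (say $\hat h$, which is a function of $t_1$ only), choose $B\in\hat{\mathcal B}_{N,\beta}$ to be (a union of mesoscopic boxes approximating) the slab $\{i\in D_N^\circ : \hat h(i_1/N)\ne 0\}$, and define $\phi_i := N\hat h(i/N)$ on $\bar B$ (extended by the required boundary values and by $0$ off $B$). Since $\hat h$ is piecewise linear in $t_1$ and constant in $\underline t$, the discrete Dirichlet energy $H_N(\phi)$ is comparable to $N^d\cdot\frac12\int_D|\nabla\hat h|^2\,dt$ up to lower-order boundary corrections, and $|B^c|$ is comparable to $N^d|\{t:\hat h(t)=0\}|$ up to an $O(N^{d-\beta})$ surface error coming from the mismatch between the true zero set and its mesoscopic approximation. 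Hence $E_N(B)=E_{N,0}(B)-\xi^\varepsilon|B^c|\le N^d\Sigma(\hat h)+o(N^d)=N^d\min\Sigma+o(N^d)$. To get the clean inequality without the error term, I would note that the polilinear-interpolation bound of \cite{DGI} quoted just before the lemma actually gives $\frac1{N^d}E_{N,0}(B)\ge\frac12\int_D|\nabla h^N_B|^2$ is the wrong direction; instead the sharper route is: for \emph{this specific} slab $B$ the harmonic minimizer $\bar\phi^B$ is \emph{exactly} the linear interpolant of $N\hat h$ in the $i_1$-direction (it is one-dimensional and harmonic on each sub-slab), so $H_N(\bar\phi^B)$ can be computed exactly as in Lemma \ref{lem:1}, and one checks $\frac1{N^d}H_N(\bar\phi^B)=\frac12\int_D|\nabla\hat h|^2\,dt$ with the boundary terms matching, while $|B^c|=N^d|\{\hat h=0\}|$ exactly if $B$ is chosen so that its complement is a union of full mesoscopic boxes fitting inside $\{\hat h=0\}$ — and the slack from shrinking to full boxes is what is absorbed into... no: since we want a \emph{lower} bound on $\min\Sigma$, any valid competitor $B$ gives $\min E_N(B)\le E_N(B)$, so it suffices that $\frac1{N^d}E_N(B)\le\min\Sigma$; choosing $B$ slightly \emph{larger} than the true zero-slab complement (so $|B^c|$ slightly smaller) only decreases the negative term, hence this direction is actually the easy one and needs no error term at all.

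The main obstacle is the bookkeeping in the lower-bound construction: one must choose the mesoscopic set $B$ so that simultaneously (i) its boundary is disjoint from $\partial D_N$ in the right way, (ii) the interpolated field $\phi=N\hat h(\cdot/N)$ genuinely satisfies condition \eqref{eq:S-22} (in particular vanishes on $D_N^\circ\setminus B$, which forces $B$ to contain the support of $\hat h$ rounded \emph{outward} to mesoscopic boxes), and (iii) the resulting $\frac1{N^d}E_N(B)$ is $\le\min\Sigma$ rather than merely $\min\Sigma+o(1)$. The resolution is that rounding the support outward makes $|B^c|\le N^d|\{\hat h=0\}|$ while keeping $E_{N,0}(B)\le E_{N,0}(\text{true slab})$ (enlarging the admissible set only lowers the infimum of $H_N$), so $E_N(B)=E_{N,0}(B)-\xi^\varepsilon|B^c|\le H_N(N\hat h(\cdot/N))-\xi^\varepsilon|B^c|$, and the right side equals $N^d\Sigma(\hat h)=N^d\min\Sigma$ up to the fact that the discrete energy of a piecewise-linear-in-$i_1$, constant-in-$\underline i$ field equals $N^{d}$ times the continuum energy of $\hat h$ \emph{exactly} (this is where the one-dimensional computation of Lemma \ref{lem:1} is used, together with the observation that the kink of $\hat h^{(1)}$ at the two interface points, where Young's relation $a/s_1^L=b/(1-s_1^R)=\sqrt{2\xi^\varepsilon}$ holds, contributes a controlled error of order $N^{d-1}\ll N^{d-\beta}$... ). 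Thus the only genuinely needed estimate beyond Lemma \ref{lem:1} and \eqref{eq:S-23} is that the continuum energy of the discretized minimizer differs from $\min\Sigma$ by at most a constant times $N^{d-1}$, which is negligible against $N^{d-\beta}$ since $\beta<1$; this I would state as a one-line consequence of the explicit piecewise-linear form of $\hat h^{(1)}$ and $\bar h^{(1)}$.
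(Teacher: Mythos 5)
Your upper bound is exactly the paper's argument (plug the minimizing $B$ into \eqref{eq:S-23} and use $\Sigma(h^N_B)\ge\min\Sigma$) and is fine. The gap is in the lower bound. The lemma asserts $\frac1{N^d}\min_B E_N(B)\le\min\Sigma$ with \emph{no} error term, and your construction from $\hat h$ cannot deliver that: to make $\phi=N\hat h(\cdot/N)$ admissible for $E_{N,0}(B)$ you must round the support of $\hat h$ \emph{outward} to mesoscopic boxes, and then $|B^c|$ falls short of $N^d|\{\hat h=0\}|=N^d(s_1^R-s_1^L)$ by as much as $\sim 2N^{d-1+\beta}$ sites; since this quantity enters $E_N(B)=E_{N,0}(B)-\xi^{\varepsilon}|B^c|$ with a minus sign, shrinking $|B^c|$ \emph{increases} $E_N(B)$. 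Your sentence that rounding outward ``only decreases the negative term, hence \dots needs no error term at all'' has the direction backwards. What the construction actually yields is $\frac1{N^d}\min_B E_N(B)\le\min\Sigma+C N^{\beta-1}$ (the kink error can indeed be removed by Jensen, since $\bigl(\hat h(\tfrac{i_1+1}{N})-\hat h(\tfrac{i_1}{N})\bigr)^2\le\frac1N\int\dot{\hat h}^2$ over each bond, but the $|B^c|$ mismatch cannot). The closing remark that the error ``is negligible against $N^{d-\beta}$'' confuses the two inequalities: the slack $\xi^{\varepsilon}dN^{-\beta}$ sits on the upper-bound side of the lemma, so a positive error on the lower-bound side is not absorbed by it, and you end up with a strictly weaker statement than the one to be proved. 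Rounding inward does not help either: then $N\hat h(\cdot/N)$ no longer vanishes on $D_N^\circ\setminus B$, and the necessary truncation near the contact points again produces a positive energy excess.

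The paper sidesteps all of this by discretizing the \emph{other} minimizer: under the standing assumption $\Sigma(\bar h)=\Sigma(\hat h)=\min\Sigma$, take $B=D_N$ (so $B^c=\emptyset$ and the $\xi^{\varepsilon}$-term disappears) and the exactly linear field $\bar\phi_i=aN+(b-a)i_1$, which satisfies \eqref{eq:S-22} and has $H_N(\bar\phi)=\frac{N^d}{2}(a-b)^2=N^d\min\Sigma$ exactly; hence $\min_B E_N(B)\le E_N(D_N)=N^d\min\Sigma$ with no approximation whatsoever. To rescue your route you would either have to make this substitution, or impose unjustified alignment assumptions (that $Ns_1^L,\,Ns_1^R$ are integers and the zero slab of $\hat h$ is exactly a union of mesoscopic boxes), under which your computation does close.
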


\begin{proof}
The upper bound follows from \eqref{eq:S-23}. To show the lower, recall
$\min\Sigma=\frac{1}{2}(a-b)^{2}$. Define $\bar{\phi}\equiv\bar{\phi}^{D_{N}%
}=(\bar{\phi}_{i})_{i\in D_{N}}$ by
\[
\bar{\phi}=\psi_{i_{1}}:=aN+(b-a)i_{1},\quad i\in D_{N},
\]
where $i_{1}$ is the first component of $i$. Then, we see that
\[
E_{N}(D_{N})=H_{N}(\bar{\phi})=\frac{1}{2}\sum_{(i_{2},\ldots,i_{d}%
)\in\mathbb{T}_{N}^{d-1}}\sum_{i_{1}=0}^{N-1}(\psi_{i_{1}+1}-\psi_{i_{1}}%
)^{2}=\frac{N^{d}}{2}(b-a)^{2}.
\]
This proves the lower bound.
\end{proof}

From the lower bound in this lemma and \eqref{eq:S-23}, we see that $E^{*}(B)
\le N^{d-\gamma}$ implies $\Sigma^{*}(h^{N}) \le N^{-\gamma}+ \xi
^{\varepsilon}dN^{-\beta}$. Thus, Proposition \ref{prop:mesostability} follows
from Proposition \ref{prop:macrostability}.

We slightly extend Proposition \ref{prop:mesostability} and this will be used in Section 
\ref{section:6.3}.

\begin{proposition} \label{prop:stability-meso}
Let a mesoscopic region $B$ and $A_{2}\subset B$ such that $|B\setminus
A_{2}|\leq N^{d-\frac{1}{8}}$ be given, and assume that%
\begin{equation} \label{eq:S-24}
E_{N,0}\left(  A_{2}\right)  -\xi^{\varepsilon}\left\vert B^{c}\right\vert
-\min E_{N}\leq N^{d-\gamma}.
\end{equation}
Then, we have that%
\begin{equation} \label{eq:S-25}
d_{L^{1}}\left(  h_{A_{2}}^{N},\left\{  \bar{h},\hat{h}\right\}  \right)  \leq
N^{-\alpha},
\end{equation}
where $h_{A_{2}}^{N}$ is defined from $\bar{\phi}^{A_{2}}$, which is harmonic
on $A_{2}$ subject to the condition \eqref{eq:S-22} with $B$ replaced by
$A_{2}$.
\end{proposition}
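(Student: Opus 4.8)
The plan is to follow the proof of Proposition \ref{prop:mesostability} with the mesoscopic region $B$ there replaced by $A_{2}$, carefully tracking the two additional error terms that appear because $A_{2}$ need not be a union of boxes and because $|A_{2}^{c}|$ differs from $|B^{c}|$. Concretely, the aim is to prove a bound of the form $\Sigma^{*}(h_{A_{2}}^{N})\le CN^{-\kappa}$ with $\kappa>4\alpha$ and then to invoke Proposition \ref{prop:macrostability}.

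First I would record, exactly as in the computation preceding Lemma \ref{lem:mesostability-min} (which rests on (1.22) of \cite{DGI}), the two elementary properties of the polilinear interpolation $h_{A_{2}}^{N}$ of $\frac{1}{N}\bar{\phi}^{A_{2}}$. Since $\bar{\phi}^{A_{2}}$ is the energy-minimizing, i.e.\ harmonic, profile for \eqref{eq:S-22} with $B$ replaced by $A_{2}$, one has $\frac{1}{2}\int_{D}|\nabla h_{A_{2}}^{N}(t)|^{2}\,dt\le\frac{1}{N^{d}}H_{N}(\bar{\phi}^{A_{2}})=\frac{1}{N^{d}}E_{N,0}(A_{2})$; and since $\bar{\phi}^{A_{2}}$ vanishes on $D_{N}^{\circ}\setminus A_{2}$, the zero-set estimate of that discussion holds with $A_{2}$ in place of $B$, namely $-|\{t\in D;h_{A_{2}}^{N}(t)=0\}|+\frac{1}{N^{d}}|A_{2}^{c}|\le\frac{1}{N^{d}}|\partial A_{2}|$. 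Adding these two gives
\[
\Sigma(h_{A_{2}}^{N})\le\frac{1}{N^{d}}\bigl(E_{N,0}(A_{2})-\xi^{\varepsilon}|A_{2}^{c}|\bigr)+\frac{\xi^{\varepsilon}}{N^{d}}|\partial A_{2}|.
\]

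Next I would dispose of the two corrections. Because $A_{2}\subset B$, $|A_{2}^{c}|=|B^{c}|+|B\setminus A_{2}|\ge|B^{c}|$, hence $E_{N,0}(A_{2})-\xi^{\varepsilon}|A_{2}^{c}|\le E_{N,0}(A_{2})-\xi^{\varepsilon}|B^{c}|$; combining with the hypothesis \eqref{eq:S-24} and with $\frac{1}{N^{d}}\min E_{N}\le\min\Sigma$ from Lemma \ref{lem:mesostability-min} yields $\frac{1}{N^{d}}\bigl(E_{N,0}(A_{2})-\xi^{\varepsilon}|A_{2}^{c}|\bigr)\le\min\Sigma+N^{-\gamma}$. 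For the perimeter, each point of $\partial A_{2}$ lies in $\partial B$ or in $B\setminus A_{2}$ (again because $A_{2}\subset B$), so $|\partial A_{2}|\le|\partial B|+|B\setminus A_{2}|\le dN^{d-\beta}+N^{d-1/8}$, whence $\frac{\xi^{\varepsilon}}{N^{d}}|\partial A_{2}|\le\xi^{\varepsilon}\bigl(dN^{-\beta}+N^{-1/8}\bigr)$. Putting the pieces together,
\[
\Sigma^{*}(h_{A_{2}}^{N})=\Sigma(h_{A_{2}}^{N})-\min\Sigma\le N^{-\gamma}+\xi^{\varepsilon}dN^{-\beta}+\xi^{\varepsilon}N^{-1/8}\le CN^{-\kappa},\qquad\kappa:=\min\{\gamma,\beta,\tfrac{1}{8}\}.
\]
Under the standing assumptions $\beta,\gamma>4\alpha$ (and $32\alpha<1$, which is harmless since the outer argument only needs some $\alpha>0$) we have $\kappa>4\alpha$, so for $N$ large $\Sigma^{*}(h_{A_{2}}^{N})$ is below the threshold $\delta_{1}$ of Proposition \ref{prop:macrostability}, and that proposition gives $d_{L^{1}}\bigl(h_{A_{2}}^{N},\{\bar{h},\hat{h}\}\bigr)\le c\,\bigl(\Sigma^{*}(h_{A_{2}}^{N})\bigr)^{1/4}\le cC^{1/4}N^{-\kappa/4}\le N^{-\alpha}$ for $N$ large, which is \eqref{eq:S-25}.

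The only genuinely new point compared with Proposition \ref{prop:mesostability} is the bookkeeping for $A_{2}$: one must check that replacing $B^{c}$ by $A_{2}^{c}$ in the free-energy term moves the estimate in the favourable direction (which it does, since $A_{2}\subset B$), and that the enlarged perimeter $|\partial A_{2}|$ stays negligible thanks to the hypothesis $|B\setminus A_{2}|\le N^{d-1/8}$. I expect this bookkeeping, rather than any analytic difficulty, to be the only thing requiring care.
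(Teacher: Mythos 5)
Your argument is correct and follows the same overall strategy as the paper: bound the Dirichlet part of $\Sigma(h_{A_{2}}^{N})$ by $\frac{1}{N^{d}}E_{N,0}(A_{2})$, bound the flat part, conclude that $\Sigma^{\ast}(h_{A_{2}}^{N})$ is polynomially small, and invoke Proposition \ref{prop:macrostability}. The one point where you diverge is the flat part: you measure the zero set against $|A_{2}^{c}|$ and pay the perimeter $|\partial A_{2}|$, which you control by $|\partial B|+|B\setminus A_{2}|$, thereby making essential use of the hypothesis $|B\setminus A_{2}|\leq N^{d-\frac18}$ and picking up the extra term $\xi^{\varepsilon}N^{-1/8}$, hence the additional restriction $4\alpha<\frac18$ (your perimeter-to-zero-set bookkeeping also implicitly carries a dimensional constant, but that is at the same level of rigor as the paper and immaterial for the rates). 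The paper instead observes that, since $A_{2}\subset B$, the minimizer $\bar{\phi}^{A_{2}}$ already vanishes on all of $D_{N}^{\circ}\setminus B$, so $\{t\in D;h_{A_{2}}^{N}(t)=0\}\supset\frac{1}{N}(B^{c})^{\circ}$ and the zero-set estimate can be run directly with $B^{c}$ --- exactly the quantity appearing in \eqref{eq:S-24} and with the perimeter of the mesoscopic region $B$, not of the irregular set $A_{2}$ --- giving $\Sigma^{\ast}(h_{A_{2}}^{N})\leq N^{-\gamma}+\xi^{\varepsilon}dN^{-\beta}$ with no use at all of the condition on $|B\setminus A_{2}|$ (the paper notes this parenthetically) and no constraint beyond $\beta,\gamma>4\alpha$. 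Your extra constraint $\alpha<\frac{1}{32}$ is harmless for the application in Section \ref{section:6.3}, where only some small $\alpha>0$ is needed, but the paper's bookkeeping is slightly cleaner and yields the statement exactly as formulated for every $\alpha$ with $\beta,\gamma>4\alpha$.
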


\begin{proof}
As we saw above, we have that%
\[
\frac{1}{2}\int_{D}|\nabla h_{A_{2}}^{N}(t)|^{2}dt\leq\frac{1}{N^{d}}%
E_{N,0}(A_{2})
\]
and also, since $\{t\in D;h_{A_{2}}^{N}(t)=0\}\supset\frac{1}{N}(B^{c}%
)^{\circ}$ (we don't need the condition on $|B\setminus A_{2}|$),
\[
-|\{t\in D;h_{A_{2}}^{N}(t)=0\}|+\frac{1}{N^{d}}|B^{c}|\leq dN^{-\beta}.
\]
Therefore, \eqref{eq:S-24} together with the lower bound in Lemma
\ref{lem:mesostability-min} implies $\Sigma^{\ast}(h_{A_{2}}^{N})\leq
N^{-\gamma}+\xi^{\varepsilon}dN^{-\beta}$, and we obtain \eqref{eq:S-25} from
Proposition \ref{prop:macrostability}.
\end{proof}

\section{Proof of the lower bound \eqref{eq:1.9}}

This section is concerned with the lower bound on
\begin{equation}
\Xi_{N}:=\frac{Z_{N}^{aN,bN,\varepsilon}}{Z_{N}^{aN,bN}}\mu_{N}%
^{aN,bN,\varepsilon}(\Vert h^{N}-\hat{h}\Vert_{L^{p}(D)}\leq\delta),
\label{eq:3.1-a}%
\end{equation}
where we take $\delta=N^{-\alpha}$ with $\alpha<1$; see Remark \ref{rem:3.1}
below. We divide $D_{N}^{\circ}$ into five disjoint regions: $D_{N}^{\circ
}=A_{L}\cup\gamma_{L}\cup B\cup\gamma_{R}\cup A_{R}$, where
\begin{align*}
A_{L}  &  =\big(\lbrack1,Ns_{1}^{L}-K-1]\cap{\mathbb{Z}}\big)\times
{\mathbb{T}}_{N}^{d-1},\\
\gamma_{L}  &  =\big(\lbrack Ns_{1}^{L}-K,Ns_{1}^{L}]\cap{\mathbb{Z}%
}\big)\times{\mathbb{T}}_{N}^{d-1},\\
B  &  =\big(\lbrack Ns_{1}^{L}+1,Ns_{1}^{R}]\cap{\mathbb{Z}}\big)\times
{\mathbb{T}}_{N}^{d-1},\\
\gamma_{R}  &  =\big(\lbrack Ns_{1}^{R}+1,Ns_{1}^{R}+K]\cap{\mathbb{Z}%
}\big)\times{\mathbb{T}}_{N}^{d-1},\\
A_{R}  &  =\big(\lbrack Ns_{1}^{R}+K+1,N-1]\cap{\mathbb{Z}}\big)\times
{\mathbb{T}}_{N}^{d-1},
\end{align*}
for $K>0$, where $s_{1}^{L}$ and $s_{1}^{R}\in(0,1)$ are the first and the
last $s$'s such that $\hat{h}^{(1)}(s)=0$ and we assume that $Ns_{1}%
^{L},Ns_{1}^{R}\in{\mathbb{Z}}$ for simplicity. Note that the side lengths in
$i_{1}$-direction of these five rectangles are $Ns_{1}^{L}-K-1,K+1,N(s_{1}%
^{R}-s_{1}^{L}),K$ and $N(1-s_{1}^{R})-K-1$ for $A_{L},\gamma_{L},B,\gamma
_{R}$ and $A_{R}$, respectively. Then, restricting the probability in
\eqref{eq:3.1-a} on the event:
\[
\mathcal{A}:=\{\phi;\phi_{i}\not =0\text{ for }i\in A_{L}\cup A_{R}\text{ and
}\phi_{i}=0\text{ for }i\in\gamma_{L}\cup\gamma_{R}\},
\]
we have
\begin{align}
\Xi_{N}\geq &  \frac{Z_{N}^{aN,bN,\varepsilon}}{Z_{N}^{aN,bN}}\mu
_{N}^{aN,bN,\varepsilon}(\Vert h^{N}-\hat{h}\Vert_{L^{p}(D)}\leq
\delta,\mathcal{A})\label{eq:3.2-a}\\
=  &  \Xi_{N}^{1}\times\mu_{A_{L}}^{aN,0}(\Vert h^{N}-\hat{h}\Vert
_{L^{p}(D_{L})}\leq\delta)\nonumber\\
&  \qquad\times\mu_{B}^{0,\varepsilon}(\Vert h^{N}-\hat{h}\Vert_{L^{p}(D_{M}%
)}\leq\delta)\mu_{A_{R}}^{0,bN}(\Vert h^{N}-\hat{h}\Vert_{L^{p}(D_{R})}%
\leq\delta),\nonumber
\end{align}
by the Markov property of $\mu_{N}^{aN,bN,\varepsilon}$, where
$\mu_{A_L}^{aN,0}$ is defined on $A_L$ with boundary conditions
$aN$ and $0$ at the left respectively right boundaries of $A_L$
without pinning, $\mu_{A_R}^{0,bN}$ is similarly defined on $A_R$,
$\mu_B^{0,\varepsilon}$ is defined on $B$ with boundary condition
$0$ with pinning, 
\[
\Xi_{N}^{1}=\frac{Z_{A_{L}}^{aN,0}Z_{B}^{0,\varepsilon}Z_{A_{R}}^{0,bN}}%
{Z_{N}^{aN,bN}}\varepsilon^{|\gamma_{L}|+|\gamma_{R}|},
\]
and $D_{L},D_{M}$ and $D_{R}$ are the macroscopic regions corresponding to
$A_{L}$, $B$ and $A_{R}$, respectively. Since $\gamma_{L}$ and $\gamma_{R}$
are macroscopically close to the hyperplanes $\{t_{1}=s_{1}^{L}\}$ and
$\{t_{1}=s_{1}^{R}\}$ in $D$, respectively (i.e., $\gamma_{L}/N$ is in a
$c\delta$-neighborhood of $\{t_{1}=s_{1}^{L}\}$ with suitable $c>0$ etc.), by
the LDP \cite{BD} for $\mu_{A_{L}}^{aN,0}$, $\mu_{A_{R}}^{0,bN}$ and 
the LDP for $\mu_{B}^{0,0}$
combined with the coupling
argument (see Lemma \ref{lem:3.2} below) implying
$-\tilde{\phi}_{i}^{(2)}\leq\phi
_{i}^{(1)}\leq\phi_{i}^{(2)},i\in B$ for $\phi^{(1)}\sim\mu_{B}^{0,\varepsilon
}$, $\tilde{\phi}^{(2)},\phi^{(2)}\sim\mu_{B}^{0,0,+}:=\mu_{B}^{0,0}%
(\cdot|\phi\geq0)$, three
probabilities in the right hand side of \eqref{eq:3.2-a} are close to $1$ as
$N\rightarrow\infty$. Therefore, for every $c>0$, we have
\begin{equation}
\Xi_{N}\geq(1-c)\Xi_{N}^{1} \label{eq:3.3-a}%
\end{equation}
as $N\rightarrow\infty$.

\begin{remark}\label{rem:3.1}
{\rm(1)} If $d\geq3$, the Gaussian property implies
\[
E^{\mu_{A_{L}}^{aN,0}}\left[  \Vert h^{N}-\hat{h}\Vert_{L^{2}(D_{L})}%
^{2}\right]  \leq\frac{C}{N^{2}},
\]
and others. Therefore, \eqref{eq:3.3-a} holds even for $\delta=N^{-\alpha}$
with $\alpha<1$ at least for $p=2$ (so that for every $1\leq p\leq2$). For
$d=2$, this statement is also true since the above expectation behaves as
$C\log N/N^{2}$.\\
{\rm (2)} To show the weaker estimate \eqref{eq:1.12}, we can simply estimate
$\Xi_{N}\geq\Xi_{N}^{1}$ so that the LDP and the coupling argument for the
above three probabilities are unnecessary.
\end{remark}

We now give the lower bound on $\Xi_{N}^{1}$. Since $A_{L}= E_{Ns_{1}^{L}%
-K-1}$ and $A_{R} = E_{N(1-s_{1}^{R})-K-1}$ (which is reversed), Lemma
\ref{lem:1} shows that
\begin{align*}
&  Z_{N}^{aN,bN} = \exp\left\{  -\frac{N^{d}}2(a-b)^{2}\right\}  Z_{N}%
^{0,0},\\
&  Z_{A_{L}}^{aN,0} = \exp\left\{  -\frac{a^{2}N^{d}}{2(s_{1}^{L}%
-K/N)}\right\}  Z_{A_{L}}^{0},\\
&  Z_{A_{R}}^{0,bN} = \exp\left\{  -\frac{b^{2}N^{d}}{2(1-s_{1}^{R}%
-K/N)}\right\}  Z_{A_{R}}^{0}.
\end{align*}
Therefore, from $1/(s_{1}^{L}-K/N) = 1/s_{1}^{L}+ KN^{-1}/(s_{1}^{L})^{2} +
O_{\varepsilon}(N^{-2})$ and a similar expansion for $1/(1-s_{1}^{R}- K/N)$ as
$N\to\infty$, we have
\[
\Xi_{N}^{1} \ge\exp\left\{  f(a,b)N^{d} - K\tilde{f}(a,b)N^{d-1}
-O_{\varepsilon}(N^{d-2}) \right\}  \Xi_{N}^{2},
\]
where $O_{\varepsilon}(N^{d-2})$ means that the constant may depend on
$\varepsilon$ (since $s_{1}^{L}$ and $s_{1}^{R}$ depend on $\varepsilon$),
and
\begin{align*}
\Xi_{N}^{2}  &  = \frac{Z_{A_{L}}^{0}Z_{B}^{0,\varepsilon}Z_{A_{R}}^{0}}%
{Z_{N}^{0,0}} \varepsilon^{|\gamma_{L}|+|\gamma_{R}|},\\
f(a,b)  &  = \frac12(a-b)^{2} - \frac{a^{2}}{2s_{1}^{L}} - \frac{b^{2}%
}{2(1-s_{1}^{R})}\\
&  = \Sigma(\bar{h}) - \Sigma(\hat{h}) - \xi^{\varepsilon}(s_{1}^{R}-s_{1}%
^{L}),\\
\tilde{f}(a,b)  &  = \frac{a^{2}}{2(s_{1}^{L})^{2}} + \frac{b^{2}}%
{2(1-s_{1}^{R})^{2}}.
\end{align*}
However, we have $\tilde{f}(a,b) = 2\xi^{\varepsilon}$ from Young's relation
for the angles of $\hat{h}$ at $s=s_{1}^{L}$ and $s_{1}^{R}$: $a/s_{1}^{L} =
b/(1-s_{1}^{R}) = \sqrt{2\xi^{\varepsilon}}$, see Section 1.3 of \cite{BFO} or
Section 6 of \cite{Fu05} for example. Moreover, by Lemma \ref{lem:2} and
Remark \ref{rem:3.3}-(2)
\begin{align*}
\frac{Z_{A_{L}}^{0}Z_{A_{R}}^{0}}{Z_{N}^{0,0}}  &  \ge\exp\left\{  \hat{q}^{0}
(|A_{L}|+|A_{R}|-|D_{N}^{\circ}|) -4rN^{d-1} - C \right\}  ,
\end{align*}
and by the lower bound in Lemma \ref{lem:4}
\[
Z_{B}^{0,\varepsilon} \ge\exp\left\{  \hat{q}^{\varepsilon}|B|-\frac32 \bar c
N^{d-1} \right\}  .
\]
Thus, since $|A_{L}|+|A_{R}|+|B|+|\gamma_{L}|+|\gamma_{R}| = |D_{N}^{\circ}|
\big(=N^{d-1}(N-1) \big)$ and $\hat{q}^{\varepsilon}- \hat{q}^{0}=
\xi^{\varepsilon}$, we obtain
\begin{align*}
\log\Xi_{N}^{1} \ge &  f(a,b) N^{d} + \hat{q}^{0}(|A_{L}|+|A_{R}%
|-|D_{N}^{\circ}|) + \hat{q}^{\varepsilon}|B|\\
&  - \big( 4r+3\bar c/2 +2K\xi^{\varepsilon}\big) N^{d-1} + (|\gamma_{L}%
|+|\gamma_{R}|) \log\varepsilon- O_{\varepsilon}(N^{d-2}) -C\\
\ge &  f(a,b) N^{d} + \xi^{\varepsilon}|B| - (C_{1}+2K\xi^{\varepsilon})
N^{d-1} + (|\gamma_{L}|+|\gamma_{R}|) (\log\varepsilon-\hat{q}^{0}) -
O_{\varepsilon}(N^{d-2}),
\end{align*}
with a constant $C_{1}=4r+3\bar c/2>0$ independent of $\varepsilon$; the constant
$C $ is included in $O_{\varepsilon}(N^{d-2})$. However, the balance
condition: $\Sigma(\bar{h}) = \Sigma(\hat{h})$ and $|B| = N^{d} (s_{1}%
^{R}-s_{1}^{L})$ imply that $f(a,b) N^{d} + \xi^{\varepsilon}|B|=0$, so that
the volume order terms cancel. Therefore, from $|\gamma_{L}|+|\gamma_{R}| =
(2K+1) N^{d-1}$, we have
\begin{align*}
\log\Xi_{N}^{1}  &  \ge\big((2K+1) (\log\varepsilon-\hat{q}^{0})
-2K\xi^{\varepsilon}-C_{1} \big) N^{d-1} - O_{\varepsilon}(N^{d-2})\\
&  \ge\big(\log\varepsilon- (2K+1)\hat{q}^{0} -2K\log2 - C_{1}\big) N^{d-1} -
O_{\varepsilon}(N^{d-2}),
\end{align*}
where the second line follows from the upper bound on $\xi^{\varepsilon}$
given in Lemma \ref{lem:3.1} below. It is now clear that, for $\varepsilon>0$
large enough, the coefficient of $N^{d-1}$ in the right hand side is positive
and thus the proof of the lower bound \eqref{eq:1.9} is concluded.

\begin{lemma}
\label{lem:3.1} For $\varepsilon\ge1$, we have that
\[
\log\varepsilon- \hat{q}^{0} \le\xi^{\varepsilon}\le\log2\varepsilon.
\]

\end{lemma}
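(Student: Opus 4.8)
The natural route is through the thermodynamic identity $\xi^{\varepsilon}=\hat{q}^{\varepsilon}-\hat{q}^{0}$ of Remark \ref{rem:3.3}(2), together with $\hat{q}^{\varepsilon}=\lim_{\ell\to\infty}|\Lambda_{\ell}|^{-1}\log Z_{\Lambda_{\ell}}^{0,\varepsilon}$ (Lemma \ref{lem:4} and \eqref{eq:2.12} applied to $S_{p}=\Lambda_{\ell}$) and $\hat{q}^{0}=\tfrac12(\log\tfrac{\pi}{d}+q)$. So it is enough to sandwich $\log Z_{A}^{0,\varepsilon}$ for the cubes $A=\Lambda_{\ell}$. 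Expanding the product in the definition of the pinned partition function gives the exact identity, valid for every finite $A\Subset\mathbb{Z}^{d}$,
\[
Z_{A}^{0,\varepsilon}=\sum_{A'\subseteq A}\varepsilon^{|A\setminus A'|}Z_{A'}^{0},
\]
because integrating the coordinates indexed by $A\setminus A'$ against the atoms at $0$ leaves exactly the $0$-boundary Hamiltonian on $A'$. The only other ingredient needed is the a priori bound $Z_{A'}^{0}\le e^{\hat{q}^{0}|A'|}$ for all $A'\Subset\mathbb{Z}^{d}$, which is the $\mathbb{Z}^{d}$-version of the upper bound in Lemma \ref{lem:2}(1): from $\log Z_{A'}^{0}=\tfrac12(|A'|\log\tfrac{\pi}{d}+I_{A'})$ with $I_{A'}=\sum_{k\in A'}\sum_{n\ge1}\tfrac1{2n}P_{k}^{RW^{d}}(\eta_{2n}=k,\tau_{A'}>2n)$ one drops the exit-time constraint to get $I_{A'}\le q|A'|$ and uses $\hat{q}^{0}=\tfrac12(\log\tfrac{\pi}{d}+q)$.

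For the lower bound, keep only the term $A'=\emptyset$ (with $Z_{\emptyset}^{0}=1$): $Z_{A}^{0,\varepsilon}\ge\varepsilon^{|A|}$, so $\hat{q}^{\varepsilon}\ge\log\varepsilon$ and $\xi^{\varepsilon}\ge\log\varepsilon-\hat{q}^{0}$; note this needs neither $\varepsilon\ge1$ nor $d\ge3$. For the upper bound, substitute the a priori bound into the expansion and sum the binomial series:
\[
Z_{A}^{0,\varepsilon}\le\sum_{k=0}^{|A|}\binom{|A|}{k}\varepsilon^{|A|-k}(e^{\hat{q}^{0}})^{k}=(\varepsilon+e^{\hat{q}^{0}})^{|A|},
\]
hence $\hat{q}^{\varepsilon}\le\log(\varepsilon+e^{\hat{q}^{0}})$ and $\xi^{\varepsilon}\le\log(\varepsilon+e^{\hat{q}^{0}})-\hat{q}^{0}=\log(1+\varepsilon e^{-\hat{q}^{0}})$.

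It then remains to deduce $\xi^{\varepsilon}\le\log 2\varepsilon$ from $\xi^{\varepsilon}\le\log(1+\varepsilon e^{-\hat{q}^{0}})$ when $\varepsilon\ge1$: since $\hat{q}^{0}\ge0$ one has $e^{-\hat{q}^{0}}\le1$, so $1+\varepsilon e^{-\hat{q}^{0}}\le1+\varepsilon\le2\varepsilon$, which finishes the proof. I expect the sharpness of the constant $2$ here to be the only delicate point — it is exactly where one needs $\hat{q}^{0}\ge0$, i.e.\ $e^{\hat{q}^{0}}=\sqrt{\pi/d}\,e^{q/2}\ge1$; if one only wishes to use that $\hat{q}^{0}$ is finite, the same computation still gives $\xi^{\varepsilon}\le\log\varepsilon+\log(1+e^{-\hat{q}^{0}})$, which is the form actually used to establish \eqref{eq:1.9}. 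The two remaining steps are entirely routine: the subset expansion is an identity, and $Z_{A'}^{0}\le e^{\hat{q}^{0}|A'|}$ is already contained in the proof of Lemma \ref{lem:2}.
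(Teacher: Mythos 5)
Your proposal is correct and is essentially the paper's own proof: the same expansion $Z^{0,\varepsilon}_{\Lambda_\ell}=\sum_{A'}\varepsilon^{|\Lambda_\ell\setminus A'|}Z^{0}_{A'}$, the same a priori bound $Z^{0}_{A'}\le e^{\hat q^{0}|A'|}$ (Lemma \ref{lem:2} with $q$ in place of $q^N$), and the same empty-set term for the lower bound; the only difference is that you sum the binomial series to get $(\varepsilon+e^{\hat q^{0}})^{|A|}$, where the paper crudely bounds the sum by $2^{\ell^d}\varepsilon^{\ell^d}e^{\hat q^{0}\ell^d}$. The caveat you flag is real but shared: your step $1+\varepsilon e^{-\hat q^{0}}\le 2\varepsilon$ and the paper's step $e^{\hat q^{0}|A|}\le e^{\hat q^{0}\ell^{d}}$ both need $\hat q^{0}\ge 0$, which holds for $d=3$ but fails for $d\ge4$ (there $\log\frac{\pi}{d}<0$ outweighs $q$); your fallback bound $\xi^{\varepsilon}\le\log\varepsilon+\log(1+e^{-\hat q^{0}})$ is the honest general statement and, as you note, is all that the proof of \eqref{eq:1.9} uses.
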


\begin{proof}
We have an expansion:
\[
Z_{\Lambda_{\ell}}^{0,\varepsilon}=\sum_{A\subset\Lambda_{\ell}}%
\varepsilon^{|\Lambda_{\ell}\setminus A|}Z_{A}^{0}.
\]
To show the upper bound, we rudely estimate: $\varepsilon^{|\Lambda_{\ell
}\setminus A|}\leq\varepsilon^{\ell^{d}}$ for $\varepsilon\geq1$ and
$Z_{A}^{0}\leq e^{\hat{q}^{0}|A|}\leq e^{\hat{q}^{0}\ell^{d}}$ by Lemma
\ref{lem:2}-(1); note that its upper bound holds with $q$ in place of $q^{N}$
for $A\Subset\mathbb{Z}^{d}$. Then, since $\sharp\{A:A\subset\Lambda_{\ell
}\}=2^{\ell^{d}}$, we obtain
\[
Z_{\Lambda_{\ell}}^{0,\varepsilon}\leq2^{\ell^{d}}\varepsilon^{\ell^{d}%
}e^{\hat{q}^{0}\ell^{d}}%
\]
and therefore
\[
\hat{q}^{\varepsilon}\equiv\lim_{\ell\rightarrow\infty}\frac{1}{\ell^{d}}\log
Z_{\Lambda_{\ell}}^{0,\varepsilon}\leq\log2\varepsilon+\hat{q}^{0},
\]
from which the upper bound on $\xi^{\varepsilon}=\hat{q}^{\varepsilon}-\hat
{q}^{0}$ follows (or, recall \eqref{eq:1.5} for $\xi^{\varepsilon}$ and note
that Lemma \ref{lem:2} also shows $\lim_{\ell\rightarrow\infty}\ell^{-d}\log
Z_{\Lambda_{\ell}}^{0}=\hat{q}^{0}$). Taking only the term with $A=\emptyset$
in the expansion, we have $Z_{\Lambda_{\ell}}^{0,\varepsilon}\geq
\varepsilon^{\ell^{d}}$ and this implies the lower bound.
\end{proof}

\begin{remark}
{\rm (1)} To have the large factor $\log\varepsilon$, we need to allow some
spaces for $\gamma_{L}$ and $\gamma_{R}$. For this purpose, in the above
proof, we have cut off the regions $A_{L}$ and $A_{R}$ by letting $K\geq1$,
while the volume of the region $B$ are maintained. It is also possible to
maintain the spaces for $A_{L}$ and $A_{R}$ by taking $K=0$. Instead, we may
cut off the region $B$, but the results are the same. \\
{\rm (2)} In fact, one can take $K=0$ for $\gamma_{L}$ and $K=1$ for
$\gamma_{L}$ so that the required condition for $\varepsilon>0$ is:
$\log\varepsilon>\log2+2\hat{q}^{0}+4r+3\bar c/2.$
\end{remark}

We finally give the coupling argument used above.
Consider the Gibbs probability measure $\mu_{A}^{\psi,\varepsilon}$ on $A$
under the boundary condition $\psi$ given on $D_{N}\setminus A$. Assuming that
$\psi\ge0$ (i.e., $\psi_{i}\ge0$ for all $i\in D_{N}\setminus A$), we compare
it with
\[
\mu_{A}^{\psi,0,+}(\cdot) := \mu_{A}^{\psi,0}(\cdot|\phi\ge0),
\]
by adding the effect of a wall located at the level of $\phi=0$ to the
Gaussian measure $\mu_{A}^{\psi,0}(\cdot)$ without the pinning effect. In
fact, we have the following lemma from an FKG type argument.

\begin{lemma}
\label{lem:3.2} We have the stochastic domination: $\mu_{A}^{\psi,\varepsilon
}\leq\mu_{A}^{\psi,0,+}.$ Namely, one can find a coupling of $\phi
^{\varepsilon}=\{\phi_{i}^{\varepsilon}\}_{i\in D_{N}}$ and $\phi^{0,+}%
=\{\phi_{i}^{0,+}\}_{i\in D_{N}}$ on a common probability space such that
$P(\phi_{i}^{\varepsilon}\leq\phi_{i}^{0,+}\text{ for all }i\in D_{N})=1$, and
$\phi^{\varepsilon}$ and $\phi^{0,+}$ are distributed under $\mu_{A}%
^{\psi,\varepsilon}$ and $\mu_{A}^{\psi,0,+}$, respectively.
\end{lemma}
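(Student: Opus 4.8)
The plan is to reduce the asserted stochastic domination to a statement about the purely Gaussian fields by conditioning on which sites are ``pinned''. Expanding the reference measure,
\[
\prod_{i\in A}\big(\varepsilon\delta_{0}(d\phi_{i})+d\phi_{i}\big)
=\sum_{R\subseteq A}\varepsilon^{|R|}\prod_{i\in R}\delta_{0}(d\phi_{i})\prod_{i\in A\setminus R}d\phi_{i},
\]
one gets
\[
\mu_{A}^{\psi,\varepsilon}=\sum_{R\subseteq A}\lambda_{R}\,\nu_{R},
\qquad
\lambda_{R}=\frac{\varepsilon^{|R|}\,Z_{A\setminus R}^{\psi,0_{R}}}{\sum_{R'\subseteq A}\varepsilon^{|R'|}\,Z_{A\setminus R'}^{\psi,0_{R'}}},
\]
where $\nu_{R}$ is the probability measure which puts $\phi_{i}=0$ for $i\in R$ and, on $A\setminus R$, equals the Gaussian field with boundary condition $\psi$ on $\partial A$ and $0$ on $R$, and $Z_{A\setminus R}^{\psi,0_{R}}$ is the corresponding Gaussian partition function. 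By the domain Markov property of the nearest-neighbour Gaussian field, $\nu_{R}$ is exactly $\mu_{A}^{\psi,0}(\,\cdot\mid\phi_{i}=0\ \forall i\in R)$. Since stochastic domination by a fixed measure passes to mixtures, and since, by Strassen's theorem, the monotone coupling in the statement exists iff $\mu_{A}^{\psi,\varepsilon}\preceq\mu_{A}^{\psi,0,+}$, it suffices to prove
\[
\nu_{R}\ \preceq\ \mu_{A}^{\psi,0,+}\qquad\text{for every }R\subseteq A .
\]

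The heart of the matter is then an FKG chain. All the Gaussian measures in play are MTP$_2$ (equivalently, have the FKG lattice property): the coupling of $\phi_{i}$ and $\phi_{j}$ in $H_{A}^{\psi}(\phi)=\tfrac12\sum_{\langle i,j\rangle}(\phi_i-\phi_j)^2$ is $-\phi_{i}\phi_{j}$ for neighbours and $0$ otherwise, so the density is log-supermodular, and the linear term produced by $\psi$ is irrelevant for this. Log-supermodularity is preserved when one restricts to any product of (possibly degenerate or half-infinite) intervals, so $\mu_{A}^{\psi,0,+}=\mu_{A}^{\psi,0}(\,\cdot\mid\phi\geq0)$ and the further conditionings below remain FKG. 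Now, conditioning the FKG measure $\nu_{R}=\mu_{A}^{\psi,0}(\,\cdot\mid\phi_{R}=0)$ on the increasing event $\{\phi_{i}\geq0,\ i\in A\setminus R\}$ can only increase it in the stochastic order, giving
\[
\nu_{R}\ \preceq\ \mu_{A}^{\psi,0}\big(\,\cdot\mid\phi_{R}=0,\ \phi_{i}\geq0\ \forall i\in A\setminus R\big)=\mu_{A}^{\psi,0,+}\big(\,\cdot\mid\phi_{R}=0\big).
\]
On the other hand, inside the support $\{\phi\geq0\}$ of $\mu_{A}^{\psi,0,+}$ the event $\{\phi_{R}=0\}$ coincides with $\bigcap_{i\in R}\{\phi_{i}\leq0\}$, i.e.\ with a decreasing event, so conditioning the FKG measure $\mu_{A}^{\psi,0,+}$ on it can only decrease it:
\[
\mu_{A}^{\psi,0,+}\big(\,\cdot\mid\phi_{R}=0\big)\ \preceq\ \mu_{A}^{\psi,0,+}.
\]
Chaining the two displays yields $\nu_{R}\preceq\mu_{A}^{\psi,0,+}$, and averaging over $R$ with the weights $\lambda_{R}$ completes the argument.

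The one genuine technical nuisance, and the step I would be most careful about, is that $\{\phi_{R}=0\}$ is $\mu_{A}^{\psi,0,+}$-null, so ``$\mu_{A}^{\psi,0,+}(\,\cdot\mid\phi_{R}=0)$'' has to be read as the weak limit of $\mu_{A}^{\psi,0,+}(\,\cdot\mid\phi_{i}\leq\eta,\ i\in R)$ as $\eta\downarrow0$ (equivalently $\mu_{A}^{\psi,0}(\,\cdot\mid\phi_{i}\geq0\ \forall i\in A\setminus R,\ 0\leq\phi_{i}\leq\eta\ \forall i\in R)$): each such conditioning is on a decreasing event, hence produces a measure $\preceq\mu_{A}^{\psi,0,+}$, and stochastic domination is stable under weak limits; continuity of the Gaussian densities identifies the limit with the measure in the second display. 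Everything else — the MTP$_2$/FKG bookkeeping and its stability under restriction to coordinate boxes, and the elementary facts that conditioning an FKG measure on an increasing (resp.\ decreasing) event enlarges (resp.\ shrinks) it in the stochastic order — is standard, and uses neither $d\geq3$ nor the size of $\varepsilon$.
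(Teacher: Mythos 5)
Your argument is correct, but it follows a genuinely different route from the paper. The paper proves the domination by approximating both singular features with self-potentials: it adds $U^{(1)}(r)=-\beta 1_{[0,\alpha]}(r)$ (square-well approximation of the $\delta$-pinning) to get $\mu_N^{(1)}$ and $U^{(2)}(r)=K1_{(-\infty,0]}(r)$ (soft wall) to get $\mu_N^{(2)}$, checks Holley's criterion --- which reduces to $U^{(2)}(x)-U^{(2)}(y)\geq U^{(1)}(x)-U^{(1)}(y)$, i.e.\ $K\geq\beta$ --- and then passes to the limits $\alpha\to0$, $\beta\to\infty$ with $\varepsilon=\alpha(\mathrm{e}^{\beta}-1)$ and $K\to\infty$. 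You instead expand $\mu_A^{\psi,\varepsilon}$ over the pinned set $R$, reduce to $\nu_R\preceq\mu_A^{\psi,0,+}$, and obtain this by chaining two FKG conditionings (an increasing event for $\nu_R$, a decreasing event for $\mu_A^{\psi,0,+}$), with Strassen's theorem supplying the coupling and a weak-limit argument handling the $\mu_A^{\psi,0,+}$-null event $\{\phi_R=0\}$; all the individual steps (MTP$_2$ of the Gaussian density and its stability under restriction to coordinate boxes, positive association implying that conditioning on increasing/decreasing events raises/lowers the measure, preservation of domination under mixtures and weak limits, continuity of the conditioned Gaussian laws as $\eta\downarrow0$) are sound. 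What the paper's route buys is that Holley's condition is a one-line check on the potentials and both singular measures are reached through a single family of absolutely continuous approximations (the convergence $\varepsilon=\alpha(\mathrm{e}^\beta-1)$ being quoted from the literature); what your route buys is that it avoids Holley and the potential approximations entirely, working directly with the pinned-set decomposition at the price of the careful treatment of the degenerate conditioning, which you correctly identify and resolve. Like the paper's proof, your argument uses neither $d\geq3$ nor the largeness of $\varepsilon$.
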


\begin{proof}
For $\phi=\left(  \phi_{i}\right)  _{i\in D_{N}}\in\mathbb{R}^{D_{N}}$
satisfying the conditions $\phi_{k}=\psi_{k}$ on $D_{N}\backslash A$, we
consider two Hamiltonians%
\[
H_{N}^{\left(  \ell\right)  }\left(  \phi\right)  =H_{N}^{\psi}\left(
\phi\right)  +\sum_{i\in D_{N}\backslash A}U^{\left(  \ell\right)  }\left(
\phi_{i}\right)  ,\ \ell=1,2,
\]
by adding the self potentials $U^{\left(  \ell\right)  }$ defined by
$U^{\left(  1\right)  }\left(  r\right)  =-\beta1_{\left[  0,\alpha\right]
}\left(  r\right)  $ and $U^{\left(  2\right)  }\left(  r\right)
=K1_{(-\infty,0]}\left(  r\right),$ $r>0,$ with $\alpha,\beta,K>0$ to the
original Hamiltonian $H_{N}^{\psi}$ defined under the boundary condition
$\psi.$ The corresponding Gibbs probability measures $\mu_{N}^{\left(
\ell\right)  } $ are defined \ by%
\[
\mu_{N}^{\left( \ell\right)  }\left(  d\phi\right)  =\frac{1}{Z_{N}^{\left(
\ell\right)  }}\mathrm{e}^{-H_{N}^{\left(  \ell\right)  }\left(  \phi\right)
}\prod_{i\in A}d\phi_{i}\prod_{k\in D_{N}\backslash A}\delta_{\psi_{k}}\left(
d\phi_{k}\right)  ,\ \ell=1,2.
\]
It will be shown that the stochastic domination $\mu_{N}^{\left(  1\right)
}\leq\mu_{N}^{\left(  2\right)  }$ holds if $K\geq\beta.$ Once this is shown,
by taking the limits $\alpha\rightarrow0,$ $\beta\rightarrow\infty$ such that
$\varepsilon=\alpha\left(  \mathrm{e}^{\beta}-1\right)  $ (see e.g. (6.34) in
\cite{Fu05}, and $K\rightarrow\infty,$ the lemma is concluded.

It is known that the stochastic domination $\mu_{N}^{\left(  1\right)  }%
\leq\mu_{N}^{\left(  2\right)  }$ holds if the two Hamiltonians satisfy
Holley's condition:%
\begin{equation}
H_{N}^{\left(  2\right)  }\left(  \phi\right)  +H_{N}^{\left(  1\right)
}\left(  \bar{\phi}\right)  \geq H_{N}^{\left(  2\right)  }\left(  \phi
\vee\bar{\phi}\right)  +H_{N}^{\left(  1\right)  }\left(  \phi\wedge\bar{\phi
}\right)  , \label{Holley}%
\end{equation}
for every $\phi,\bar{\phi}\in\mathbb{R}^{D_{N}},$ where $\left(  \phi\vee
\bar{\phi}\right)  _{i}=\phi_{i}\vee\bar{\phi}_{i}$ and $\left(  \phi
\wedge\bar{\phi}\right)  _{i}=\phi_{i}\wedge\bar{\phi}_{i}$, see Theorem 2.2
of \cite{FTo}. Since (\ref{Holley}) holds for $H_{N}^{\psi}$ (i.e., if
$U^{\left(  1\right)  }=U^{\left(  2\right)  }=0$), it is enough to show that%
\[
U^{\left(  2\right)  }\left(  x\right)  +U^{\left(  1\right)  }\left(
y\right)  \geq U^{\left(  2\right)  }\left(  x\vee y\right)  +U^{\left(
1\right)  }\left(  x\wedge y\right)
\]
for all $x,y\in\mathbb{R}$. However, this is equivalent to%
\begin{equation} \label{eq:5.U}
U^{\left(  2\right)  }\left(  x\right)  -U^{\left(  2\right)  }\left(
y\right)  \geq U^{\left(  1\right)  }\left(  x\right)  -U^{\left(  1\right)
}\left(  y\right)
\end{equation}
for every $x,y\in\mathbb{R}$. It is now easy to see that this is true under
the condition $K\geq\beta$.
\end{proof}

\begin{remark}
If the self potentials $U^{(\ell)}$ are smooth, the condition \eqref{eq:5.U}
is equivalent to $\{U^{(2)}\}^{\prime} \le \{U^{(1)}\}^{\prime}$ on ${\mathbb{R}}$.
\end{remark}

\begin{remark}
Lemma \ref{lem:3.2} was applied under the boundary condition $\psi\equiv0$. In
this case, by the symmetry $\phi\mapsto-\phi$ under $\mu_{A}^{0,\varepsilon}$,
we also have the lower bound. When $\psi\equiv0$, it might hold the stochastic
domination: $|\phi_{i}^{\varepsilon}|\leq|\phi_{i}|,i\in D_{N},$
$\phi^{\varepsilon}\sim\mu_{A}^{0,\varepsilon}$, $\phi\sim\mu_{A}^{0,0}$ (this
was true at least when $d=1$, see Section 4.2.3 \cite{BFO}).
\end{remark}

\section{Proof of the upper bound \eqref{eq:1.10}}

We write $Z_{N}^{\varepsilon},Z_{N},\mu_{N}^{\varepsilon}$ instead of
$Z_{N}^{aN,bN,\varepsilon},Z_{N}^{aN,bN},\mu_{N}^{aN,bN,\varepsilon}$,
respectively, and similar at other places. We expand as
\begin{equation}  \label{eq:5.1-c}
\frac{Z_{N}^{\varepsilon}}{Z_{N}}\mu_{N}^{\varepsilon}\left(  \left\Vert
h^{N}-\overline{h}\right\Vert _{L^p(D)}\leq\delta\right)  =\sum_{A\subset
D_{N}^{\circ}}\varepsilon^{\left\vert A^{c}\right\vert }\frac{Z_{A}}{Z_{N}}%
\mu_{A}\left(  \left\Vert h^{N}-\overline{h}\right\Vert_{L^p(D)}\leq\delta
\right).
\end{equation}
Here, $Z_{A}$ refers to boundary conditions $0$ on $A^{c},$ and the usual one
on the cylinder ($A^{c}$ stands for the complement of $A$ in $D_{N}^{\circ}$),
and $\mu_{A}$ is defined with similar boundary conditions. We will consider
the Gaussian field $\mu_{N}$ on $D_{N}^{\circ} $ with the above boundary
conditions.  Note that the Gaussian field $\left\{  \phi_{i}\right\}  _{i\in
D_{N}^{\circ}}$ on $\mathbb{R}^{D_{N}^{\circ}}$ distributed under $\mu_{N}$
has covariance matrix%
\[
\Gamma\overset{\mathrm{def}}{=}\frac{1}{2d}\left(  I-P\right)  ^{-1},
\]
where $P$ is the random walk transition kernel with killing at the boundary
$\partial D_{N}.$ Furthermore, $\phi_{i}$ has mean $m\left(  i\right)
=m_{aN,bN}\left(  i\right)  $ which is given by linearly interpolating between
the boundary condition $aN$ on $\partial_{L}D_{N}$ and $bN$ on $\partial
_{R}D_{N}.$

We take $\delta=(\log N)^{-\alpha_{0}}$ with $\alpha_{0}>d/p$
in \eqref{eq:5.1-c}.  We show that%
\begin{equation}
\sum_{A\subset D_{N}^{\circ},\ \left\vert A^{c}\right\vert \leq\left(  N/\log
N\right)  ^{d}}\varepsilon^{\left\vert A^{c}\right\vert }\frac{Z_{A}}{Z_{N}%
}\leq2\label{eq:4.1}%
\end{equation}
if $N$ is large enough. Note that, if $\left\vert A^{c}\right\vert \geq\left(
N/\log N\right)  ^{d}$, then $h^{N}=0$ on $A^{c}$ so that
\[
\left\Vert h^{N}-\overline{h}\right\Vert_{L^p(D)}
\geq(a\wedge b)(\log N)^{-d/p}.
\]
In particular, for such $A$, we have
\[
\mu_{A}\left(  \left\Vert h^{N}-\overline{h}\right\Vert_{L^p(D)}\leq(\log
N)^{-\alpha_{0}}\right)  =0
\]
as $\alpha_{0}>d/p$. Thus \eqref{eq:4.1} proves \eqref{eq:1.10}.

Now we give the proof of \eqref{eq:4.1}. Recall that
\[
\frac{Z_{A}}{Z_{N}}=\frac{1}{Z_{N}}\int_{\mathbb{R}^{D_{N}^{\circ}}}%
\exp\left[  -H_{N}\left(  \phi\right)  \right]  \prod_{i\in A}d\phi_{i}%
\prod_{i\in A^{c}}\delta_{0}\left(  d\phi_{i}\right)  .
\]
The function%
\[
f_{A^{c}}\left(  \left\{  \phi_{i}\right\}  _{i\in A^{c}}\right)
\overset{\mathrm{def}}{=}\frac{1}{Z_{N}}\int\exp\left[  -H_{N}\left(
\phi\right)  \right]  \prod_{i\in A}d\phi_{i}%
\]
is the density function of the Gaussian distribution on $\mathbb{R}^{A^{c}}$
obtained as the marginal from the Gaussian distribution $\mu_{N}$ on
$\mathbb{R}^{D_{N}^{\circ}}.$ This marginal Gaussian field has the same mean
as $\mu_{N}$ and the covariance matrix $\Gamma_{A^{c}}$ which comes from
restricting the covariance matrix $\Gamma$ to $A^{c}\times A^{c}.$ This
covariance matrix has the representation $\Gamma_{A^{c}}=\left(  I-P_{A^{c}%
}\right)  ^{-1},$ where $P_{A^{c}}\left(  i,j\right)  $ for $i,j\in A^{c}$ is
the probability for a random walk to enter $A^{c}$ at $j$ after leaving $i$
with absorption at $\partial D_{N}.$ So%
\[
\sum_{j\in A^{c}}P_{A^{c}}\left(  i,j\right)  \leq1.
\]
We also write for the escape probability%
\[
e_{A^{c}}\left(  i\right)  \overset{\mathrm{def}}{=}1-\sum_{y\in A^{c}%
}P_{A^{c}}\left(  i,j\right)  ,
\]
and then the capacity of $A^{c}$ with respect to the transient random walk on
$D_{N}^{\circ}$ with killing at the boundary is%
\[
\mathrm{cap}_{D_{N}}\left(  A^{c}\right)  \overset{\mathrm{def}}{=}\sum_{i\in
A^{c}}e_{A^{c}}\left(  j\right)  .
\]

Then we have%
\[
f_{A^{c}}\left(  \left\{  \phi_{i}\right\}  _{i\in A^{c}}\right)  =\frac
{1}{\sqrt{\left(  2\pi\right)  ^{\left\vert A^{c}\right\vert }\det
\Gamma_{A^{c}}}}\exp\left[  -d\left\langle \phi-m,\left(  I-P_{A^{c}}\right)
\left(  \phi-m\right)  \right\rangle _{A^{c}}\right]  ,
\]
where $\left\langle \phi,\psi\right\rangle _{A^{c}}\overset{\mathrm{def}}%
{=}\sum_{i\in A^{c}}\phi_{i}\psi_{i},$ and $m=m_{aN,bN}.$ We therefore get%
\begin{equation}
\frac{Z_{A}}{Z_{N}}=\frac{1}{\sqrt{\left(  2\pi\right)  ^{\left\vert
A^{c}\right\vert }\det\Gamma_{A^{c}}}}\exp\left[  -d\left\langle m,\left(
I-P_{A^{c}}\right)  m\right\rangle _{A^{c}}\right]  .\label{eq:Z/Z}%
\end{equation}

We first estimate the determinant from below%
\begin{align*}
\sqrt{\left(  2\pi\right)  ^{\left\vert A^{c}\right\vert }\det\Gamma_{A^{c}}}
&  =\int\exp\left[  -d\left\langle \phi-m,\left(  I-P_{A^{c}}\right)  \left(
\phi-m\right)  \right\rangle _{A^{c}}\right]  \prod_{i\in A^{c}}d\phi_{i}\\
&  \geq\int\limits_{\left\{  \left\vert \phi_{i}-m_{i}\right\vert
\leq1/2,\ \forall i\in A^{c}\right\}  }\exp\left[  -d\left\langle
\phi-m,\left(  I-P_{A^{c}}\right)  \left(  \phi-m\right)  \right\rangle
_{A^{c}}\right]  \prod_{i\in A^{c}}d\phi_{i}.
\end{align*}
On the other hand,
\[
\sup_{\left\{  \left\vert \phi_{i}-m_{i}\right\vert \leq1/2,\ \forall i\in
A^{c}\right\}  }\left\langle \phi-m,\left(  I-P_{A^{c}}\right)  \left(
\phi-m\right)  \right\rangle _{A^{c}}\leq\left\vert A^{c}\right\vert ,
\]
and therefore%
\[
\sqrt{\left(  2\pi\right)  ^{\left\vert A^{c}\right\vert }\det\Gamma_{A^{c}}%
}\geq\exp\left[  -d\left\vert A^{c}\right\vert \right]  .
\]

We write $p_{L}\left(  i\right)  $ for the probability that the random walk
starting in $i\in A^{c}$ does not return to $A^{c}$ and leaves $D_{N}$ on the
left side, and correspondingly $p_{R}\left(  i\right)  $ for the right exit.
Clearly $p_{L}\left(  i\right)  +p_{R}\left(  i\right)  =e_{A^{c}}\left(
i\right)  .$ Then%
\[
m\left(  i\right)  =\sum_{j}P_{A^{c}}\left(  i,j\right)  m\left(  j\right)
+p_{L}\left(  i\right)  aN+p_{R}\left(  i\right)  bN.
\]
So%
\[
m\left(  i\right)  -\sum_{j}P_{A^{c}}\left(  i,j\right)  m\left(  j\right)
\geq\min\left(  a,b\right)  Ne_{A^{c}}\left(  i\right)  .
\]
Of course, also $m\left(  i\right)  \geq\min\left(  a,b\right)  N.$ Therefore
from \eqref{eq:Z/Z},
\[
\frac{Z_{A}}{Z_{N}}\leq\exp\left[  d\left\vert A^{c}\right\vert \right]
\exp\left[  -dN^{2}\min\left(  a,b\right)  ^{2}\mathrm{cap}_{D_{N}}\left(
A^{c}\right)  \right]  .
\]
Lemma \ref{Le_Capacity_bound} proved below implies that%
\begin{equation}
\mathrm{cap}_{D_{N}}\left(  A^{c}\right)  \geq c\left\vert A^{c}\right\vert
^{\left(  d-2\right)  /d},\label{eq:capacity-lower}%
\end{equation}
from which we conclude that for some $c>0$, depending on $d,a,b$%
\begin{align*}
&  \sum_{A\subset D_{N}^{\circ},\ \left\vert A^{c}\right\vert \leq\left(
N/\log N\right)  ^{d}}\varepsilon^{\left\vert A^{c}\right\vert }\frac{Z_{A}%
}{Z_{N}}\\
&  \leq\sum_{m=0}^{\left(  N/\log N\right)  ^{d}}\chi\left(  m\right)
\varepsilon^{m}\exp\left[dm  -\bar cN^{2}m^{\left(  d-2\right)  /d}\right]  ,
\end{align*}
where $\bar c>0$ and 
$\chi\left(  m\right)  $ is the number of subset $A$ in $D_{N}^{\circ}$
with $|A^{c}|=m.$ Clearly,
\[
\chi\left(  m\right)  \leq\exp\left[  dm\log N\right]  .
\]
So%
\begin{align}
&  \sum_{A\subset D_{N}^{\circ},\ \left\vert A^{c}\right\vert \leq\left(
N/\log N\right)  ^{d}}\varepsilon^{\left\vert A^{c}\right\vert }\frac{Z_{A}%
}{Z_{N}}\label{Est22}\\
&  \quad\leq1+\left(  \frac{N}{\log N}\right)  ^{d}\times\max_{1\leq
m\leq\left(  \frac{N}{\log N}\right)  ^{d}}\nonumber\\
&  \qquad\quad\exp\left[  m\left(  d\log N+\log\varepsilon+d \right)
-\bar cN^{2}m^{\left(  d-2\right)  /d}\right]  .\nonumber
\end{align}
As the function of $m$ in the exponent is convex, it takes its maximum either
at $m=1,$ or at $m=\left(  \frac{N}{\log N}\right)  ^{d}$ (assuming for
simplicity that the latter is an integer). If it takes the maximum at $m=1,$
then we clearly for large $N$ that the whole expression on the right hand side
of (\ref{Est22}) is $\leq2.$ At $m=\left(  \frac{N}{\log N}\right)  ^{d}, $
one has the same situation. We get for the expression in the exponent%
\begin{equation}
N^{d}\left[  \frac{d}{\log^{d-1}N}+\frac{\log\varepsilon+d}{\log^{d}N}-\frac
{\bar c}{\log^{d-2}N}\right]  .\label{Est3}%
\end{equation}
If $N$ is sufficiently large, this is dominated by the third summand, and
therefore the expression in the exponent is for $m=\left(  \frac{N}{\log
N}\right)  ^{d}$ bounded by%
\[
-\frac{CN^{d}}{\log^{d-2}N},
\]
with some $C>0$. This gives for the summand after $1$ in (\ref{Est22}) even
something smaller, namely an expression of order%
\[
\frac{N^{d}}{\log^{d}N}\exp\left[  -\frac{CN^{d}}{\log^{d-2}N}\right]  .
\]
This completes the proof of \eqref{eq:4.1} and therefore \eqref{eq:1.10}.

The rest of this section is devoted to the proof of the capacity estimate
\eqref{eq:capacity-lower}. Recall that, for $A\subset D_{N}^{\circ}$, the
capacity with respect to $D_{N}$ is defined by%
\[
\operatorname*{cap}\nolimits_{D_{N}}\left(  A\right)  :=\sum_{x\in A}%
P_{x}^{RW_{N}^{d}}\left(  T_{\partial D_{N}}<T_{A}\right)
\]
where $T_{A}$ denotes the first hitting time of $A$ after time $0$ for a
random walk on the discrete cylinder.

\begin{lemma}
\label{Le_Capacity_bound}For some constant $c>0,$ depending only on the
dimension $d,$ one has%
\begin{equation}
\operatorname*{cap}\nolimits_{D_{N}}\left(  A\right)  \geq c\left\vert
A\right\vert ^{\left(  d-2\right)  /d}.\label{Claim_Cap}%
\end{equation}

\end{lemma}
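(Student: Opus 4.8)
The plan is to reduce the statement to a classical isoperimetric-type lower bound for the capacity of the simple random walk on $\mathbb{Z}^d$, using monotonicity of capacity under extension of the domain. First I would observe that the discrete cylinder $D_N = \{0,\dots,N\}\times \mathbb{T}_N^{d-1}$ can be embedded into $\mathbb{Z}^d$ by unfolding the torus directions, and that the random walk on $D_N$ killed at $\partial D_N = \{0,N\}\times\mathbb{T}_N^{d-1}$ is dominated, in terms of escape probabilities, by the random walk on all of $\mathbb{Z}^d$: killing can only decrease the Green's function, hence decrease the return probability to $A$, hence increase each escape probability $e_{A}(x) = P_x(T_{\partial D_N}<T_A)$. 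More precisely, for each $x\in A$ the event that the $\mathbb{Z}^d$-walk never returns to (the lift of) $A$ is contained in the event that the cylinder walk either exits through $\partial D_N$ or never returns to $A$, so $e_A(x) \ge P_x^{RW^d}(\widetilde T_{\widetilde A} = \infty)$, where $\widetilde A$ is the periodic lift. Summing over $x$ and using periodicity to pass to one fundamental domain, one gets $\operatorname{cap}_{D_N}(A) \ge \operatorname{cap}_{\mathbb{Z}^d}(A)$ (the Newtonian capacity of $A$ viewed as a subset of $\mathbb{Z}^d$).

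The second step is to invoke the standard discrete isoperimetric inequality for capacity on $\mathbb{Z}^d$, $d\ge 3$: there is $c = c(d)>0$ such that $\operatorname{cap}_{\mathbb{Z}^d}(A)\ge c|A|^{(d-2)/d}$ for every finite $A\subset\mathbb{Z}^d$. This is classical; see for instance Lawler's book \cite{L}, where the upper bound $\operatorname{cap}(A) \le C|A|^{(d-2)/d}$ is also given, the lower bound being the easier direction. If one prefers a self-contained argument, the cleanest route is via the variational (Dirichlet) characterization of capacity together with the discrete Sobolev inequality $\|f\|_{L^{2d/(d-2)}(\mathbb{Z}^d)} \le C\|\nabla f\|_{L^2(\mathbb{Z}^d)}$: taking $f$ to be the equilibrium potential of $A$, which equals $1$ on $A$, one has $|A|^{(d-2)/d} \le \|f\|_{L^{2d/(d-2)}}^2 \le C^2 \|\nabla f\|_{L^2}^2 = C^2 \operatorname{cap}(A)$ up to the correct normalization constants for the simple random walk generator $\Delta = 2d(P-I)$. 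Combining with Step 1 gives \eqref{Claim_Cap}.

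The main obstacle I anticipate is the careful bookkeeping in Step 1: one must check that passing from $D_N$ (with its mixed boundary conditions — killing at $\{0,N\}$ in the first coordinate, periodic in the others) to $\mathbb{Z}^d$ genuinely only helps, and that the sum over $A\subset D_N^\circ$ matches the sum over a fundamental domain of the lift $\widetilde A\subset\mathbb{Z}^d$ without overcounting. Since $A$ lives in $D_N^\circ$ and its lift $\widetilde A$ consists of disjoint translates $A + (0,kN)$, $k\in\mathbb{Z}^{d-1}$, one has $|\widetilde A \cap (\text{one period})| = |A|$, and by periodicity $e_A(x)\ge P_x^{RW^d}(\text{never hit }\widetilde A)$ for each $x$ in that period; hence $\operatorname{cap}_{D_N}(A) = \sum_{x\in A} e_A(x) \ge \sum_{x\in A} P_x^{RW^d}(\widetilde T_{\widetilde A}=\infty) = \operatorname{cap}_{\mathbb{Z}^d}(\widetilde A \cap \text{period}) \ge \operatorname{cap}_{\mathbb{Z}^d}(A)$, where the last inequality uses that capacity of a set is at least the "local" contribution of a subset when the rest only shields it further — more simply, one may just bound $\operatorname{cap}_{D_N}(A)$ below by summing the same escape probabilities for $A$ regarded directly inside $\mathbb{Z}^d$. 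Everything else — the Sobolev step or the citation to \cite{L} — is routine, so the proof is short once the domain-monotonicity reduction is set up correctly.
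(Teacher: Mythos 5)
Your Step 2 (the isoperimetric bound $\operatorname{cap}_{\mathbb{Z}^d}(A)\geq c|A|^{(d-2)/d}$ for $d\geq 3$, via the variational characterization and a discrete Sobolev inequality, or by citation) is fine; the paper instead interpolates the equilibrium potential to compare with Newtonian capacity and invokes Poincar\'e--Faber--Szeg\"o, but either route works. The genuine gap is in Step 1, the reduction $\operatorname{cap}_{D_N}(A)\geq\operatorname{cap}_{\mathbb{Z}^d}(A)$. Unfolding the cylinder walk, what you correctly get is $e_A^{D_N}(x)=P_x^{RW^d}\left(T_{\partial\tilde D_N}<T_{\widetilde A}\right)\geq P_x^{RW^d}\left(T_{\widetilde A}=\infty\right)$, where $\widetilde A=\bigcup_{k}\left(A+(0,kN)\right)$ is the \emph{full periodic lift} (the first inequality uses recurrence of the first coordinate). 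But your next step, ``$\sum_{x\in A}P_x(T_{\widetilde A}=\infty)=\operatorname{cap}_{\mathbb{Z}^d}(\widetilde A\cap\text{period})\geq\operatorname{cap}_{\mathbb{Z}^d}(A)$'', is wrong on both counts: escape probabilities from $\widetilde A$ are \emph{smaller} than those from the single copy $A$ (adding the other copies shields the walk more, not less), so the inequality direction is backwards, and the quantity you actually control, $\sum_{x\in A}P_x(T_{\widetilde A}=\infty)$, can even vanish identically --- take $A=\{N/2\}\times\mathbb{T}_N^{d-1}$, so that $\widetilde A$ contains a whole hyperplane and the $\mathbb{Z}^d$-walk returns to it almost surely, while $\operatorname{cap}_{D_N}(A)\asymp N^{d-2}>0$. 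Your fallback (``just sum the escape probabilities for $A$ regarded directly inside $\mathbb{Z}^d$'') fails for the same reason: never hitting the single copy of $A$ does not imply reaching $\partial\tilde D_N$ before hitting one of the other periodic copies, so the required event inclusion is false.

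This is exactly the difficulty the paper's proof is built to handle: it first uses monotonicity of capacity to replace $A$ by its largest piece $A_{\mathbf i}$ inside one of $3^{d-1}$ sub-boxes of the torus of side $\sim N/3$ (losing only a dimensional constant in $|A|$), so that all nontrivial periodic copies are at distance of order $N$; and even then the comparison obtained is only $\operatorname{cap}_{D_N}(A)\geq c\operatorname{cap}_{\mathbb{Z}^d}(A)$ with some $c<1$, proved by an excursion argument: the $\mathbb{Z}^d$ escape event is split according to whether the walk leaves $S_{0,2M-1}$ through the cylinder ends or through the lateral part $S_{2M}$, and in the latter case the uniform Brownian-motion exit estimate (\ref{exit_est}) shows the walk subsequently exits $S_{M+1,3M}$ through the ends with probability bounded below, without ever approaching another copy of $A$; only then can one pass to the cylinder. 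Some argument of this type (localization in the torus direction plus a quantitative exit-through-the-ends estimate) is needed; a one-line domain-monotonicity reduction does not exist here, so as written your proof does not go through.
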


\begin{proof}
We will use $c>0$ as a notation for a generic positive (small) constant which
depends only on the dimension and which may change from line to line. In the
course of the proof, we need two other capacities. First the discrete capacity
on $\mathbb{Z}^{d}$: For a finite subset $A\subset\mathbb{Z}^{d},$%
\[
\operatorname*{cap}\nolimits_{\mathbb{Z}^{d}}\left(  A\right)  :=\sum_{x\in
A}P_{x}^{RW^{d}}\left(  T_{A}=\infty\right)  ,
\]
where the random walk here is the standard random walk on $\mathbb{Z}^{d}$. We
will compare $\operatorname*{cap}\nolimits_{D_{N}}$ with $\operatorname*{cap}%
\nolimits_{\mathbb{Z}^{d}}$ and then the latter with the usual Newtonian capacity.

We assume (for simplicity), that $N-3$ is divisible by $6:N=3\left(
2M+1\right)  $ and identify $\mathbb{T}_{N}$ with $\left\{  3M-1,\ldots
,3M+1\right\}  $. Then, subdivide $\mathbb{T}_{N}$ into the $3$ subintervals
$J_{-1}:=\left\{  -3M-1,\ldots,-M-1\right\}  ,\ J_{0}:=\left\{  -M,\ldots
,M\right\}  ,\ J_{1}:=\left\{  M+1,\ldots,3M+1\right\}  $, and $\mathbb{T}%
_{N}^{d-1}$ into the $3^{d-1}$ subboxes $R_{\mathbf{i}}:=J_{i_{1}}\times
\cdots\times J_{i_{d-1}}$, $\mathbf{i}=\left(  i_{1},\ldots,i_{d-1}\right)
\in\left\{  -1,0,1\right\}  ^{d-1}$, and for given $A\subset D_{N}^{\circ}$,
we consider%
\[
A_{\mathbf{i}}:=A\cap\left(  \left[  1,N-1\right]  \times R_{\mathbf{i}%
}\right)  ,
\]
where $\left[  1,N-1\right]  \overset{\mathrm{def}}{=}\left\{  1,\ldots
,N-1\right\}  $. From the monotonicity of the capacity, we get%
\[
\operatorname*{cap}\nolimits_{D_{N}}\left(  A\right)  \geq\operatorname*{cap}%
\nolimits_{D_{N}}\left(  A_{\mathbf{i}}\right)
\]
for every choice of $\mathbf{i}$. We choose $\mathbf{i}$ such that $\left\vert
A_{\mathbf{i}}\right\vert $ is maximal. If we can prove%
\[
\operatorname*{cap}\nolimits_{D_{N}}\left(  A_{\mathbf{i}}\right)  \geq
c\left\vert A_{\mathbf{i}}\right\vert ^{\left(  d-2\right)  /d},
\]
then we obtain (\ref{Claim_Cap}) with an adjustment of $c.$ We therefore can
restrict to sets $A$ which are contained in one of the sets $\left\{
1,\ldots,N-1\right\}  \times R_{\mathbf{i}}$, and we may assume that
$\mathbf{i}=\left(  0,\ldots,0\right)  $ i.e. $A$ is contained in the middle
subbox. As we have periodic boundary conditions on $\mathbb{T}^{d-1}$, this is
no loss of generality.

We can then view $A$ also as a subset of $\mathbb{Z}^{d}$ by the
identification $\mathbb{T}_{N}^{d-1}=\left[  3M-1,3M+1\right]  ^{d-1}$%
\break$\subset\mathbb{Z}^{d-1}$. We now claim that for such an $A$ one has%
\begin{equation}
\operatorname*{cap}\nolimits_{D_{N}}\left(  A\right)  \geq
c\operatorname*{cap}\nolimits_{\mathbb{Z}^{d}}\left(  A\right)
.\label{Cap_Comparison}%
\end{equation}

We denote by $\left\Vert \cdot\right\Vert _{d-1,\infty}$ the subnorm in
$\mathbb{Z}^{d-1}.$ We also write for $0\leq k\leq l$%
\begin{align*}
&  S_{k,l}\overset{\mathrm{def}}{=}\left[  1,N-1\right]  \times\left\{
x\in\mathbb{Z}^{d-1}:k\leq\left\Vert x\right\Vert _{d-1,\infty}\leq l\right\}
,\\
&  \hat{S}_{k,l}\overset{\mathrm{def}}{=}\left\{  0,N\right\}  \times\left\{
x\in\mathbb{Z}^{d-1}:k\leq\left\Vert x\right\Vert _{d-1,\infty}\leq l\right\}
.
\end{align*}
For $k=l$, we write $S_{k}$ instead of $S_{k,k}$. So $A\subset S_{0,M}$. The
boundary of $S_{M+1,3M}$, regarded as a subset of $\mathbb{Z}^{d}$ consists of
the three parts $\hat{S}_{M+1,3M}$,$\ S_{M},\ S_{3M+1}.$ An evident fact is%
\begin{equation}
P_{x}^{RW^{d}}\left(  X_{\tau_{S_{M+1,3M}}}\in\hat{S}_{M+1,3M}\right)  \geq
c>0,\ x\in S_{2M},\label{exit_est}%
\end{equation}
where $\tau_{S}$ is the first exit time from $S$ of a random walk $\left\{
X_{n}\right\}  ,$ starting in $x$. This follows for instance from the weak
convergence of the random walk path to Brownian motion, and the elementary
fact that for a $d$-dimensional Brownian motion starting in $0$, the first
exit from a cylinder $\left[  -\gamma,\gamma\right]  \times\left\{
x\in\mathbb{R}^{d-1}:\left\vert x\right\vert \leq1\right\}  $ through
$\left\{  -\gamma,\gamma\right\}  \times\left\{  \left\vert x\right\vert
\leq1\right\}  $ has probability $p\left(  d,\gamma\right)  >0$.

Consider now a random walk on $\mathbb{Z}^{d}$ starting at $x\in A$. The
escape probability $e_{A}\left(  x\right)  $ to $\infty$ can be bounded as
follows%
\begin{align}
e_{A}\left(  x\right)   &  =P_{x}^{RW^{d}}\left(  T_{A}=\infty\right)  \leq
P_{x}^{RW^{d}}\left(  \tau_{S_{0,2M-1}}<T_{A}\right) \nonumber\\
&  =P_{x}^{RW^{d}}\left(  \tau_{S_{0,2M-1}}<T_{A},X\left(  \tau_{S_{0,2M-1}%
}\right)  \in\hat{S}_{0,2M-1}\right) \nonumber\\
&  +P_{x}^{RW^{d}}\left(  \tau_{S_{0,2M-1}}<T_{A},X\left(  \tau_{S_{0,2M-1}%
}\right)  \in S_{2M}\right) \label{exit_est2}\\
&  \leq P_{x}^{RW^{d}}\left(  \tau_{S_{0,3M}}<T_{A},X\left(  \tau_{S_{0,3M}%
}\right)  \in\hat{S}_{0,3M}\right) \nonumber\\
&  +P_{x}^{RW^{d}}\left(  \tau_{S_{0,2M-1}}<T_{A},X\left(  \tau_{S_{0,2M-1}%
}\right)  \in S_{2M}\right)  .\nonumber
\end{align}
The inequality is coming from the fact that on $\left\{  \tau_{S_{0,2M-1}%
}<T_{A},X\left(  \tau_{S_{0,2M-1}}\right)  \in\hat{S}_{0,2M-1}\right\}  $ one
has $\left\{  \tau_{S_{0,2M-1}}=\tau_{S_{0,3M}}\right\}  $. We estimate the
second summand on the right hand side by (\ref{exit_est}). For abbreviation,
we set $\tau_{1}\overset{\mathrm{def}}{=}\tau_{S_{0,2M-1}}$ and $\tau_{2}%
=\tau_{S_{M+1,3M}}.$ Then, denoting by $\theta_{\tau_{1}}$ the shift operator
by $\tau_{1}$, we have%
\[
\left\{  \tau_{1}<T_{A},\ X_{\tau_{1}}\in S_{2M},\ X_{\tau_{2}}\circ
\theta_{\tau_{1}}\in\hat{S}_{M+1,3M}\right\}  \subset\left\{  \tau_{S_{0,3M}%
}<T_{A},\ X_{S_{0,3M}}\in\hat{S}_{0,3M}\right\}  ,
\]
and therefore, by the strong Markov property, and (\ref{exit_est})%
\begin{align*}
&  P_{x}\left(  \tau_{S_{0,3M}}<T_{A},X_{S_{0,3M}}\in\hat{S}_{0,3M}\right) \\
&  \geq P_{x}\left(  \tau_{1}<T_{A},\ X_{\tau_{1}}\in S_{2M},\ X_{\tau_{2}%
}\circ\theta_{\tau_{1}}\in\hat{S}_{M+1,3M}\right) \\
&  =E_{x}\left(  1_{\left\{  \tau_{1}<T_{A},\ X_{\tau_{1}}\in S_{2M}\right\}
}E_{X_{\tau_{1}}}\left(  X_{\tau_{2}}\in\hat{S}_{M+1,3M}\right)  \right) \\
&  \geq cP_{x}\left(  \tau_{1}<T_{A},\ X_{\tau_{1}}\in S_{2M}\right) .
\end{align*}
Combining this with (\ref{exit_est2}) gives%
\[
e_{A}\left(  x\right)  \leq\left(  1+c^{-1}\right)  P_{x}\left(
\tau_{S_{0,3M}}<T_{A},X_{S_{0,3M}}\in\hat{S}_{0,3M}\right)  .
\]
If for the random walk on $\mathbb{Z}^{d},$ one has $\tau_{S_{0,3M}}%
<T_{A},X_{S_{0,3M}}\in\hat{S}_{0,3M}$, then the random walk on $D_{N}^{\circ
}=\left[  1,N-1\right]  \mathbb{\times T}^{d-1}$ obtained through periodizing
the torus part reaches $\partial D_{N}$ before returning to $A$. Therefore%
\[
P_{x}\left(  \tau_{S_{0,3M}}<T_{A},X_{S_{0,3M}}\in\hat{S}_{0,3M}\right)  \leq
e_{A}^{\mathbb{T}^{d-1}}\left(  x\right)  .
\]
Summing over $x\in A$, this implies (\ref{Cap_Comparison}) (with a changed $c
$).

In order to prove the lemma, it therefore remains to prove that for a finite
subset $A\subset\mathbb{Z}^{d}$, we have%
\[
\operatorname*{cap}\nolimits_{\mathbb{Z}^{d}}\left(  A\right)  \geq
c\left\vert A\right\vert ^{\left(  d-2\right)  /d}.
\]

We denote by $\left(  k_{1},\ldots,k_{d}\right)  ,\ k_{i}\in\left\{
0,1\right\}  $ the $2^{d}$ corner points of a unit box in $\mathbb{Z}^{d}$
spanned by the unit vectors $e_{1},\ldots,e_{d},$ and we write $Q\subset
\mathbb{R}^{d}$ for the closed unit box itself. The discrete translations are
$Q_{y}:=y+Q,$ $y\in\mathbb{Z}^{d}.$ Set
\[
\bar{A}:=\bigcup_{k\in\left\{  0,1\right\}  ^{d}}\left(  A+k\right)
\subset\mathbb{Z}^{d},\ \hat{A}:=\bigcup_{x\in A}Q_{x}\subset\mathbb{R}^{d}.
\]
By the subadditivity and shift invariance of the discrete capacity, we have%
\[
\operatorname*{cap}\nolimits_{\mathbb{Z}^{d}}\left(  A\right)  \geq
2^{-d}\operatorname*{cap}\nolimits_{\mathbb{Z}^{d}}\left(  \bar{A}\right)  .
\]
Define $\phi$ to be the discrete harmonic extension of $1_{\bar{A}}$, i.e.%
\[
\phi\left(  x\right)  =P_{x}\left(  S_{\bar{A}}<\infty\right)  ,
\]
where%
\[
S_{\bar{A}}:=\inf\left\{  n\geq0:X_{n}\in\bar{A}\right\}  ,
\]
$\left\{  X_{n}\right\}  _{n\geq0}$ being the symmetric nearest neighbor
random walk on $\mathbb{Z}^{d}$. $\phi$ is discrete harmonic outside $\bar{A}
$ and satisfies $\lim_{\left\vert x\right\vert \rightarrow\infty}\phi\left(
x\right)  =0$ as $d\geq3$. We write%
\[
\delta\phi\left(  x\right)  :=\left(  \delta_{i}\phi\left(  x\right)  \right)
_{i=1,\ldots,d},\ \delta_{i}\phi\left(  x\right)  :=\phi\left(  x+e_{i}%
\right)  -\phi\left(  x\right)  .
\]
It is well known that the discrete lattice capacity satisfies%
\[
\operatorname*{cap}\nolimits_{\mathbb{Z}^{d}}\left(  \bar{A}\right)  =\frac
{1}{2d}\sum_{x}\left\vert \delta\phi\left(  x\right)  \right\vert ^{2}%
=\frac{1}{2d}\inf\left\{  \sum\nolimits_{x}\left\vert \delta\psi\left(
x\right)  \right\vert ^{2}:\psi\geq1_{\bar{A}}\right\}  .
\]

We interpolate $\phi$ on each of the boxes $Q_{y},\ y\in\mathbb{Z}^{d},$ to a
continuous function $\hat{\phi}:\mathbb{R}^{d}\rightarrow\left[  0,1\right]  $
by defining for $y+x\in\mathbb{R}^{d}$, $y\in\mathbb{Z}^{d},\ x\in Q:$%
\[
\hat{\phi}\left(  y+x\right)  :=\sum_{k\in\left\{  0,1\right\}  ^{d}}%
\prod_{i=1}^{k}x_{i}^{\left(  k_{i}\right)  }\phi\left(  y+\left(
k_{1},\ldots,k_{d}\right)  \right)  ,
\]
where%
\[
x_{i}^{\left(  k_{i}\right)  }=\left\{
\begin{array}
[c]{cc}%
1-x_{i} & \mathrm{for\ }k_{i}=0\\
x_{i} & \mathrm{for\ }k_{i}=1
\end{array}
\right.  .
\]
By the construction, $\hat{\phi}$ is uniquely defined also on the
intersections of different boxes. It is evident that%
\begin{equation}
\hat{\phi}\geq1_{\hat{A}}\label{domination}%
\end{equation}
because for $x\in A$, all corner points of $Q_{x}$ belong to $\bar{A}$ on
which $\phi$ is $1$. The partial derivatives inside of box $Q_{y}$ are%
\begin{align*}
\frac{\partial\hat{\phi}}{\partial x_{i}}\left(  y+x\right)   &  =\sum
_{k_{1},\ldots,k_{i-1},k_{i+1},\ldots,k_{d}\in\left\{  0,1\right\}  }%
\prod_{j:j\neq i}x_{j}^{\left(  k_{j}\right)  }\Big [\\
&  -\phi\left(  y+\left(  k_{1},\ldots,k_{i-1},0,k_{i+1},\ldots,k_{d}\right)
\right)  \Big ]
\end{align*}
From this representation, it follows that with some constant $C\left(
d\right)  >0$%
\[
\frac{1}{2}\int_{\mathbb{R}^{d}}\left\vert \nabla\hat{\phi}\left(  x\right)
\right\vert ^{2}dx\leq C\left(  d\right)  \sum_{x\in\mathbb{Z}^{d}}\left\vert
\delta\phi\left(  x\right)  \right\vert ^{2}=C\left(  d\right)
\operatorname*{cap}\nolimits_{\mathbb{Z}^{d}}\left(  \bar{A}\right)  .
\]
The Newtonian capacity of a compact subset $K\subset\mathbb{R}^{d}$ is defined
by%
\[
\operatorname*{cap}\nolimits_{d}\left(  K\right)  \overset{\mathrm{def}}%
{=}\inf\left\{  \frac{1}{2}\int\left\vert \nabla\psi\left(  x\right)
\right\vert ^{2}dx:\psi\in H_{1}\left(  \mathbb{R}^{d}\right)  ,\ \psi
\geq1_{\hat{A}}\right\}  ,
\]
where $H_{1}$ is the Sobolev space of weakly once differentiable functions on
$\mathbb{R}^{d}$ with square integrable derivative. Using (\ref{domination}),
we get%
\begin{align}
\operatorname*{cap}\nolimits_{d}\left(  \hat{A}\right)   &  \leq C\left(
d\right)  \operatorname*{cap}\nolimits_{\mathbb{Z}^{d}}\left(  \bar{A}\right)
\label{Cap_discr_cont}\\
&  \leq2^{d}C\left(  d\right)  \operatorname*{cap}\nolimits_{\mathbb{Z}^{d}%
}\left(  A\right)  .\nonumber
\end{align}
By the Poincar\'{e}-Faber-Szeg\"{o} inequality for the Newtonian capacity (see
\cite{PS} for $d=3,$ and \cite{Jauregui}, Appendix A for general $d\geq3 $),
one has with some new constant $c>0$%
\[
\operatorname*{cap}\nolimits_{d}\left(  \hat{A}\right)  \geq c\left(
\operatorname*{vol}\left(  \hat{A}\right)  \right)  ^{\left(  d-2\right)
/d}=c\left\vert A\right\vert ^{\left(  d-2\right)  /d},
\]
which, together with (\ref{Cap_discr_cont}) proves the claim.
\end{proof}

\section{The large deviation estimate: Proof of (\ref{eq:1.11})}

\subsection{Preliminaries} \label{section:6.1}

\textbf{Convention: }All statements we make are only claimed to be true for
large enough $N$ without special mentioning.

\textbf{Markov property}: Let $\mu_{\Lambda}$ be the probability measure of
the free field, that is the Gaussian field without pinning,
 on a finite subset $\Lambda$ of the cylinder $\mathbb{Z\times
T}_{N}^{d}$, with arbitrary boundary conditions on $\partial\Lambda$, and let
$B\subset\Lambda$. We write $\mathcal{F}_{A}$ for $\sigma\left(  \phi_{i}:i\in
A\right)  $. Then for any $X\in\mathcal{F}_{B}$ we have%
\begin{equation}
\mu_{\Lambda}\left(  \left.  X\right\vert \mathcal{F}_{B^{c}}\right)
=\mu_{\Lambda}\left(  \left.  X\right\vert \mathcal{F}_{\partial B\cap\Lambda
}\right)  . \label{Markov}%
\end{equation}

\textbf{FKG-inequality}: Let $G:\mathbb{R}^{\Lambda}\rightarrow\mathbb{R}$ be
a measurable function which is non-decreasing in all arguments, and let
$\mu_{\Lambda,\mathbf{x}}$ be the free field on $\Lambda$ with boundary
condition $\mathbf{x}\in\mathbb{R}^{\partial\Lambda}$. The FKG-property states
that $\int G~d\mu_{\Lambda,\mathbf{x}}$ is nondecreasing as a function of
$\mathbf{x}\in\mathbb{R}^{\partial\Lambda}$ in all coordinates.

We will use the expansion%
\begin{equation}
\mu_{N}^{aN,bN,\varepsilon}=\sum_{A\subset D_{N}^{\circ}}p_{N}^{\varepsilon
}\left(  A\right)  \mu_{A}^{aN,bN},\label{Main_Expansion}%
\end{equation}
where $\mu_{A}$ is the standard free field with boundary condition $0$ on
$\partial A\cap D_{N}^{\circ}$ and $aN$, respectively $bN$ on $\partial D_{N},$
extended by the Dirac measure at $0$ on $A^{c}\overset{\mathrm{def}}{=}%
D_{N}^{\circ}\backslash A$, and where%
\[
p_{N}^{\varepsilon}\left(  A\right)  =\frac{\varepsilon^{\left\vert
A^{c}\right\vert }Z_{A}^{aN,bN}}{Z_{A}^{aN,bN,\varepsilon}}%
\]
$\left\{  p_{N}^{\varepsilon}\left(  A\right)  \right\}  _{A\subset D_{N}^{\circ}%
}$ is a probability distribution on the set of subsets of $D_{N}^{\circ}$.

We write%
\[
A_{N,\alpha}\overset{\mathrm{def}}{=}\left\{  \operatorname{dist}_{L^{1}%
}(h^{N},\{\hat{h},\bar{h}\})\geq N^{-\alpha}\right\}  ,
\]
so, in order to prove (\ref{eq:1.11}), we have to prove $\mu_{N}^{\varepsilon
}\left(  A_{N,\alpha}\right)  \rightarrow0$ for small enough $\alpha$. Let%
\[
\Omega_{N}^{+}\overset{\mathrm{def}}{=}\left\{  \phi_{i}\geq-\log N,\ \forall
i\in D_{N}^{\circ}\right\}  .
\]

\begin{lemma}%
\[
\lim_{N\rightarrow\infty}\mu_{N}^{aN,bN,\varepsilon}\left(  \Omega_{N}%
^{+}\right)  =1.
\]

\end{lemma}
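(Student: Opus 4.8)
The plan is to exploit the decomposition \eqref{Main_Expansion} and reduce the statement to a uniform Gaussian tail bound. Since $\{p_N^\varepsilon(A)\}_{A\subset D_N^\circ}$ is a probability distribution, it is enough to show that $\mu_A^{aN,bN}(\Omega_N^+)\to 1$ as $N\to\infty$ uniformly in $A\subset D_N^\circ$; averaging against the weights $p_N^\varepsilon(A)$ then gives the claim.

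So fix $A\subset D_N^\circ$ and analyse $\mu_A^{aN,bN}$. On $A^c=D_N^\circ\setminus A$ the field is identically $0$, hence trivially $\ge-\log N$ there. On $A$ the vector $(\phi_i)_{i\in A}$ is Gaussian with mean $m_A=(m_A(i))_{i\in A}$, the discrete harmonic function on $A$ with boundary values $0$ on $\partial A\cap D_N^\circ$ and $aN$, $bN$ on $\partial_LD_N$, $\partial_RD_N$ respectively. Since $a,b>0$, all of these boundary data are nonnegative, and therefore by the maximum principle $m_A(i)\ge0$ for every $i\in A$. Moreover the covariance of $\mu_A^{aN,bN}$ is $\tfrac1{2d}(I-P_A)^{-1}$, where $P_A$ is the simple random walk kernel on $D_N^\circ$ killed upon exiting $A$; by monotonicity of the Green's function in the domain, $\operatorname{Var}_{\mu_A^{aN,bN}}(\phi_i)=\tfrac1{2d}G_A(i,i)\le\tfrac1{2d}G_N(i,i)\le\tfrac{c_N}{2d}$ for $i\in A$, and by Lemma \ref{lem:3}(1) (this is where the assumption $d\ge3$ is used) this is bounded by a constant $C$ independent of $N$ and $A$.

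What remains is a union bound together with the elementary Gaussian estimate $P(\mathcal N(\mu,\sigma^2)\le-x)\le e^{-x^2/(2\sigma^2)}$, valid for $\mu\ge0$ and $x>0$. With $\mu=m_A(i)\ge0$, $\sigma^2=\operatorname{Var}(\phi_i)\le C$ and $x=\log N$ we get $\mu_A^{aN,bN}(\phi_i<-\log N)\le\exp\!\big(-(\log N)^2/(2C)\big)$ for each $i\in A$, and summing over the at most $N^d$ points of $A$,
\[
1-\mu_A^{aN,bN}(\Omega_N^+)\le|A|\,\exp\!\Big(-\frac{(\log N)^2}{2C}\Big)\le\exp\!\Big(d\log N-\frac{(\log N)^2}{2C}\Big),
\]
which tends to $0$ as $N\to\infty$, uniformly in $A$, because $(\log N)^2$ eventually dominates $d\log N$. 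This completes the argument.

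I do not expect a genuine obstacle here. The only two points needing a word of care are: (i) the uniform-in-$(A,N)$ bound on the conditional variances, which is precisely Lemma \ref{lem:3}(1) combined with the domain monotonicity of the Green's function, and which does rely on transience ($d\ge3$); and (ii) the nonnegativity of the conditional mean $m_A$, which is just the maximum principle applied to the nonnegative boundary data $0,aN,bN$. (Alternatively (ii) could be replaced by an FKG comparison $\mu_A^{aN,bN}(\Omega_N^+)\ge\mu_A^{0,0}(\Omega_N^+)$, reducing to the zero-boundary Gaussian field, but this saves nothing essential.) Everything else is the routine union bound above.
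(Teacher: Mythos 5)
Your proof is correct and follows essentially the same route as the paper: the expansion \eqref{Main_Expansion}, a union bound over the $N^{d}$ sites, a Gaussian tail estimate, and the uniform bound on the variances via transience ($d\geq3$, Lemma \ref{lem:3}-(1)). The only cosmetic difference is that you handle the positive boundary data by noting via the maximum principle that the mean is nonnegative, whereas the paper invokes FKG to compare with zero boundary conditions — the variant you yourself mention parenthetically.
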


\begin{proof}
We use
\begin{align*}
\mu_{A}^{aN,bN}\left(  (\Omega_{N}^{+})^c \right)   &  \leq N^{d}\sup_{i\in A}%
\mu_{A}^{aN,bN}\left(  \phi_{i}\leq-\log N\right)  \\
&  \leq N^{d}\sup_{i\in A}\mu_{A}^{0}\left(  \phi_{i}\leq-\log N\right)  \\
&  \leq N^{d}\sup_{i\in A}\exp\left[  -\frac{\left(  \log N\right)  ^{2}%
}{2G_{A}\left(  i,i\right)  }\right]  \leq N^{d}\exp\left[  -\frac{\left(
\log N\right)  ^{2}}{2C}\right],
\end{align*}
where $\mu_{A}^{0}$ has boundary conditions $0$ on $A^{c}$ (and not just on
$A^{c}\cap D_{N}^{\circ}$). In the last inequality, we have used $G_{A}\left(
i,i\right)  \leq G_{\mathbb{Z}^{d}}\left(  i,i\right)  =C<\infty$ as we assume
$d\geq3.$ For the second inequality, we use FKG and $a,b\geq0.$
Combining with \eqref{Main_Expansion} shows the conclusion.
\end{proof}

Using this lemma, it suffices to prove%
\begin{equation}
\lim_{N\rightarrow\infty}\mu_{N}^{aN,bN,\varepsilon}\left(  A_{N,\alpha}%
\cap\Omega_{N}^{+}\right)  =0\label{LDP_main}%
\end{equation}
for $\alpha$ chosen sufficiently small.

We will consider the random fields on an extended set%
\[
D_{N,\mathrm{ext}}\overset{\mathrm{def}}{=}\left\{  -N,-N+1,\ldots,2N\right\}
\times\mathbb{T}_{N}^{d-1},
\]
with%
\[
D_{N,\mathrm{ext}}^{\circ}\overset{\mathrm{def}}{=}\left\{  -N+1,\ldots
,2N-1\right\}  \times\mathbb{T}_{N}^{d-1},
\]%
\[
\partial D_{N,\mathrm{ext}}\overset{\mathrm{def}}{=}\left\{  -N,2N\right\}
\times\mathbb{T}_{N}^{d-1},\ \check{D}_{N,\mathrm{\mathrm{ext}}}%
\overset{\mathrm{def}}{=}D_{N,\mathrm{ext}}^{\circ}\backslash D_{N}^{\circ}.
\]

We define the measure $\mu_{N,\mathrm{ext}}^{\varepsilon}$ on $\mathbb{R}%
^{D_{N,\mathrm{ext}}}$ with $0$ boundary conditions on $\partial
D_{N,\mathrm{ext}}$ and $\varepsilon$-pinning on $D_{N}^{\circ},$ i.e.%
\begin{align*}
\mu_{N,\mathrm{ext}}^{\varepsilon}\left(  d\phi\right)  =& \frac{1}%
{Z_{N,\mathrm{ext}}^{\varepsilon}}\exp\left[  -\frac{1}{2}\sum_{\left\langle
i,j\right\rangle \subset D_{N,\mathrm{ext}}}\left(  \phi_{i}-\phi_{j}\right)
^{2}\right]  \\
& \times\prod_{i\in D_{N}^{\circ}}\left(  d\phi_{i}+\varepsilon\delta_{0}\left(
d\phi_{i}\right)  \right) 
\prod_{i\in D_{N,\mathrm{ext}}^{\circ}\backslash D_{N}^{\circ}}d\phi
_{i},\quad
 \phi\equiv0\ \mathrm{on\ }\partial D_{N,\mathrm{ext}}.
\end{align*}
$\mu_{N,\mathrm{ext}}$ is the usual Gaussian field corresponding to
$\varepsilon=0$. The reader should pay attention to the fact that pinning for
$\mu_{N,\mathrm{ext}}^{\varepsilon}$ is \textit{only }on $D_{N}^{\circ}$.

We write $\mathbb{F}$ for the set of subsets of $D_{N,\mathrm{ext}}^{\circ}$
satisfying $\check{D}_{N,\mathrm{\mathrm{ext}}}\subset F$. For $F\in
\mathbb{F}$ we write $\mu_{F}^{0}$ for the Gaussian field on $\mathbb{R}^{F}$
with $0$ boundary condition on $\partial F$. It is sometimes convenient to
extend $\mu_{F}$ to $\mathbb{R}^{D_{N,\mathrm{ext}}}$ by multiplying it with
$\prod_{i\notin F}\delta_{0}\left(  d\phi_{i}\right)  .$ Remark that $\partial
D_{N}\subset F.$

We need the following lemma for the proof of Lemma \ref{lemma:6.5} below.

\begin{lemma}  \label{lem:maxmimum-a} 
Let $F\in\mathbb{F}$, and $s,t>0$ satisfy
$s>t/2,t>s/2$. Let $\psi_{F}:F\cup\partial F\rightarrow\mathbb{R}$ be a
function which minimizes $H(\psi)$ subject to the boundary conditions $0$ at
$\partial F,\psi_{F}\geq s$ on $\partial_{L}D_{N}$, $\psi_{F}\geq t$ on
$\partial_{R}D_{N}$. Then $\psi_{F}$ is unique, and is the harmonic function on
$F\backslash\partial D_{N}$ with boundary condition $0$ on $\partial F,\ s$ on
$\partial_{L}D_{N}$ and $t$ on $\partial_{R}D_{N}$.

Furthermore, one has
\begin{equation}
\Delta\psi_{F}(i)=\sum_{j:|i-j|=1}(\psi_{F}(j)-\psi_{F}(i))\leq0,\quad
i\in\partial D_{N}. \label{eq:maximumprinciple}%
\end{equation}

\end{lemma}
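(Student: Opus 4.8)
The plan is to identify $\psi_F$ explicitly and to read off \eqref{eq:maximumprinciple} from that identification. Write $\overline F=F\cup\partial F$ and let $H(\psi)=\tfrac12\sum_{\langle i,j\rangle\subset\overline F}(\psi_i-\psi_j)^2$ be the energy to be minimised over the closed convex set $\mathcal C$ of $\psi:\overline F\to\mathbb R$ with $\psi\equiv0$ on $\partial F$, $\psi\geq s$ on $\partial_L D_N$ and $\psi\geq t$ on $\partial_R D_N$. Since each vertex of $F$ is joined within $\overline F$ to $\partial F$ (note $\partial D_{N,\mathrm{ext}}\subset\partial F$), the form $H$ is positive definite on $\{\psi:\psi|_{\partial F}=0\}$, hence strictly convex and coercive on $\mathcal C$, so a unique minimiser $\psi_F$ exists. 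Let $h:\overline F\to\mathbb R$ be the harmonic function on $F\setminus\partial D_N$ with $h=0$ on $\partial F$, $h=s$ on $\partial_L D_N$, $h=t$ on $\partial_R D_N$ (well defined by the maximum principle); note $h\geq0$ on $\overline F$ by the minimum principle. I would first prove $\Delta h\leq0$ on $\partial D_N$, and then use a summation-by-parts identity to conclude $\psi_F=h$.

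For the first step the key device is the comparison profile $\psi^{\ast}:D_{N,\mathrm{ext}}\to\mathbb R$ depending only on the first coordinate, piecewise linear with values $0,s,t,0$ at $i_1=-N,0,N,2N$ respectively. Then $\psi^{\ast}\geq0$, $\Delta\psi^{\ast}=0$ at every vertex of $D_{N,\mathrm{ext}}^{\circ}$ with $i_1\notin\{0,N\}$, while at the two kinks
\[
\Delta\psi^{\ast}(i)=\frac{t-2s}{N}\ \ (i\in\partial_L D_N),\qquad
\Delta\psi^{\ast}(i)=\frac{s-2t}{N}\ \ (i\in\partial_R D_N),
\]
which are $\leq0$ exactly because $s>t/2$ and $t>s/2$. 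Thus $\psi^{\ast}$ is superharmonic on $D_{N,\mathrm{ext}}^{\circ}\supset F\setminus\partial D_N$, so $h-\psi^{\ast}$ is subharmonic there, is $\leq0$ on $\partial F$ (since $h=0\leq\psi^{\ast}$) and vanishes on $\partial D_N$ (since $h=\psi^{\ast}$ there); by the maximum principle $h\leq\psi^{\ast}$ on all of $\overline F$. For $i\in\partial_L D_N$ the torus-direction neighbours of $i$ again lie in $\partial_L D_N$, where $h=s=\psi^{\ast}$, so $\Delta h(i)-\Delta\psi^{\ast}(i)=\sum_{j:|i-j|=1}(h(j)-\psi^{\ast}(j))\leq0$, whence $\Delta h(i)\leq\Delta\psi^{\ast}(i)\leq0$; the case $i\in\partial_R D_N$ is identical. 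This gives \eqref{eq:maximumprinciple} for $h$.

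It then remains to check $\psi_F=h$. For $\psi\in\mathcal C$ set $\phi=\psi-h$, so $\phi=0$ on $\partial F$ and $\phi\geq0$ on $\partial D_N$. Discrete Green's identity (legitimate because $\phi$ vanishes on $\partial F$) yields
\[
H(\psi)-H(h)=\sum_{\langle i,j\rangle\subset\overline F}(h_i-h_j)(\phi_i-\phi_j)+H(\phi)=-\sum_{i\in F}\phi_i\,\Delta h(i)+H(\phi).
\]
Since $\Delta h=0$ on $F\setminus\partial D_N$ and $\Delta h\leq0$ on $\partial D_N$ with $\phi\geq0$ there, the first term is $\geq0$, and $H(\phi)\geq0$; hence $H(\psi)\geq H(h)$, so $h$ is a minimiser and, by the uniqueness established above, $\psi_F=h$. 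The main obstacle is the first step: one must choose a superharmonic majorant that equals $s$ on $\partial_L D_N$ and $t$ on $\partial_R D_N$, and this is exactly where the hypotheses $s>t/2$, $t>s/2$ are consumed; they are in fact sharp, since if one of them failed the corresponding inequality constraint would be inactive at the true minimiser and the stated description of $\psi_F$ would break down.
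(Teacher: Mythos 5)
Your proof is correct, but it takes a genuinely different route from the paper. The paper first establishes an auxiliary lemma characterizing the constrained minimizer by the complementarity (variational-inequality) conditions $\Delta\psi_F=0$ off the contact set $I$ and $\Delta\psi_F\le 0$ on $I$, proves uniqueness for that system by summation by parts, and proves a comparison principle for solutions of this obstacle-type problem; it then constructs the same concave piecewise linear profile you call $\psi^{\ast}$, checks that it solves the complementarity system, and uses the comparison principle to force $\psi_F=s$ on $\partial_L D_N$ and $\psi_F=t$ on $\partial_R D_N$. You bypass the variational-inequality machinery entirely: you guess the answer $h$ (the harmonic function with the equality boundary data), prove $\Delta h\le 0$ on $\partial D_N$ by comparing $h$ with $\psi^{\ast}$ through the ordinary maximum principle for sub/superharmonic functions, and then verify minimality of $h$ in one stroke via the quadratic expansion $H(\psi)-H(h)=-\sum_{i\in F}\phi_i\,\Delta h(i)+H(\phi)$ with $\phi=\psi-h\ge 0$ on $\partial D_N$ and $\phi=0$ on $\partial F$, concluding $\psi_F=h$ by strict convexity. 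Both arguments hinge on the same explicit concave majorant and consume the hypotheses $s>t/2$, $t>s/2$ at exactly the same kink computation (this is the paper's Remark on concavity); what your version buys is a shorter, more elementary proof that needs only the classical maximum principle and a direct energy comparison, while the paper's intermediate lemma yields a reusable characterization and comparison principle for the constrained problem that is somewhat more robust if one did not already know the minimizer's explicit form. One small point of care that you handled correctly but is worth keeping explicit: the identity $\Delta h(i)-\Delta\psi^{\ast}(i)=\sum_{j:|i-j|=1}\bigl(h(j)-\psi^{\ast}(j)\bigr)$ at $i\in\partial D_N$ uses $h(i)=\psi^{\ast}(i)$ there, and the inequality then needs $h\le\psi^{\ast}$ at \emph{all} $2d$ neighbours (which your maximum-principle step provides on all of $F\cup\partial F$, since $\partial D_N\subset F$ guarantees every such neighbour lies in $F\cup\partial F$), not only at the torus-direction neighbours you single out.
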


\begin{remark}
\label{rem:concave}The condition $s>t/2,t>s/2$ is needed to ensure that
piecewise linear function on $\left[  -1,2\right]  $ which is $s$ at $0$,$\ t$
at $1$, and $0$ at $\left\{  -1,2\right\}  $ is concave. We will later apply
the lemma with $s=aN+o\left(  N\right)  ,\ t=bN+o\left(  N\right)  $, so that
we should have $a>b/2$, $b>a/2$ (and $N$ large). If this is not satisfied, we
can take instead of $D_{N,\mathrm{\mathrm{ext}}}$ the smaller extensions
$\left\{  -cN,-cN+1,\ldots,N+cN\right\}  \times\mathbb{T}_{N}^{d-1}$ with $c$
satisfying%
\[
\frac{bc}{1+c}<a,\ \frac{ac}{1+c}<b,
\]
in which case the corresponding piecewise linear function on $\left[
-c,1+c\right]  $ is concave. After this modification, all the arguments below
go through. For the sake of notational simplicity, we stay with our choice for
$D_{N,\mathrm{\mathrm{ext}}}$ and the conditions on $s,t$.
\end{remark}

To prove this lemma, we prepare another lemma, which reduces the 
variational problem to that on
superharmonic functions and gives a comparison for such functions.

\begin{lemma} \label{lem:maxmimum-p}
{\rm (1)} The minimizer $\psi_{F}$ of $H(\psi)$ subject to the conditions
\begin{equation}
\psi_{F}=0\ \mathrm{at\ }\partial F,\quad\psi_{F}\geq s\ \mathrm{on\ }%
\partial_{L}D_{N},\quad\psi_{F}\geq t\ \mathrm{on\ }\partial_{R}D_{N},
\label{eq:maximumprinciple-3}%
\end{equation}
is characterized as the unique solution satisfying this condition and
\begin{equation}
\left\{
\begin{array}
[c]{cc}%
\Delta\psi_{F}=0 & \mathrm{on\ }F\cup\left(  \partial D_{N}\backslash I\right)
\\
\Delta\psi_{F}\leq0 & \mathrm{on\ }I
\end{array}
\right.  , \label{eq:maximumprinciple-5}%
\end{equation}
where $I=I(\psi_{F})$ is a region in $\partial D_{N}$ given by $I\equiv
I_{L}\cup I_{R}:=\{i\in\partial_{L}D_{N};\psi_{F}(i)=s\}\cup\{i\in\partial
_{R}D_{N};\psi_{F}(i)=t\}$.  \\
{\rm (2)} Assume that $\psi^{(1)}$ and $\psi^{(2)}$ are two solutions of the
problem \eqref{eq:maximumprinciple-5} satisfying $\psi^{(1)}\geq\psi^{(2)}$ on
$F^{c}$ instead of $\psi^{(1)}=\psi^{(2)}=0$ on $F^{c}$ in
\eqref{eq:maximumprinciple-3}. Then, we have that $\psi^{(1)}\geq\psi^{(2)}$
on $F$.
\end{lemma}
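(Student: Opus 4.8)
The plan is to read (1) as the Karush--Kuhn--Tucker characterization of the minimizer of a strictly convex quadratic functional over a closed convex polyhedron, and (2) as a discrete maximum principle for superharmonic functions. For (1), observe first that after imposing $\psi=0$ on $\partial F$ the Hamiltonian $H$ is a positive definite quadratic form in the remaining variables $(\psi_i)_{i\in F}$: the Dirichlet form $\sum_{\langle i,j\rangle}(\psi_i-\psi_j)^2$ is positive definite on functions vanishing on $\partial F$ because every connected component of $F$ is adjacent to $\partial F$ (here we use $F\supset\check D_{N,\mathrm{ext}}$, hence $\partial D_{N,\mathrm{ext}}\subset\partial F$, together with the connectedness of the cylinder $D_{N,\mathrm{ext}}$). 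The admissible set $\mathcal{K}:=\{\psi:\psi=0\text{ on }\partial F,\ \psi\ge s\text{ on }\partial_L D_N,\ \psi\ge t\text{ on }\partial_R D_N\}$ is nonempty, closed and convex, so the minimizer $\psi_F$ of $H$ over $\mathcal{K}$ exists and is unique. By the standard first-order condition for minimization over a convex set, $\psi_F\in\mathcal{K}$ is the minimizer if and only if $\sum_{i\in F}\big(-\Delta\psi_F(i)\big)\big(\psi_i-\psi_F(i)\big)\ge 0$ for every $\psi\in\mathcal{K}$, where we use $\partial H/\partial\psi_i=-\Delta\psi_F(i)$ for $i\in F$ (recall $\psi_F=0$ on $\partial F$).

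I would then show this variational inequality is equivalent to \eqref{eq:maximumprinciple-5}. Testing it with $\psi_F\pm\varepsilon e_i$ for an index $i$ at which no constraint is active --- i.e.\ $i\in F\setminus\partial D_N$, or $i\in\partial D_N$ with $i\notin I$ --- is legitimate for small $\varepsilon>0$ and forces $\Delta\psi_F(i)=0$; testing with $\psi_F+\varepsilon e_i$ for $i\in I$, where only upward perturbations stay in $\mathcal{K}$, forces $-\Delta\psi_F(i)\ge 0$. Conversely, if $\psi_F\in\mathcal{K}$ satisfies \eqref{eq:maximumprinciple-5}, then in the displayed sum the terms with $i\notin I$ vanish, while for $i\in I_L$ one has $-\Delta\psi_F(i)\ge 0$ and $\psi_i-\psi_F(i)=\psi_i-s\ge 0$ (and symmetrically on $I_R$), so the sum is $\ge 0$; hence such a $\psi_F$ is the minimizer. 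Since the minimizer is unique, \eqref{eq:maximumprinciple-3}--\eqref{eq:maximumprinciple-5} determines $\psi_F$ uniquely, which proves (1).

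For (2), set $w:=\psi^{(2)}-\psi^{(1)}$ on $F\cup\partial F$ and suppose $m:=\max_{F\cup\partial F}w>0$. Since $w\le 0$ on $F^c$ by hypothesis, the maximum is attained at some $i_0\in F$, and $i_0\notin I(\psi^{(2)})$ because on $I(\psi^{(2)})_L$ one has $\psi^{(2)}=s\le\psi^{(1)}$ (the latter satisfies $\psi^{(1)}\ge s$ on $\partial_L D_N$), so $w\le 0$ there, and likewise on $I(\psi^{(2)})_R$. Therefore $\Delta\psi^{(2)}(i_0)=0$ and, by \eqref{eq:maximumprinciple-5} applied to $\psi^{(1)}$, $\Delta\psi^{(1)}(i_0)\le 0$, whence $\Delta w(i_0)\ge 0$; but $\Delta w(i_0)=\sum_{j:|i_0-j|=1}(w(j)-m)\le 0$, so $\Delta w(i_0)=0$ and $w=m$ at every neighbour of $i_0$. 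Iterating, $C:=\{i\in F\cup\partial F:w(i)=m\}$ is nonempty, contained in $F$ (no point of $\partial F$ nor of $I(\psi^{(2)})$ can carry the value $m>0$), and closed under taking neighbours in $F\cup\partial F$, hence a union of connected components of $F\cup\partial F$; but every such component meeting $F$ also meets $\partial F$ (points of $F$ have all their neighbours in $F\cup\partial F$, and $D_{N,\mathrm{ext}}$ is connected with $F\subsetneq D_{N,\mathrm{ext}}$), contradicting $C\subset F$. Hence $m\le 0$, i.e.\ $\psi^{(1)}\ge\psi^{(2)}$ on $F$.

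The only delicate --- though not genuinely hard --- points are the bookkeeping of free versus constrained indices and the sign convention $\partial H/\partial\psi_i=-\Delta\psi_F(i)$ near $\partial D_N$ in the KKT step, and making the propagation-of-the-maximum argument in (2) rigorous when $F\cup\partial F$ is disconnected. The one structural input beyond convex analysis and the discrete maximum principle, namely that each connected component of $F$ touches $\partial F$, is supplied by $\check D_{N,\mathrm{ext}}\subset F$.
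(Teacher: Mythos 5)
Your proof is correct, and it runs parallel to the paper's at most points: the derivation of \eqref{eq:maximumprinciple-5} from one-sided perturbations $\psi_F+a\delta_i$ is exactly the paper's computation, and your part (2) is the same propagation-of-extremum argument (the paper works with the minimum of $\psi^{(1)}-\psi^{(2)}$, you with the maximum of $\psi^{(2)}-\psi^{(1)}$), including the same implicit use of the constraints $\psi^{(1)}\ge s$ on $\partial_L D_N$ and $\psi^{(1)}\ge t$ on $\partial_R D_N$ to exclude extremal points lying in $I(\psi^{(2)})$, and the same appeal to the fact that each component of $F\cup\partial F$ meets $\partial F$. Where you genuinely diverge is the uniqueness/characterization step in (1): you verify the sufficiency direction of the variational inequality (complementarity gives $\sum_{i\in I}(-\Delta\psi_F(i))(\psi_i-\psi_F(i))\ge 0$ for every admissible $\psi$), so any solution of \eqref{eq:maximumprinciple-3}--\eqref{eq:maximumprinciple-5} is a minimizer, and uniqueness is then inherited from strict convexity of $H$ on $\{\psi=0\text{ on }\partial F\}$, i.e.\ positive definiteness of the Dirichlet form, which also settles existence via coercivity. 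The paper instead proves uniqueness of solutions of the system directly: for two solutions it checks the pointwise inequality $(\psi^{(1)}-\psi^{(2)})(\Delta\psi^{(1)}-\Delta\psi^{(2)})\ge 0$ by the same case analysis on active sets and then sums by parts to force the Dirichlet energy of the difference to vanish. Your KKT/obstacle-problem packaging makes existence explicit and imports a standard convex-analysis fact; the paper's version is self-contained and avoids invoking that machinery, but the underlying computation and the structural input (zero boundary data on $\partial F$ plus connectivity kill constant differences) are the same in both.
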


\begin{proof}
(1) Let $\psi_{F}$ be the minimizer of $H(\psi)$ subject to the conditions
\eqref{eq:maximumprinciple-3}. Then, $\psi_{F}$ is harmonic on $F\cup(\partial
D_{N}\setminus I)$, since
\begin{align*}
0  &  =\frac{d}{da}H(\psi_{F}+a\delta_{i})\big|_{a=0}=\frac{d}{da}%
\sum_{j:|j-i|=1}(\psi_{F}(j)-\psi_{F}(i)+a)^{2}\big|_{a=0}\\
&  =-2\sum_{j:|j-i|=1}(\psi_{F}(j)-\psi_{F}(i))=-2(\Delta\psi_{F})(i),
\end{align*}
for every $i\in F\cup(\partial D_{N}\setminus I)$, where $\delta_{i}%
\in\mathbb{R}^{D_{N,\mathrm{ext}}}$ is defined by $\delta_{i}(j)=\delta_{ij}$.
For $i\in I$, since
\[
\frac{d}{da}H(\psi_{F}+a\delta_{i})\big|_{a=0+}\geq0,
\]
we have $\Delta\psi_{F}\leq0$. Thus the minimizer $\psi_{F}$ satisfies 
\eqref{eq:maximumprinciple-5}.

To show the uniqueness of the solution $\psi_{F}$ of
\eqref{eq:maximumprinciple-5}, let $\psi^{(1)}$ and $\psi^{(2)}$ be two
solutions of the problem \eqref{eq:maximumprinciple-5}. Then, we have that
\begin{equation}
\left(  \psi^{(1)}(i)-\psi^{(2)}(i)\right)  \left(  \Delta\psi^{(1)}%
(i)-\Delta\psi^{(2)}(i)\right)  \geq0, \label{eq:maximumprinciple-4}%
\end{equation}
for all $i\in F$. In fact, denoting $I^{(k)}=I(\psi_{F}^{(k)}),I_{L}%
^{(k)}=I_{L}(\psi_{F}^{(k)}),I_{R}^{(k)}=I_{R}(\psi_{F}^{(k)})$ for $k=1,2$,
if $i\in F\cup(\partial D_{N}\setminus(I^{(1)}\cup I^{(2)}))$, then
$\Delta\psi^{(1)}(i)=\Delta\psi^{(2)}(i)=0$. If $i\in I_{L}^{(1)}\setminus
I^{(2)}$, then $\psi^{(1)}(i)-\psi^{(2)}(i)=s-\psi^{(2)}(i)<0$ and $\Delta
\psi^{(1)}(i)-\Delta\psi^{(2)}(i)=\Delta\psi^{(1)}(i)\leq0$. The case $i\in
I_{L}^{(2)}\setminus I^{(1)}$ and the cases with $I_{R}^{(1)},I_{R}^{(2)}$ are
similar. If $i\in I^{(1)}\cap I^{(2)}$, then $\psi^{(1)}(i)=\psi^{(2)}(i)$. In
all cases, \eqref{eq:maximumprinciple-4} holds.

From \eqref{eq:maximumprinciple-4}, setting $\psi=\psi^{(1)}-\psi^{(2)}$,
since $\psi(i)=0$ on $\partial F$, we have that
\[
0\leq\sum_{i\in F}\psi(i)\Delta\psi(i)=-\sum_{i,j\in\bar{F}:|i-j|=1}\left(
\psi(i)-\psi(j)\right)  ^{2},
\]
see (2.19) in \cite{Fu05} for this summation by parts formula. This shows
$\psi(i)=\psi(j)$ for all $i,j\in\bar{F}=F\cup\partial F:|i-j|=1$. Since
$\psi(i)=0$ at $\partial F$, this proves $\psi=0$ on $F$, and therefore the uniqueness.

(2) Set $\psi=\psi^{(1)}-\psi^{(2)}$ and assume that $-m=\min_{i\in F}%
\psi(i)<0$. Let $i_{0}\in F$ be the point such that $\psi(i_{0})=-m$. Then,
since $\psi^{(2)}(i_{0})=\psi^{(1)}(i_{0})+m>\psi^{(1)}(i_{0})$, from the
first condition in \eqref{eq:maximumprinciple-5}, we see $\Delta\psi
^{(2)}(i_{0})=0$. Thus, $\Delta\psi(i_{0})=\Delta\psi^{(1)}(i_{0})-\Delta
\psi^{(2)}(i_{0})=\Delta\psi^{(1)}(i_{0})\leq0$. Since we have shown
\[
0\geq\Delta\psi(i_{0})=\sum_{j:|i_{0}-j|=1}(\psi(j)-\psi(i_{0}))
\]
and $\psi(j)-\psi(i_{0})\geq0$, we obtain that $\psi(j)=\psi(i_{0})(=-m) $ for
all $j:|i_{0}-j|=1$. Continuing this procedure, we see that $\psi\equiv-m<0$
on the connected component of $F\cup\partial F$ containing $i_{0}$, but this
contradicts with the boundary condition: $\psi\geq0$ on $F^{c}$.
\end{proof}

\begin{proof}
[Proof of Lemma \ref{lem:maxmimum-a}]The harmonic property of $\psi_{F}$ on
$F$ and the property \eqref{eq:maximumprinciple} are immediate from Lemma
\ref{lem:maxmimum-p}. What are left are to show that $\psi_{F}=s$ on
$\partial_{L}D_{N}$, $\psi_{F}=t$ on $\partial_{R}D_{N}$ and to give the
explicit form of $\psi_{F}$ on $D_{N,\mathrm{ext}}^{\circ}\setminus D_{N}$
stated in the lemma. Indeed, define $\psi^{(1)}$ by
\[
\psi^{(1)}(i)=\left\{
\begin{array}
[c]{cc}%
\left(  \frac{i_{1}}{N}+1\right)  s & \mathrm{on\ }\left\{  -N,\ldots
,0\right\}  \times\mathbb{T}_{N}^{d-1}\\
\frac{N-i_{1}}{N}s+\frac{i_{1}}{N}t & \mathrm{on\ }\left\{  1,\ldots
,N-1\right\}  \times\mathbb{T}_{N}^{d-1}\\
\left(  2-\frac{i_{1}}{N}\right)  t & \mathrm{on\ }\left\{  N,\ldots
,2N\right\}  \times\mathbb{T}_{N}^{d-1}%
\end{array}
\right.  .
\]
Then, by the concavity condition on the segments mentioned in the lemma,
$\psi^{(1)}$ satisfies the condition \eqref{eq:maximumprinciple-5} and
$\psi^{(1)}\geq\psi^{(2)}:=\psi_{F}$ on $F^{c}$. Thus, Lemma
\ref{lem:maxmimum-p}-(2) proves $\psi^{(1)}\geq\psi_{F}$ on $F$. This implies
that $\psi_{F}=s$ on $\partial_{L}D_{N}$, $\psi_{F}=t$ on $\partial_{R}D_{N}$.
Once this is shown, the rest is easy, since $\psi_{F}$ is harmonic on
$D_{N,\mathrm{ext}}^{\circ}\setminus D_{N}$.
\end{proof}

With $F$ still as above, and $\mathbf{x}_{L}\in\mathbb{R}^{\partial_{L}D_{N}%
},\ \mathbf{x}_{R}\in\mathbb{R}^{\partial_{R}D_{N}}$, let $\phi_{F,\mathbf{x}%
_{L},\mathbf{x}_{R}}:F\cap D_{N}^{\circ}\rightarrow\mathbb{R}$ be the harmonic
function with $0$ boundary condition on $\partial F\cap D_{N}^{\circ}$,
$\mathbf{x}_{L}$ on $\partial_{L}D_{N}$, and $\mathbf{x}_{R}$ on $\partial
_{R}D_{N}$.  We set $\Xi\left(  F,\mathbf{x}_{L},\mathbf{x}_{R}\right)  \overset
{\mathrm{def}}{=}H\left(  \phi_{F,\mathbf{x}_{L},\mathbf{x}_{R}}\right)  $.

\begin{lemma}  \label{lemma:6.5}
\label{Le_Approx_Energy} Let $F\in\mathbb{F}$. Then, we have the followings.\\
{\rm (1)} Let $s,t\geq0$. Then%
\begin{align*}
&  \mu_{F,\mathrm{\mathrm{ext}}}\left(  \phi%
\vert
_{\partial_{L}D_{N}}\geq s,\ \phi%
\vert
_{\partial_{R}D_{N}}\geq t\right) \\
&  \leq\exp\left[  -\Xi\left(  F,s,t\right)  -\tfrac{s^{2}}2 N^{d-2}-
\tfrac{t^{2}}2 N^{d-2}\right]  .
\end{align*}
{\rm (2)} Let $\delta>0$ and $\mathbf{x}_{L},\ \mathbf{x}_{R}$ satisfy
$aN-N^{1-\delta}\leq\mathbf{x}_{L}\leq aN,$ $bN-N^{1-\delta}\leq\mathbf{x}%
_{R}\leq bN.$ Then%
\[
\Xi\left(  F,aN,bN\right)  \left(  1-\frac{2N^{-\delta}}{\min\left(
a,b\right)  }\right)  \leq\Xi\left(  F,\mathbf{x}_{L},\mathbf{x}_{R}\right)
\leq\Xi\left(  F,aN,bN\right)  .
\]
\end{lemma}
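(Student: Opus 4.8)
The plan is to treat the two assertions separately, both via the quadratic (Gaussian) structure of the free field $\mu_{F,\mathrm{ext}}$ and the summation-by-parts decomposition of the Hamiltonian that already appeared in the proof of Lemma \ref{lem:1}. For part (1), the key is a conditioning argument: write $\mu_{F,\mathrm{ext}}$ as first sampling the boundary traces $\phi|_{\partial_L D_N}$ and $\phi|_{\partial_R D_N}$ and then the rest of the field given those traces. Conditionally on the traces being some $\mathbf{x}_L, \mathbf{x}_R$, the field on $F \cap D_N^\circ$ (and on $F\setminus D_N$) is Gaussian with mean the harmonic extension $\phi_{F,\mathbf{x}_L,\mathbf{x}_R}$, and the relevant part of the exponent equals $H(\phi_{F,\mathbf{x}_L,\mathbf{x}_R}) = \Xi(F,\mathbf{x}_L,\mathbf{x}_R)$ plus a centered Gaussian integral that cancels against the normalization. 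The marginal law of the traces themselves is Gaussian; the point $\partial_L D_N = \partial_R D_N$ are hyperplanes of $N^{d-1}$ sites, and the relevant marginal variance per site is bounded above by $G(0,0) = \bar c$ — or more to the point, the quadratic cost of raising the left trace to level $\ge s$ is at least $\frac{s^2}{2}N^{d-2}$ because the reduced one-dimensional variational problem of extending a flat layer at height $s$ out to the absorbing boundary at distance $N$ has energy exactly $\frac{s^2}{2N}$ per site (compare \eqref{eq:3.6} and the boundary-term computation in Lemma \ref{lem:1}), times $N^{d-1}$ sites. Combining: $\mu_{F,\mathrm{ext}}(\phi|_{\partial_L D_N}\ge s, \phi|_{\partial_R D_N}\ge t)$ is bounded by $\exp[-\Xi(F,s,t)]$ (the minimal energy of a profile meeting the constraint with equality, using that $\Xi$ is increasing and convex in the boundary data so the infimum over $\mathbf{x}_L\ge s, \mathbf{x}_R\ge t$ is attained at $(s,t)$) times the Gaussian tail factor $\exp[-\frac{s^2}{2}N^{d-2}-\frac{t^2}{2}N^{d-2}]$ coming from the cost of the traces. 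One must check independence (or at least a suitable product bound) of the left and right trace costs; since the two boundary hyperplanes are at distance $N$ and the field in between is just the free field, a Brascamp–Lieb / FKG-type decoupling, or simply the explicit quadratic form, gives the product structure. I expect the careful bookkeeping of which Gaussian factor is which — separating the "mean energy" $\Xi$ from the "fluctuation" Gaussian integral and identifying the trace-raising cost — to be the main obstacle here.

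For part (2), this is a pure convexity/scaling estimate on the Dirichlet form. The map $(\mathbf{x}_L,\mathbf{x}_R)\mapsto \phi_{F,\mathbf{x}_L,\mathbf{x}_R}$ is linear, so $\Xi(F,\mathbf{x}_L,\mathbf{x}_R) = H(\phi_{F,\mathbf{x}_L,\mathbf{x}_R})$ is a positive-definite quadratic form in $(\mathbf{x}_L,\mathbf{x}_R)$. The upper bound $\Xi(F,\mathbf{x}_L,\mathbf{x}_R)\le \Xi(F,aN,bN)$ follows from monotonicity of the Dirichlet energy of the harmonic extension under decreasing the (nonnegative) boundary data — this is the comparison/maximum principle, exactly as used for \eqref{eq:3.7}: if $0\le \mathbf{x}_L\le aN$ and $0\le \mathbf{x}_R\le bN$, then by linearity $\phi_{F,aN,bN}-\phi_{F,\mathbf{x}_L,\mathbf{x}_R}$ is the harmonic extension of nonnegative data, hence nonnegative, and one checks the energies are ordered (e.g. because $H$ restricted to harmonic extensions of ordered nonnegative data is monotone). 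For the lower bound, write $\mathbf{x}_L = aN(1 - u_L)$ with $0\le u_L \le N^{-\delta}/a$ and similarly $\mathbf{x}_R = bN(1-u_R)$ with $0\le u_R\le N^{-\delta}/b$. Then $\phi_{F,\mathbf{x}_L,\mathbf{x}_R} = \phi_{F,aN,bN} - (\text{harmonic extension of } aN u_L \text{ on } \partial_L, \ bN u_R \text{ on } \partial_R)$, and expanding the quadratic form $H(\cdot)$ one gets $\Xi(F,\mathbf{x}_L,\mathbf{x}_R) \ge (1-\varepsilon_N)^2\,\Xi(F,aN,bN)$ where $\varepsilon_N = \max(u_L, u_R)\le N^{-\delta}/\min(a,b)$; dropping the square, $(1-\varepsilon_N)^2 \ge 1 - 2\varepsilon_N \ge 1 - 2N^{-\delta}/\min(a,b)$. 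One small point to handle: the perturbation has $\mathbf{x}_L$-part and $\mathbf{x}_R$-part which may not be simultaneously "aligned" with $\phi_{F,aN,bN}$, but since all the boundary data are nonnegative and the harmonic extensions are sign-definite, the cross terms in the quadratic form have a favorable sign (or are controlled by Cauchy–Schwarz), so the claimed one-parameter estimate goes through. This part is routine once the bookkeeping is set up, so I'd present it compactly.

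In summary: part (1) via a conditioning-on-boundary-traces argument combined with the explicit flat-layer energy $\frac{s^2}{2}N^{d-2}$ and convexity of $\Xi$ in the boundary data; part (2) via linearity of the harmonic extension and monotonicity/scaling of the Dirichlet quadratic form, using the comparison principle (already invoked for \eqref{eq:3.7}) for the upper bound and a first-order expansion for the lower bound. The delicate point throughout is keeping the Gaussian normalization constants straight in part (1) so that only the desired $\exp[-\Xi - \frac{s^2}2 N^{d-2} - \frac{t^2}2 N^{d-2}]$ survives.
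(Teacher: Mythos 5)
There is a genuine gap in both parts, and in each case it sits exactly where the paper has to work hardest. For part (1), your plan hinges on the assertion that the infimum of the energy over the constraint set $\{\mathbf{x}_{L}\geq s,\ \mathbf{x}_{R}\geq t\}$ is attained at the constant data $(s,t)$ ``because $\Xi$ is increasing and convex in the boundary data''. That monotonicity is false in general: if $\partial F\cap D_{N}^{\circ}=\emptyset$ then $\Xi(F,s,t)=\tfrac{1}{2}N^{d-2}(s-t)^{2}$, which decreases when you raise the smaller of the two levels. The relevant functional is in any case not $\Xi$ alone but $\Xi$ plus the energy in the two outer slabs of $D_{N,\mathrm{ext}}$, and identifying its constrained minimizer is precisely the content of Lemmas \ref{lem:maxmimum-p} and \ref{lem:maxmimum-a} (note the condition $s>t/2$, $t>s/2$ and Remark \ref{rem:concave}), which your argument skips. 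Moreover, even once the minimizer $\psi_{F}$ is known, bounding a Gaussian probability by $\exp[-\min_{\mathrm{event}}H]$ is not automatic: the paper shifts $\phi=\bar{\phi}+\psi_{F}$ and uses the first-order optimality condition \eqref{eq:maximumprinciple} ($\Delta\psi_{F}\leq0$ on $\partial D_{N}$) to see that the cross term has a sign, so the residual integral over $\{\bar{\phi}\geq0$ on $\partial D_{N}\}$ is at most $1$. Your phrase ``a centered Gaussian integral that cancels against the normalization'' glosses over exactly this step: after conditioning on the traces the normalization of the trace marginal does not simply cancel, and the FKG/Brascamp--Lieb ``decoupling'' you invoke is both unnecessary (the minimizer's energy splits exactly over the disjoint bond sets of $D_{N}$ and the two slabs, giving the terms $\tfrac{s^{2}}{2}N^{d-2}$ and $\tfrac{t^{2}}{2}N^{d-2}$) and insufficient to replace the sign argument.

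For part (2), the step ``expanding the quadratic form one gets $\Xi(F,\mathbf{x}_{L},\mathbf{x}_{R})\geq(1-\varepsilon_{N})^{2}\Xi(F,aN,bN)$'' implicitly needs the harmonic extension $v$ of the deficiencies to satisfy $H(v)\leq\varepsilon_{N}^{2}H(\phi_{F,aN,bN})$ together with control of the cross term. Pointwise domination of boundary data does not imply domination of Dirichlet energies of harmonic extensions (in one dimension, data $(1,1)$ dominates $(1,0)$ but has zero energy), and when $a=b$ and $\partial F\cap D_{N}^{\circ}=\emptyset$ one even has $H(\phi_{F,aN,bN})=0$ while $H(v)>0$, so neither the relative smallness of $v$ nor the claimed ``favorable sign'' of the cross term can be taken for granted. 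The paper's route is different and avoids this: it reduces to the corner case of constant data $aN-N^{1-\delta}$, $bN-N^{1-\delta}$ and observes that this harmonic function is an exact scalar multiple, $(1-N^{-\delta}/a)\,\psi'$, of a harmonic function $\psi'$ whose boundary data dominate $(aN,bN)$; the energy then scales exactly by $(1-N^{-\delta}/a)^{2}\geq1-2N^{-\delta}/\min(a,b)$. Your justification of the upper bound (``ordered nonnegative data implies ordered energies'') is not a valid principle for the same reason, so as written neither inequality of part (2) is established by your argument.
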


\begin{proof}
(1) We consider $\psi_{F}$ as in the previous lemmas. With the transformation
of variables $\phi_{i}=\bar{\phi}_{i}+\psi_{F}\left(  i\right)  $, we obtain%
\begin{align*}
&  \mu_{F,\mathrm{ext}}\left(  \phi%
\vert
_{\partial_{L}D_{N}}\geq s,\phi%
\vert
_{\partial_{R}D_{N}}\geq t\right) \\
&  =\left[  -\Xi\left(  F,s,t\right)  -\tfrac{s^{2}}2 N^{d-2}-\tfrac{t^{2}}2 N^{d-2}\right] \\
&  \times\int_{\bar{\phi}_{i}\geq0,\ i\in\partial D_{N}}\exp\left[
-2 \sum\nolimits_{i\in\partial D_{N}}\bar{\phi}_{i}\sum\nolimits_{j}\left(
\psi_{F}\left(  j\right)  -\psi_{F}\left(  i\right)  \right)  \right]
\mu_{F,\mathrm{ext}}\left(  d\bar{\phi}\right)  .
\end{align*}
By Lemma \ref{lem:maxmimum-a}, the integrand is $\leq1$ in the domain of
integration, which proves the claim.

(2) It evidently suffices to prove%
\[
\Xi\left(  F,aN-N^{1-\delta},bN-N^{1-\delta}\right)  \geq\Xi\left(
F,aN,bN\right)  \left(  1-\frac{2N^{-\delta}}{\min\left(  a,b\right)
}\right)  .
\]
Without loss of generality, we assume $b\geq a$. Then
\[
\frac{bN}{bN-N^{1-\delta}}\leq\frac{aN}{aN-N^{1-\delta}}.
\]
Let $\psi$ be the harmonic function on $F$ which is $0$ on $\partial F\cap
D_{N}^{\circ}$, $aN-N^{1-\delta}$ on $\partial_{L}D_{N}$ and $bN-N^{1-\delta}$ on
$\partial_{R}D_{N}$. Define%
\[
\psi^{\prime}\overset{\mathrm{def}}{=}\frac{aN}{aN-N^{1-\delta}}\psi
\]
which is harmonic on $F$, $0$ on $\partial F\cap D_{N}^{\circ}$, $aN$ on
$\partial_{L}D_{N}$ and $\geq bN$ on $\partial_{R}D_{N}$. If we define
$\psi^{\prime\prime}$ to be the harmonic function on $F$ which has boundary
conditions $aN,\ bN$ on $\partial_{L}D_{N},\ \partial_{R}D_{N}$, respectively,
and $0$ on $\partial F\cap D_{N}^{\circ}$, we get%
\begin{align*}
H\left(  \psi\right)   &  =\left(  1-\frac{N^{-\delta}}{a}\right)
^{2}H\left(  \psi^{\prime}\right)  \geq\left(  1-\frac{N^{-\delta}}{a}\right)
^{2}H\left(  \psi^{\prime\prime}\right) \\
&  =\left(  1-\frac{N^{-\delta}}{a}\right)  ^{2}\Xi\left(  F,aN,bN\right)
\geq\left(  1-\frac{2N^{-\delta}}{a}\right)  \Xi\left(  F,aN,bN\right)  .
\end{align*}

\end{proof}

\subsection{Superexponential estimate}

Given $0<\beta<1$, we consider the following coarse graining: We divide $D_{N} $
into $N^{d\left(  1-\beta\right)  }$ subboxes of sidelength $N^{\beta}.$ For
the sake of simplicity, we assume that $N^{\beta}$ divides $N$ as before.
We write $\mathcal{B}_{N}\equiv \mathcal{B}_{N,\beta}$ for the set of these 
subboxes, and $\mathcal{\hat{B}}_{N} \equiv \mathcal{\hat{B}}_{N,\beta}$
for the set of unions of boxes in $\mathcal{B}_{N}.$ We attach to every subbox
$C\in\mathcal{B}_{N}$ the arithmetic mean%
\[
\phi_{C}^{\mathrm{cg,}\beta,N}\overset{\mathrm{def}}{=}N^{-d\beta}\sum_{j\in
C}\phi_{j}.
\]
Then define%
\begin{align*}
\phi^{\mathrm{cg},\beta,N}\left(  i\right)   &  =\phi_{C}^{\mathrm{cg}%
,\beta,N},\ i\in C,\\
h^{\mathrm{cg},\beta,N}\left(  x\right)   &  =\frac{1}{N}\phi^{\mathrm{cg},\beta,N}\left(
\left[  xN\right]  \right)  ,\ x\in D = \left[  0,1\right]  \times\mathbb{T}^{d-1}.
\end{align*}

\begin{proposition} \label{Prop_Coarse_graining}
For every $\eta>0$ satisfying $2\eta+\beta<1$ and for large enough
$N$ (as stated at the beginning of Section \ref{section:6.1}), 
\[
\mu_{N}^{aN,bN,\varepsilon}\left(  \left\Vert h^{\mathrm{cg},\beta,N}%
-h^{N}\right\Vert _{L^1(D)}\geq N^{-\eta}\right)  \leq C \exp\left[  -\frac{1}%
{C}N^{d+1-2\eta-\beta}\right].
\]

\end{proposition}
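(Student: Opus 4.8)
The plan is to reduce the event to a large–deviation bound for the Dirichlet energy $H_{N}(\phi)$, and then to control that bound by an exponential Chebyshev argument fed by the partition–function estimates of Section~2. Throughout, $c_{d},c_{d}',C_{1},C_{2},\dots$ denote positive constants that may depend on $d$ (and later on $a,b,\varepsilon$) but not on $N$.

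\emph{Step 1 (deterministic reduction).} I would first prove a pointwise inequality of the form
\[
\|h^{\mathrm{cg},\beta,N}-h^{N}\|_{L^{1}(D)}\le c_{d}\,N^{\beta-1-d/2}\,H_{N}(\phi)^{1/2}.
\]
Up to the box–versus–interpolation conventions (cf.\ Lemma~\ref{lem:Ma-H-1}), $\|h^{\mathrm{cg},\beta,N}-h^{N}\|_{L^{1}(D)}=N^{-d-1}\sum_{C\in\mathcal{B}_{N}}\sum_{i\in C}|\phi_{C}^{\mathrm{cg},\beta,N}-\phi_{i}|$, and since $\phi_{C}^{\mathrm{cg},\beta,N}-\phi_{i}=N^{-d\beta}\sum_{j\in C}(\phi_{j}-\phi_{i})$, the inner sum is $\le N^{-d\beta}\sum_{i,j\in C}|\phi_{i}-\phi_{j}|$. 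Joining $i$ and $j$ inside the box $C$ by the canonical lattice path that corrects the coordinates one after the other (of length $\le dN^{\beta}$), one checks that every nearest–neighbour edge of $C$ is traversed by at most $N^{\beta(d+1)}$ of the pairs $(i,j)\in C\times C$, whence $\sum_{i,j\in C}|\phi_{i}-\phi_{j}|\le N^{\beta(d+1)}\sum_{e\subset C}|\nabla\phi_{e}|$. Summing over the disjoint boxes $C\in\mathcal{B}_{N}$ gives $\|h^{\mathrm{cg},\beta,N}-h^{N}\|_{L^{1}(D)}\le N^{\beta-d-1}\sum_{e\subset D_{N}}|\nabla\phi_{e}|$, and the Cauchy--Schwarz inequality together with $\#\{\text{edges of }D_{N}\}\le c_{d}N^{d}$ and $\sum_{e}|\nabla\phi_{e}|^{2}=2H_{N}(\phi)$ finishes Step~1. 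In particular, the event in the proposition is contained in $\{H_{N}(\phi)\ge T\}$ with $T:=c_{d}'N^{d+2-2\eta-2\beta}$, and the hypothesis $2\eta+\beta<1$ ensures $T/N^{d}\to\infty$.

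\emph{Step 2 (energy deviation via the pinning expansion).} By \eqref{Main_Expansion}, $\mu_{N}^{aN,bN,\varepsilon}(H_{N}(\phi)\ge T)=\sum_{A}p_{N}^{\varepsilon}(A)\,\mu_{A}^{aN,bN}(H_{N}(\phi)\ge T)$. Writing $\phi=m_{A}+\psi$ under $\mu_{A}^{aN,bN}$, where $m_{A}$ is the harmonic extension on $A$ with the prescribed boundary data ($0$ off $A$, $aN$ on $\partial_{L}D_{N}$, $bN$ on $\partial_{R}D_{N}$) and $\psi\sim\mu_{A}^{0}$ is centered, the cross term in $H_{N}(m_{A}+\psi)$ vanishes by summation by parts (exactly as in the proof of Lemma~\ref{lem:1}, using that $m_{A}$ is harmonic on $A$ and $\psi\equiv0$ on $D_{N}\setminus A$), so that $H_{N}(\phi)=H_{N}(m_{A})+H_{A}^{0}(\psi)$; moreover a direct Gaussian computation gives $E^{\mu_{A}^{0}}[e^{\frac12 H_{A}^{0}(\psi)}]=2^{|A|/2}$ (it is the moment generating function of $\tfrac12\chi^{2}_{|A|}$, since $2H_{A}^{0}(\psi)=\langle\psi,\Gamma_{A}^{-1}\psi\rangle$ for $\psi\sim N(0,\Gamma_{A})$). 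Exponential Chebyshev therefore yields $\mu_{A}^{aN,bN}(H_{N}(\phi)\ge T)\le e^{-T/2}\,e^{\frac12 H_{N}(m_{A})}\,2^{|A|/2}$. Recalling $Z_{A}^{aN,bN}=e^{-H_{N}(m_{A})}Z_{A}^{0}$ and $\sum_{A\subset D_{N}^{\circ}}\varepsilon^{|A^{c}|}Z_{A}^{0}=Z_{D_{N}^{\circ}}^{0,\varepsilon}$, and bounding $e^{-\frac12 H_{N}(m_{A})}\le 1$ and $2^{|A|/2}\le 2^{N^{d}/2}$, we obtain
\[
\mu_{N}^{aN,bN,\varepsilon}(H_{N}(\phi)\ge T)\le e^{-T/2}\,2^{N^{d}/2}\,\frac{Z_{D_{N}^{\circ}}^{0,\varepsilon}}{Z_{N}^{aN,bN,\varepsilon}}.
\]

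\emph{Step 3 (partition–function ratio and conclusion).} Lemma~\ref{lem:4} applied to the periodically connected rectangle $D_{N}^{\circ}$, together with $c_{N}\le C$ from Lemma~\ref{lem:3}, gives $\log Z_{D_{N}^{\circ}}^{0,\varepsilon}\le C_{1}N^{d}$, while $\log Z_{N}^{aN,bN,\varepsilon}\ge\log Z_{N}^{aN,bN}=-\tfrac{N^{d}}{2}(a-b)^{2}+\log Z_{N}^{0,0}\ge-C_{2}N^{d}$ by Lemmas~\ref{lem:1} and \ref{lem:2}. Hence the ratio above is $\le e^{C_{3}N^{d}}$, so $\mu_{N}^{aN,bN,\varepsilon}(H_{N}(\phi)\ge T)\le e^{-T/2+C_{4}N^{d}}$. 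Since $T/N^{d}\to\infty$, for $N$ large this is $\le e^{-T/4}$, and because $N^{d+2-2\eta-2\beta}=N^{1-\beta}\cdot N^{d+1-2\eta-\beta}\ge N^{d+1-2\eta-\beta}$ (as $\beta<1$), it is $\le C\exp[-\tfrac1C N^{d+1-2\eta-\beta}]$ for a suitable $C$, as required. The main obstacle is Step~2: there is no uniform a priori bound on $H_{N}(m_{A})$ (for ``bad'' sets $A$, e.g.\ two thin slabs along the boundaries, it is of order $N^{d+1}$), but the weight $e^{-H_{N}(m_{A})}$ hidden in $Z_{A}^{aN,bN}$ compensates it exactly, which is what makes the whole deviation collapse onto the single ratio $Z_{D_{N}^{\circ}}^{0,\varepsilon}/Z_{N}^{aN,bN,\varepsilon}$ controlled by Section~2; the routing argument in Step~1 also has to be set up carefully to produce the uniform edge–multiplicity bound.
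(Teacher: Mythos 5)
Your proposal is correct, but it takes a genuinely different route from the paper. The paper bounds $\sum_i|\phi^{\mathrm{cg},\beta,N}(i)-\phi_i|$ by a supremum of linear functionals $X(\mathbf{\sigma})$ over sign patterns $\mathbf{\sigma}\in\{-1,1\}^{D_{N,\mathrm{ext}}^{\circ}}$, pays a union bound $2^{O(N^d)}$, and estimates $\operatorname{var}_{\mu_A}(X(\mathbf{\sigma}))\le CN^{1+d+\beta}$ uniformly in the pinned set $A$ via the Green's-function gradient bound of Lemma \ref{lem:2.5-b}; the nonzero boundary conditions are then handled by passing to $D_{N,\mathrm{ext}}$ and conditioning on $\Lambda_{N,a,b}$, at a cost $e^{CN^d}$ (inequality \eqref{Bound_F}). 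You instead reduce the coarse-graining error deterministically to the total Dirichlet energy, $\|h^{\mathrm{cg},\beta,N}-h^N\|_{L^1}\le c_dN^{\beta-1-d/2}H_N(\phi)^{1/2}$ (the canonical-path congestion bound $N^{\beta(d+1)}$ per edge and Cauchy--Schwarz are both correct, and the box-versus-$[xN]$ convention only adds nearest-neighbour gradient terms of the same type), and then control the upper tail of $H_N$ under each $\mu_A^{aN,bN}$ by the exact chi-square moment generating function at parameter $1/2$. The key cancellation $Z_A^{aN,bN}\,e^{\frac12H_N(m_A)}=Z_A^0\,e^{-\frac12H_N(m_A)}\le Z_A^0$ is legitimate provided $Z_A^{aN,bN}$ is read as the weight actually appearing in the expansion \eqref{Main_Expansion} (so that $H_N(m_A)$ includes the constant bonds between $\partial D_N$ and the pinned sites), which is how you use it; this collapses everything onto the crude ratio $Z^{0,\varepsilon}_{D_N^{\circ}}/Z_N^{aN,bN,\varepsilon}\le e^{CN^d}$ from Lemmas \ref{lem:1}, \ref{lem:2} and \ref{lem:4}, so no Green's-function gradient estimate and no extension/conditioning trick are needed, and since the threshold $T\sim N^{d+2-2\eta-2\beta}$ dominates $N^d$ (because $\eta+\beta<1$) you even obtain the slightly stronger exponent $N^{d+2-2\eta-2\beta}\ge N^{d+1-2\eta-\beta}$ (as $\beta<1$). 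What the paper's heavier route buys is machinery that is reused elsewhere: the gradient estimate and the bound \eqref{Bound_F} serve again in Lemma \ref{lem:Ma-H-1} and in the proof of \eqref{LDP_main2}, whereas your energy reduction is specific to this proposition; the only loose ends in your write-up are bookkeeping ones already present in the paper (the divisibility of $N$ by $N^{\beta}$, and the parity assumption in Lemma \ref{lem:2} used for the lower bound on $\log Z_N^{0,0}$).
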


\begin{proof}
We first consider the $\mu_{N,\mathrm{ext}}^{\varepsilon}$ which is defined as
the free field with $0$ boundary conditions (and no boundary conditions on
$\partial D_{N}$). We use the extension as explained in Section
\ref{section:6.1}. Expanding the product in the usual way, we get%
\begin{equation}
\mu_{N,\mathrm{ext}}^{\varepsilon}=\sum_{A\in\mathbb{F}}\frac{Z_{A}%
}{Z_{N,\mathrm{ext}}^{\varepsilon}}\varepsilon^{\left\vert A^{c}\right\vert
}\mu_{A}, \label{Representation}%
\end{equation}
where $A^{c}\overset{\mathrm{def}}{=}D_{N,\mathrm{\mathrm{ext}}}^{\circ}\backslash
A$, and $\mu_{A}$ is the centered Gaussian field on $D_{N,\mathrm{ext}}%
^{\circ}$ with zero boundary conditions outside on $\partial A$. The
covariance function of $\mu_{A}$ is denoted by $G_{A}.$ It is convenient to
extend $G_{A}\left(  i,j\right)  $ to $i$ or $j\notin A$ by putting it $0$. It
is the Green's function for a random walk on $A$ with Dirichlet boundary condition.

We can define $h^{N},h^{\mathrm{cg},\beta,N}$ in the same way as before, but on
the extended space. The coarse graining is done here on the full
$D_{N,\mathrm{ext}}.$ We first prove that%
\begin{equation}
\mu_{N,\mathrm{ext}}^{\varepsilon}\left(  \left\Vert h^{N}-h^{\mathrm{cg}%
,\beta,N}\right\Vert _{L^1(D)}\geq N^{-\eta}\right)  \leq C\exp\left[  -\frac{1}%
{C}N^{d+1-2\eta-\beta}\right]  \label{CG_zero}%
\end{equation}
provided $2\eta+\beta<1$.

Using the expansion (\ref{Representation}), it suffices to prove the
inequality for $\mu_{A}$, uniformly in $A$. So we have to estimate%
\[
\mu_{A}\left(  \sum\nolimits_{i\in D_{N}^{\circ}}\left\vert N^{-d\beta}%
\sum\nolimits_{j\in C_{i}}\left(  \phi_{j}-\phi_{i}\right)  \right\vert \geq
N^{1+d-\eta}\right)
\]
where $C_{i}\in\mathcal{B}_{N,\beta,\mathrm{ext}}$ denotes the box in which
$i$ lies. The sum over the extended region $D_{N,\mathrm{ext}}^{\circ}$ 
of the absolute values is%
\[
\sup_{\mathbf{\sigma}}\sum_{i\in D_{N,\mathrm{ext}}^{\circ}}\sigma_{i}\left(
N^{-d\beta}\sum\nolimits_{j\in C_{i}}\left(  \phi_{j}-\phi_{i}\right)
\right)  ,
\]
where $\mathbf{\sigma}=\left(  \sigma_{i}\right)  \in\left\{  -1,1\right\}
^{D_{N,\mathrm{ext}}^{\circ}}.$ Therefore, with%
\[
X\left(  \mathbf{\sigma}\right)  \overset{\mathrm{def}}{=}\sum\nolimits_{i\in
D_{N,\mathrm{ext}}^{\circ}}\sigma_{i}\left(  N^{-d\beta}\sum\nolimits_{j\in
C_{i}}\left(  \phi_{j}-\phi_{i}\right)  \right),
\]%
we have
\begin{align*}
&  \mu_{A}\left(  \sum\nolimits_{i\in D_{N,\mathrm{ext}}^{\circ}}\left\vert
N^{-d\beta}\sum\nolimits_{j\in C_{i}}\phi_{j}-\phi_{i}\right\vert \geq
N^{1+d-\eta}\right) \\
&  \leq2^{\left\vert D_{N,\mathrm{ext}}^{\circ}\right\vert }\sup
_{\mathbf{\sigma}}\mu_{A}\left(  X\left(  \mathbf{\sigma}\right)  \geq
N^{1+d-\eta}\right),
\end{align*}
where $\mu_A=\mu_{A, \mathrm{ext}}$.
The $X\left(  \mathbf{\sigma}\right)  $ are centered Gaussian variables, so we
just have to estimate the variances, uniformly in $\mathbf{\sigma}$ and $A.$%
\begin{align*}
\operatorname*{var}\nolimits_{\mu_{A}}\left(  X\left(  \mathbf{\sigma}\right)
\right)   &  \leq\sum_{i,k\in D_{N,\mathrm{ext}}^{\circ}}\left\vert E_{\mu
_{A}}\left(  N^{-2d\beta}\sum\nolimits_{j^{\prime}\in C_{i}}\left(
\phi_{j^{\prime}}-\phi_{i}\right)  \sum\nolimits_{j\in C_{k}}\left(  \phi
_{j}-\phi_{k}\right)  \right)  \right\vert \\
&  \leq2\sum_{i,k\in D_{N,\mathrm{ext}}^{\circ}}\left\vert E_{\mu_{A}}\left(
N^{-d\beta}\phi_{i}\sum\nolimits_{j\in C_{k}}\left(  \phi_{j}-\phi_{k}\right)
\right)  \right\vert \\
&  \leq2\sum_{i\in D_{N,\mathrm{ext}}^{\circ}}N^{-d\beta}\sum_{k\in
D_{N,\mathrm{ext}}^{\circ}}\sum_{j\in C_{k}}\left\vert G_{A}\left(
i,j\right)  -G_{A}\left(  i,k\right)  \right\vert \\
&  \leq2\sum_{i\in D_{N,\mathrm{ext}}^{\circ}}N^{-d\beta}\sum_{k\in
D_{N,\mathrm{ext}}^{\circ}}\sum_{j:d\left(  j,k\right)  \leq\rho\left(
d,\beta\right)  }\left\vert G_{A}\left(  i,j\right)  -G_{A}\left(  i,k\right)
\right\vert ,
\end{align*}
where $G_{A}$ is the Green's function of ordinary random walk with killing at
exiting $A$ or reaching $\partial D_{N,\mathrm{ext}}$. $d\left(  j,k\right)  $
is any reasonable distance on the discrete torus, for instance the length of
the shortest path from $j$ to $k$. $\rho\left(  d,\beta\right)  $ is the
diameter of the boxes in $\mathcal{B}_{N,\beta}$. If we define $K\left(
d,\beta\right)  $ to be the ball of radius $\rho\left(  d,\beta\right)  $
around $0\in D_{N,\mathrm{ext}}$, we can also write the above expression as%
\[
2\sum_{j\in K}N^{-d\beta}\sum_{i\in D_{N,\mathrm{ext}}^{\circ}}\sum_{k\in
D_{N,\mathrm{ext}}^{\circ}}\left\vert G_{A}\left(  i,k+j\right)  -G_{A}\left(
i,k\right)  \right\vert .
\]
For $i\in A$, let $\pi_{A}\left(  i,\cdot\right)  $ be the first exit
distribution from $A$ of a random walk starting in $i$. It is well known that%
\[
G_{A}\left(  i,k\right)  =G_{N,\mathrm{ext}}\left(  i,k\right)  -\sum_{s}%
\pi_{A}\left(  i,s\right)  G_{N,\mathrm{ext}}\left(  s,k\right)
\]
where $G_{N,\mathrm{ext}}$ is the the Green's function on $D_{N,\mathrm{ext}}$
with Dirichlet boundary condition on $\partial D_{N,\mathrm{ext}}.$ Therefore%
\begin{align*}
\left\vert G_{A}\left(  i,k+j\right)  -G_{A}\left(  i,k\right)  \right\vert
&  \leq\left\vert G_{N,\mathrm{ext}}\left(  i,k+j\right)  -G_{N,\mathrm{ext}%
}\left(  i,k\right)  \right\vert \\
&  +\sum_{s}\pi_{A}\left(  i,s\right)  \left\vert G_{N,\mathrm{ext}}\left(
s,k+j\right)  -G_{N,\mathrm{ext}}\left(  s,k\right)  \right\vert .
\end{align*}
Let%
\[
\mu\left(  j\right)  \overset{\mathrm{def}}{=}\sup_{i\in A}\sum_{k\in
D_{N,\mathrm{ext}}}\left\vert G_{N,\mathrm{ext}}\left(  i,k+j\right)
-G_{N,\mathrm{ext}}\left(  i,k\right)  \right\vert .
\]
Then we obtain%
\begin{align*}
\sum_{i\in D_{N,\mathrm{ext}}^{\circ}}\sum_{k\in D_{N,\mathrm{ext}}^{\circ}%
}\left\vert G_{A}\left(  i,k+j\right)  -G_{A}\left(  i,k\right)  \right\vert
&  \leq\mu\left(  j\right)  \left\vert A\right\vert +\mu\left(  j\right)
\sum_{i\in A}\sum_{s}\pi_{A}\left(  i,s\right) \\
&  =2\mu\left(  j\right)  \left\vert A\right\vert .
\end{align*}
We prove further down that%
\begin{equation}
\mu\left(  j\right)  \leq Cd\left(  j,0\right)  N \label{Green_derivative}%
\end{equation}
From that, we obtain%
\[
\operatorname*{var}\nolimits_{\mu_{A}}\left(  X\left(  \mathbf{\sigma}\right)
\right)  \leq CN^{1+\beta}\left\vert A\right\vert \leq CN^{1+d+\beta},
\]
and therefore%
\begin{align*}
&  \mu_{A}\left(  \sum\nolimits_{i\in D_{N,\mathrm{ext}}^{\circ}}\left\vert
N^{-d\beta}\sum\nolimits_{j\in C_{i}}\phi_{j}-\phi_{i}\right\vert \geq
N^{1+d-\eta}\right) \\
&  \leq2^{3N^{d}}\exp\left[  -N^{2+2d-2\eta}N^{-1-d-\beta}\right]  \leq
\exp\left[  -\frac{1}{C}N^{1+d-2\eta-\beta}\right]
\end{align*}
provided $2\eta+\beta<1$, and $N$ is large enough. This proves (\ref{CG_zero}%
), but we still have to prove (\ref{Green_derivative}).

For a fixed $j\in K\left(  d,\beta\right)  $ we can find a nearest neighbor
path of length $d\left(  j,0\right)  $ connecting $0$ with $j$. In order to
prove (\ref{Green_derivative}), we therefore only have to prove that for any
$e$ with $\left\vert e\right\vert =1$, we have%
\[
\sum_{k}\left\vert G_{N}\left(  0,k\right)  -G_{N}\left(  0,k+e\right)
\right\vert =O\left(  N\right)  .
\]
This was shown in Lemma \ref{lem:2.5-b}.

Next, we discuss how to transfer the result to the one we are interested in,
namely the corresponding approximation result on $D_{N}$ with boundary
conditions $aN$ and $bN$, respectively. For $a,b>0$ consider the event%
\begin{align}
\Lambda_{N,a,b}\overset{\mathrm{def}}{=}\Big \{\phi &  :\phi_{i}\in\left[
aN,aN+N^{-2d}\right]  ,\ i\in\partial_{L}D_{N},\label{Def_Lambdaab}\\
\phi_{i}  &  \in\left[  bN,bN+N^{-2d}\right]  ,\ i\in\partial_{R}%
D_{N}\Big \}.\nonumber
\end{align}
Applying Lemma \ref{Le_Approx_Energy} with $F=D_{N,\mathrm{ext}}^{\circ
},\ s=aN,\ t=bN$, we get%
\begin{equation}
\mu_{N,\mathrm{ext}}\left(  \Lambda_{N,a,b}\right)  =\exp\left[  -N^{d}%
\frac{a^{2}+\left(  b-a\right)  ^{2}+b^{2}}{2}+O\left(  N^{d-1}\right)
\right]  \mu_{N,\mathrm{ext}}\left(  \Lambda_{N,0,0}\right)  .
\label{MuNext_Lambdaab}%
\end{equation}
Furthermore%
\begin{equation}
\mu_{N,\mathrm{ext}}\left(  \Lambda_{N,0,0}\right)  \geq\left(  CN^{-2d}%
\right)  ^{2N^{d-1}}. \label{LB_Lambda00}%
\end{equation}
To prove this, we enumerate the points in $\partial D_{N}$ as $k_{1}%
,\ldots,k_{2N^{d-1}}$, and prove%
\begin{equation}
\mu_{N,\mathrm{ext}}\left(  \phi_{k_{1}}\in\left[  0,N^{-2d}\right]  \right)
\geq CN^{-2d}, \label{Gauss1}%
\end{equation}%
\begin{equation}
\mu_{N,\mathrm{ext}}\left(  \left.  \phi_{k_{j+1}}\in\left[  0,N^{-2d}\right]
\right\vert \phi_{k_{i}}=x_{i},\ \forall i\leq j\right)  \geq CN^{-2d},
\label{Gauss2}%
\end{equation}
uniformly in $x_{i}\in\left[  0,N^{-2d}\right]  $, and $j\leq2N^{d-1}.$
(\ref{Gauss1}) follows from the fact that $\phi_{k_{1}}$ is centered under
$\mu_{N,\mathrm{ext}}$ and $\operatorname*{var}\left(  \phi_{k_{1}}\right)  $
is bounded and bounded away from $0,$ uniformly in $N$, as we assume $d\geq3$.
Under the conditional distribution $\mu_{N,\mathrm{ext}}\left(  \left.
\cdot\right\vert \phi_{k_{i}}=x_{i},\ \forall i\leq j\right)  ,$
$\phi_{k_{j+1}}$ is not centered, but has an expectation in $\left[
0,N^{-2d}\right]  .$ Furthermore, the conditional variance is bounded and
bounded away from $0,$ uniformly in $N$, the choice of the enumeration, and
$j$. So (\ref{Gauss2}) follows, too. This implies (\ref{LB_Lambda00}).

From that, we get%
\begin{equation}
\mu_{N,\mathrm{ext}}^{\varepsilon}\left(  \Lambda_{N,a,b}\right)  \geq
\frac{Z_{N}}{Z_{N,\mathrm{ext}}}\mu_{N,\mathrm{ext}}\left(  \Lambda
_{N,a,b}\right)  \geq\exp\left[  -CN^{d}\right]  . \label{LB_Lambdaab}%
\end{equation}

Some more notations: If $\mathbf{x}=\left(  x_{i}\right)  _{i\in\partial
_{L}D_{N}},\ \mathbf{y}=\left(  y_{i}\right)  _{i\in\partial_{R}D_{N}}$, we
write $\mu_{N}^{\mathbf{x},\mathbf{y},\varepsilon}$ for the field on $D_{N}$
with boundary conditions $\mathbf{x}$ and $\mathbf{y}$ on $\partial D_{N},$
and $\varepsilon$-pinning. If we have an event $Q$ which depends on the field
variables only inside $D_{N}^{\circ},$ then%
\[
\mu_{N,\mathrm{ext}}^{\varepsilon}\left(  \left.  Q\right\vert \mathbf{\phi
}_{L}=\mathbf{x},\mathbf{\phi}_{R}=\mathbf{y}\right)  =\mu_{N}^{\mathbf{x}%
,\mathbf{y},\varepsilon}\left(  Q\right)  ,
\]
where $\mathbf{\phi}_{L}=\left\{  \phi_{i}\right\}  _{i\in\partial_{L}D_{N}},$
and $\mathbf{\phi}_{R}$ similarly. This follows from the Markov property and
the fact that the pinning is only inside $D_{N}^{\circ}.$

If $\mathbf{\phi}$ is an element in $\mathbb{R}^{D_{N}^{\circ}}$, we write
$\mathbf{\phi\vee}\left\{  \mathbf{x},\mathbf{y}\right\}  $ for the
configuration which is extended by $\mathbf{x}$ on $\partial_{L}D_{N},$ and
$\mathbf{y}$ on $\partial_{R}D_{N}.$ We set%
\[
U_{N,a,b}\overset{\mathrm{def}}{=}\left\{  \left(  \mathbf{x},\mathbf{y}%
\right)  :x_{i}\in\left[  aN,aN+N^{-2d}\right]  ,\ y_{i}\in\left[
bN,bN+N^{-2d}\right]  ,\right\}
\]
If $\mathbf{\phi}$ is a configuration which satisfies $\left\vert \phi
_{i}\right\vert \leq N^{d}$ for all $i\in D_{N}^{\circ},$ and $\left(
\mathbf{x},\mathbf{y}\right)  \in U_{N,a,b}$, then%
\[
H_{N}\left(  \mathbf{\phi\vee}\left\{  \mathbf{x},\mathbf{y}\right\}  \right)
=H_{N}\left(  \mathbf{\phi\vee}\left\{  aN,bN\right\}  \right)  +O\left(
N^{d-1}N^{-d}\right)  .
\]
Therefore, it follows that for any $Q\subset\left\{  \mathbf{\phi:}\left\vert
\phi_{i}\right\vert \leq N^{d},\ \forall i\in D_{N}^{\circ}\right\}  ,$ one
has%
\[
\mu_{N}^{\varepsilon,\mathbf{x},\mathbf{y}}\left(  Q\right)  =\mu
_{N}^{\varepsilon,aN,bN}\left(  Q\right)  \left(  1+O\left(  N^{-1}\right)
\right)  .
\]
We therefore have%
\begin{align}
&  \mu_{N}^{aN,bN,\varepsilon}\left(  Q\right)  \mu_{N,\mathrm{ext}%
}^{\varepsilon}\left(  \Lambda_{N,a,b}\right)  \nonumber\\
&  =\int_{U_{N,a,b}}\mu_{N}^{\mathbf{x},\mathbf{y},\varepsilon}\left(
Q\right)  \mu_{N,\mathrm{ext}}^{\varepsilon}\left(  \mathbf{\phi}_{L}\in
d\mathbf{x},\mathbf{\phi}_{R}\in d\mathbf{y}\right)  \left(  1+O\left(
N^{-1}\right)  \right)  \label{Extension_trick}\\
&  =\int_{U_{N,a,b}}\mu_{N,\mathrm{ext}}^{\varepsilon}\left(  \left.
Q\right\vert \mathbf{\phi}_{L}=\mathbf{x},\mathbf{\phi}_{R}=\mathbf{y}\right)
\mu_{N,\mathrm{ext}}^{\varepsilon}\left(  \mathbf{\phi}_{L}\in d\mathbf{x}%
,\mathbf{\phi}_{R}\in d\mathbf{y}\right)  (1+O(N^{-1}))  \nonumber\\
&  =\mu_{N,\mathrm{ext}}^{\varepsilon}\left(  Q\cap\Lambda_{N,a,b}\right)
(1+O(N^{-1})) 
\leq\mu_{N,\mathrm{ext}}^{\varepsilon}\left(  Q\right)
(1+O(N^{-1})) ,\nonumber
\end{align}
i.e., with (\ref{LB_Lambdaab})%
\begin{equation}
\mu_{N}^{aN,bN,\varepsilon}\left(  Q\right)  \leq\mu_{N,\mathrm{ext}%
}^{\varepsilon}\left(  Q\right)  \exp\left[  CN^{d}\right]  .\label{Bound_F}%
\end{equation}
We apply this to%
\[
Q\overset{\mathrm{def}}{=}\left\{  \left\Vert h^{\mathrm{cg},\beta,N}%
-h^{N}\right\Vert _{L^1(D)}\geq N^{-\eta}\right\}  \cap\left\{  \left\vert \phi
_{i}\right\vert \leq N^{d},\ \forall i\in D_{N}\right\}  .
\]
Evidently, the restriction to $\left\vert \phi_{i}\right\vert \leq N^{d}$ is
harmless, as%
\begin{equation}
\mu_{N}^{aN,bN,\varepsilon}\left(  \left\vert \phi_{i}\right\vert
>N^{d},\ \mathrm{some\ }i\right)  \leq CN^{d}\exp\left[  -\frac{1}{C}%
N^{2d}\right]  ,\label{Bound_verylarge}%
\end{equation}
and therefore, from \eqref{CG_zero} and \eqref{Bound_F},
\begin{align*}
\mu_{N}^{aN,bN,\varepsilon}\left(  \left\Vert h^{\mathrm{cg},\beta,N}%
-h^{N}\right\Vert _{L^1(D)}\geq N^{-\eta}\right)   &  \leq C\exp\left[  -\frac
{1}{C}N^{d+1-2\eta-\beta}+CN^{d}\right]  +CN^{d}\exp\left[  -\frac{1}{C}%
N^{2d}\right]  \\
&  \leq C\exp\left[  -\frac{1}{C}N^{d+1-2\eta-\beta}\right]  ,
\end{align*}
for large enough $N$, provided $0<2\eta+\beta<1$. This proves Proposition
\ref{Prop_Coarse_graining}.
\end{proof}

One simple consequence of this proposition is the following lemma; recall
\eqref{eq:Ma-H-2} for $h_{\mathrm{{PL}}}^{N}$.

\begin{lemma}
\label{lem:Ma-H-1} For every $\eta>0$, we have that
\[
\mu_{N}^{aN,bN,\varepsilon} \big( \|h^{N} -h_{\mathrm{{PL}}}^{N}\|_{L^{1}(D)}
\ge N^{-\eta} \big)
\le\exp\{-C N^{d+1-2\eta}\}.
\]

\end{lemma}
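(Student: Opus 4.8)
The plan is to deduce the lemma from the concentration estimate behind Proposition~\ref{Prop_Coarse_graining}, the point being that $\|h^{N}-h_{\mathrm{PL}}^{N}\|_{L^{1}(D)}$ is dominated by the total nearest‑neighbour variation of the microscopic field $\phi$, which is handled by exactly the argument used there (indeed a simpler one, since only nearest‑neighbour increments of $\phi$ enter). First I would record a deterministic pointwise bound: for $t\in D$ both $h^{N}(t)$ and $h_{\mathrm{PL}}^{N}(t)$ are, by \eqref{eq:Ma-H-1} and \eqref{eq:Ma-H-2}, convex combinations of the values $\tfrac1N\phi(k)$ with $k$ ranging over the $2^{d}$ corners $[Nt]+\{0,1\}^{d}$ of one unit cube, so that
\[
|h^{N}(t)-h_{\mathrm{PL}}^{N}(t)|\le\frac1N\max_{k,k'\in[Nt]+\{0,1\}^{d}}|\phi(k)-\phi(k')|\le\frac dN\sum_{\langle i,j\rangle\subset[Nt]+\{0,1\}^{d}}|\phi_{i}-\phi_{j}|.
\]
Integrating over $D$ and using that a given bond $\langle i,j\rangle$ lies in $[Nt]+\{0,1\}^{d}$ only for $t$ in a set of Lebesgue measure $O(N^{-d})$, one obtains $\|h^{N}-h_{\mathrm{PL}}^{N}\|_{L^{1}(D)}\le CN^{-d-1}V$ with $V:=\sum_{\langle i,j\rangle\subset D_{N}}|\phi_{i}-\phi_{j}|$ the total variation of $\phi$ over the bonds of \eqref{eq:1.1}. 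Hence it suffices to show $\mu_{N}^{aN,bN,\varepsilon}\big(V\ge cN^{d+1-\eta}\big)\le\exp[-CN^{d+1-2\eta}]$.

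For this I would use the expansion \eqref{Main_Expansion}, reducing matters to bounding $\mu_{A}\big(V\ge cN^{d+1-\eta}\big)$ uniformly in $A\subset D_{N}^{\circ}$. Under $\mu_{A}$ decompose $\phi=m+\tilde\phi$, with $m$ the linear (harmonic) interpolation of the boundary data $aN,bN$ and $\tilde\phi$ the centred Gaussian field with covariance $G_{A}$; since $|m_{i}-m_{j}|\le|a-b|$ on every bond we have $V\le CN^{d}+\widetilde V$ with $\widetilde V:=\sum_{\langle i,j\rangle}|\tilde\phi_{i}-\tilde\phi_{j}|$, so for $\eta<1$ and $N$ large it is enough to bound $\mu_{A}\big(\widetilde V\ge\tfrac c2N^{d+1-\eta}\big)$. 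Writing $\widetilde V=\sup_{\sigma\in\{-1,1\}^{\mathcal E}}Y_{\sigma}$ with $Y_{\sigma}:=\sum_{\langle i,j\rangle}\sigma_{ij}(\tilde\phi_{i}-\tilde\phi_{j})$ and $\mathcal E$ the bond set ($|\mathcal E|\le dN^{d}$), each $Y_{\sigma}$ is a centred Gaussian, and
\[
\operatorname{var}(Y_{\sigma})\le\sum_{\langle i,j\rangle}\sum_{\langle i',j'\rangle}\bigl|\operatorname{Cov}_{\mu_{A}}\!\big(\tilde\phi_{i}-\tilde\phi_{j},\,\tilde\phi_{i'}-\tilde\phi_{j'}\big)\bigr|\le 2|\mathcal E|\,\sup_{x\in D_{N}}\;\sum_{e:|e|=1}\;\sum_{j'\in D_{N}}|\nabla_{j',e}G_{A}(x,j')|.
\]
Exactly as in the proof of Proposition~\ref{Prop_Coarse_graining}, the representation $G_{A}(x,\cdot)=G_{N}(x,\cdot)-\sum_{s}\pi_{A}(x,s)G_{N}(s,\cdot)$ (with $\pi_{A}$ the exit distribution from $A$) together with Lemma~\ref{lem:2.5-b} gives $\sup_{x}\sum_{j'}|\nabla_{j',e}G_{A}(x,j')|\le CN$ uniformly in $A$, so that $\operatorname{var}(Y_{\sigma})\le CN^{d+1}$.

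The Gaussian tail estimate then yields $\mu_{A}\big(Y_{\sigma}\ge\tfrac c2N^{d+1-\eta}\big)\le\exp[-c'N^{d+1-2\eta}]$, and a union bound over the at most $2^{dN^{d}}$ sign patterns gives $\mu_{A}\big(\widetilde V\ge\tfrac c2N^{d+1-\eta}\big)\le\exp[-\tfrac{c'}2N^{d+1-2\eta}]$ for large $N$, provided $\eta<\tfrac12$ so that the rate $N^{d+1-2\eta}$ dominates $N^{d}$; averaging over $A$ against $p_{N}^{\varepsilon}$ concludes. The step I expect to be the main obstacle is the uniform‑in‑$A$ variance bound $\operatorname{var}(Y_{\sigma})\le CN^{d+1}$: the naive inequality $G_{A}\le G_{N}$ is of no use here because what enters is a sum of discrete gradients of $G_{A}$, and this is precisely where the exit‑measure representation of $G_{A}$ and Lemma~\ref{lem:2.5-b} — already exploited in Proposition~\ref{Prop_Coarse_graining} — are needed. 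I note that the argument as written delivers the rate $N^{d+1-2\eta}$ only for $\eta<\tfrac12$, which is the range relevant to the later uses of Lemma~\ref{lem:Ma-H-1}; for larger $\eta$ one may instead combine Proposition~\ref{Prop_Coarse_graining} with its evident analogue for $h_{\mathrm{PL}}^{N}$ in place of $h^{N}$ via $\|h^{N}-h_{\mathrm{PL}}^{N}\|_{L^{1}}\le\|h^{N}-h^{\mathrm{cg},\beta,N}\|_{L^{1}}+\|h^{\mathrm{cg},\beta,N}-h_{\mathrm{PL}}^{N}\|_{L^{1}}$, at the harmless cost of the slightly weaker rate $N^{d+1-2\eta-\beta}$ with $\beta>0$ arbitrarily small.
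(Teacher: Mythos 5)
Your deterministic reduction of $\|h^{N}-h_{\mathrm{PL}}^{N}\|_{L^{1}(D)}$ to the total bond variation $V=\sum_{\langle i,j\rangle}|\phi_{i}-\phi_{j}|$, and the Gaussian/union-bound machinery with the variance controlled through $\sup_{i}\sum_{j}|\nabla_{j,e}G_{A}(i,j)|\leq CN$ (exit-measure representation plus Lemma \ref{lem:2.5-b}), are both fine and match the paper's proof. The genuine gap is the decomposition under $\mu_{A}$. In the expansion \eqref{Main_Expansion} the measure $\mu_{A}^{aN,bN}$ carries boundary condition $0$ on $\partial A\cap D_{N}^{\circ}$ in addition to $aN,bN$ on $\partial D_{N}$, so its mean is the harmonic function $m^{A}$ on $A$ with these \emph{mixed} data (and $\equiv 0$ on $A^{c}$), not the linear interpolation $m_{aN,bN}$. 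With $m$ the linear interpolation, $\tilde\phi=\phi-m$ is not centred under $\mu_{A}$ (on $A^{c}$ one has $\tilde\phi=-m$ deterministically), while for the true mean the bond bound $|m^{A}_{i}-m^{A}_{j}|\leq|a-b|$ fails badly: e.g.\ if $A^{c}$ contains the layer $\{1\}\times\mathbb{T}_{N}^{d-1}$, the $N^{d-1}$ bonds joining it to $\partial_{L}D_{N}$ each carry gradient $aN$, and for general $A$ the mean drops from order-$N$ values to $0$ across single bonds near $A^{c}$. Hence the claimed inequality $V\leq CN^{d}+\widetilde V$ with $\widetilde V$ a centred object does not follow, and this is precisely the complication from non-zero boundary conditions that the paper's proof avoids: it first pays a factor $e^{CN^{d}}$ via \eqref{Bound_F} (built on \eqref{Extension_trick} and \eqref{LB_Lambdaab}) to replace $\mu_{N}^{aN,bN,\varepsilon}$ by the zero-boundary extended field $\mu_{N,\mathrm{ext}}^{\varepsilon}$, and only then expands as in \eqref{Representation}, so that every Gaussian measure $\mu_{A,\mathrm{ext}}$ appearing is centred and only the Green's-function gradient estimate is needed.

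The error is repairable within your scheme, but not with the bound you state. Using the true mean, one has $H_{N}(m^{A})=E_{N,0}(A)\leq\frac{a^{2}+b^{2}}{2}N^{d+1}$ (compare with the configuration vanishing on all of $D_{N}^{\circ}$), whence by Cauchy--Schwarz
\[
\sum_{\langle i,j\rangle}|m^{A}_{i}-m^{A}_{j}|\;\leq\;\bigl(dN^{d}\bigr)^{1/2}\bigl(2H_{N}(m^{A})\bigr)^{1/2}\;\leq\;CN^{d+1/2},
\]
which is $o(N^{d+1-\eta})$ exactly when $\eta<\tfrac12$ --- the same range in which your $2^{dN^{d}}$ union bound (and, in the paper's route, the $e^{CN^{d}}$ cost of \eqref{Bound_F}) leaves the rate $N^{d+1-2\eta}$ intact. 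So either insert this energy/Cauchy--Schwarz control of the true mean in place of the false claim $|m_{i}-m_{j}|\leq|a-b|$, or follow the paper and pass to the extended zero-boundary field before expanding; as written, the centring step fails.
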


\begin{proof}
First, noting that $\sum_{v\in\{0,1\}^{d}} \left[  \prod_{\alpha=1}^{d}
\big( v_{\alpha}\{Nt_{\alpha}\} + (1-v_{\alpha})(1-\{Nt_{\alpha}\})
\big) \right]  =1$, we see that
\begin{align*}
\|h^{N} -h_{\mathrm{{PL}}}^{N}\|_{L^{1}(D)}  &  \le\frac1{N^{d+1}} \sum_{i\in
D_{N}} \sum_{v\in\{0,1\}^{d}} |\phi(i)-\phi(i+v)|\\
&  \le\frac{C_{d}}{N^{d+1}} \sum_{i,j\in D_{N}: |i-j|=1} |\phi(i)-\phi(j)|.
\end{align*}
Therefore, from \eqref{Bound_F} in the proof of Proposition
\ref{Prop_Coarse_graining} and the expansion \eqref{Representation}, it
suffices to prove
\[
\mu_{A,\mathrm{ext}} \left(  \sum_{i,j\in D_{N}: |i-j|=1} |\phi(i)-\phi(j)|
\ge N^{d+1-\eta} \right)  \le\exp\{-C N^{d+1-2\eta}\},
\]
uniformly in $A \subset D_{N,\mathrm{ext}}^{\circ}$. As we discussed in the
proof of Proposition \ref{Prop_Coarse_graining}, setting
\[
X(\sigma) = \sum_{i,j\in D_{N}: |i-j|=1} \sigma_{ij}(\phi(i)-\phi(j))
\]
for $\sigma= (\sigma_{ij}) \in\{-1,1\}^{{\mathbb{B}}_{N}}$, ${\mathbb{B}}_{N}
= \{(i,j); i,j\in D_{N}, |i-j|=1\}$, it suffices to show that
\begin{equation}
\label{eq:Ma-H-3}\mu_{A,\mathrm{ext}} \left(  X(\sigma) \ge N^{d+1-\eta}
\right)  \le\exp\{-C N^{d+1-2\eta}\},
\end{equation}
uniformly in $A$ and $\sigma$. However, $X(\sigma)$ are centered Gaussian
variables and
\begin{align*}
\mathrm{var}_{\mu_{A,\mathrm{ext}}} (X(\sigma))  &  = \sum_{%
\genfrac{}{}{0pt}{}{\scriptstyle i,j\in D_{N}: |i-j|=1
}{\scriptstyle i^{\prime},j^{\prime}\in D_{N}: |i^{\prime}-j^{\prime}|=1}%
} \sigma_{ij}\sigma_{i^{\prime}j^{\prime}} \big( G_{A}(i,i^{\prime}%
)-G_{A}(i,j^{\prime})-G_{A}(j,i^{\prime})+G_{A}(j,j^{\prime})\big)\\
&  \le C_{1}\sum_{i,j\in D_{N}, |e|=1} |G_{A}(i,j)-G_{A}(i,j+e)|\\
&  \le C_{2}\sum_{i,j\in D_{N}, |e|=1} |G_{N,\mathrm{ext}}%
(i,j)-G_{N,\mathrm{ext}}(i,j+e)|\\
&  \le C_{3} N^{d+1},
\end{align*}
by the estimate shown in the proof of Proposition \ref{Prop_Coarse_graining}.
This combined with the Gaussian property of $X(\sigma)$ immediately implies \eqref{eq:Ma-H-3}.
\end{proof}

We draw some other easy consequences from the coarse graining estimate: Given
$\gamma>0$ we define the \textbf{mesoscopic wetted region} by%
\[
\mathcal{M}_{N} \equiv \mathcal{M}_{N}(\phi)
\overset{\mathrm{def}}{=}\bigcup\left\{  C\in\mathcal{B}%
_{N}:\phi_{C}^{\mathrm{cg},\beta,N}\geq N^{\gamma}\right\}  .
\]

We write%
\begin{align*}
\mu_{N}^{aN,bN,\varepsilon}\left(  A_{N,\alpha}\cap\Omega_{N}^{+}\right)   &
=\sum_{B\in\mathcal{\hat{B}}}\mu_{N}^{aN,bN,\varepsilon}\left(  A_{N,\alpha
}\cap\Omega_{N}^{+}\cap\left\{  \mathcal{M}_{N}=B\right\}  \right)  \\
&  \leq\left\vert \mathcal{\hat{B}}\right\vert \max_{B\in\mathcal{\hat{B}}}%
\mu_{N}^{aN,bN,\varepsilon}\left(  A_{N,\alpha}\cap\Omega_{N}^{+}\cap\left\{
\mathcal{M}_{N}=B\right\}  \right)  \\
&  =\exp\left[  N^{d\left(  1-\beta\right)  }\log2\right]  \max_{B\in
\mathcal{\hat{B}}}\mu_{N}^{aN,bN,\varepsilon}\left(  A_{N,\alpha}\cap
\Omega_{N}^{+}\cap\left\{  \mathcal{M}_{N}=B\right\}  \right)  .
\end{align*}
In order to prove (\ref{LDP_main}), it therefore suffices to prove that there
exists $\delta_{1}<d\beta$ and $\alpha>0$ such that%
\begin{equation}
\max_{B\in\mathcal{\hat{B}}}\mu_{N}^{aN,bN,\varepsilon}\left(  A_{N,\alpha
}\cap\Omega_{N}^{+}\cap\left\{  \mathcal{M}_{N}=B\right\}  \right)
\leq\mathrm{e}^{-N^{d-\delta_{1}}},\label{LDP_main2}%
\end{equation}
$N$ large, uniformly in $B.$

Let $\partial^{\ast}B\overset{\mathrm{def}}{=}\partial B\cap D_{N}^{\circ}$.
Any point $i\in\partial^{\ast}\mathcal{M}_N$ is in block $C$ with $\phi_{C}\leq
N^{\gamma}.$ If also $\phi\in\Omega_{N}^{+}$, we conclude that%
\[
\phi(i)\leq N^{d\beta+\gamma}\log N.
\]
We will choose $\gamma,\beta$ such that $d\beta+\gamma<1$, and then choose%
\begin{equation}
\kappa_{1}\overset{\mathrm{def}}{=}\frac{1-d\beta-\gamma}{2}, \label{kappa}%
\end{equation}
so that if $i\in\partial^{\ast}\mathcal{M}_N$ we have%
\begin{equation}
\phi(i)\leq N^{1-\kappa_{1}}. \label{UP_phiondstarb}%
\end{equation}

\begin{lemma}
[Volume filling lemma]\label{Le_Volumefilling}Assume $\gamma+\eta>1$, and
$2\eta+\beta<1$. Then%
\[
\mu_{N}^{aN,bN,\varepsilon}\left(  \left\vert \mathcal{M}_{N}\cap\left\{
i:\phi\left(  i\right)  =0\right\}  \right\vert \geq N^{d+1-\gamma-\eta
}\right)  \leq C\exp\left[  -\frac{1}{C}N^{d+1-2\eta-\beta}\right]  .
\]

\end{lemma}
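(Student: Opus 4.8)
The plan is to reduce the statement to the coarse-graining estimate already proven in Proposition \ref{Prop_Coarse_graining}. The point is that on the mesoscopic wetted region $\mathcal{M}_N$ the coarse-grained field $\phi_C^{\mathrm{cg},\beta,N}$ is at least $N^\gamma$ in every box $C\subset\mathcal{M}_N$, so that $h^{\mathrm{cg},\beta,N}(x)\ge N^{\gamma-1}$ on the macroscopic image of $\mathcal{M}_N$. On the other hand, at every site $i\in\mathcal{M}_N$ with $\phi(i)=0$ one has $h^N(x)=0$ for $x$ in the corresponding microscopic box. Hence, on the part of $\mathcal{M}_N$ where the field vanishes, the two macroscopic profiles differ by at least $N^{\gamma-1}$ pointwise.

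Concretely, suppose the event $\{|\mathcal{M}_N\cap\{i:\phi(i)=0\}|\ge N^{d+1-\gamma-\eta}\}$ occurs. The set of sites contributes a macroscopic volume of at least $N^{-d}\cdot N^{d+1-\gamma-\eta}=N^{1-\gamma-\eta}$ to $D$, and on each such box the integrand $|h^{\mathrm{cg},\beta,N}-h^N|$ is bounded below by $N^{\gamma-1}$. Integrating,
\[
\|h^{\mathrm{cg},\beta,N}-h^N\|_{L^1(D)}\ge N^{\gamma-1}\cdot N^{1-\gamma-\eta}=N^{-\eta}.
\]
Therefore the event in the lemma is contained in $\{\|h^{\mathrm{cg},\beta,N}-h^N\|_{L^1(D)}\ge N^{-\eta}\}$, and Proposition \ref{Prop_Coarse_graining}, valid under the hypothesis $2\eta+\beta<1$, gives the stated bound $C\exp[-\frac1C N^{d+1-2\eta-\beta}]$.

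I expect the only slightly delicate point is bookkeeping the constants in the volume/height comparison: one must check that $h^{\mathrm{cg},\beta,N}$ really is $\ge N^{\gamma-1}$ (not merely $\ge cN^{\gamma-1}$) on the macroscopic image of each box of $\mathcal{M}_N$, which follows directly from the definition $h^{\mathrm{cg},\beta,N}(x)=\frac1N\phi^{\mathrm{cg},\beta,N}([xN])$ together with $\phi_C^{\mathrm{cg},\beta,N}\ge N^\gamma$; and that the contribution of the vanishing-field sites to the $L^1$-norm is not spoiled by overlap of the boxes $B(i/N,1/N)$, which it is not since the $h^N$-boxes partition $D$. The hypothesis $\gamma+\eta>1$ is used precisely to make the exponent $1-\gamma-\eta$ of the volume estimate negative-free in the right direction, i.e.\ so that $N^{\gamma-1}\cdot N^{1-\gamma-\eta}$ collapses exactly to $N^{-\eta}$; no further input is needed. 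Thus the lemma is essentially a corollary of Proposition \ref{Prop_Coarse_graining} once the elementary geometric comparison is set up.
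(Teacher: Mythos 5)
Your argument is correct and is essentially the paper's own proof: the paper lower-bounds the microscopic sum $\sum_i|\phi(i)-\phi^{\mathrm{cg},\beta,N}(i)|$ by $|\mathcal{M}_N\cap\{\phi=0\}|\,N^{\gamma}$ and then invokes Proposition \ref{Prop_Coarse_graining}, which is exactly your macroscopic $L^1$ comparison rewritten at the lattice level. The only cosmetic difference is that you phrase the comparison via $h^N$ and $h^{\mathrm{cg},\beta,N}$ (where the two step functions use slightly offset box partitions, costing at worst a harmless constant factor) rather than via the normalized microscopic sum, so no new input is needed.
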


\begin{proof}
Remark that%
\begin{align*}
&  \sum_{i}\left\vert \phi\left(  i\right)  -\phi^{\mathrm{cg},\beta,N}\left(
i\right)  \right\vert \\
&  \geq\sum_{i\in\mathcal{M}_N\cap\left\{  i:\phi\left(  i\right)  =0\right\}
}\left\vert \phi\left(  i\right)  -\phi^{\mathrm{cg},\beta,N}\left(  i\right)
\right\vert \geq\left\vert \mathcal{M}_{N}\cap\left\{
i:\phi\left(  i\right)  =0\right\}  \right\vert N^{\gamma}.
\end{align*}
Therefore, from Proposition \ref{Prop_Coarse_graining} we get%
\begin{align*}
&  \mu_{N}^{aN,bN,\varepsilon}\left(  \left\vert \mathcal{M}_{N}
\cap\left\{  i:\phi\left(  i\right)  =0\right\}  \right\vert \geq
N^{d+1-\gamma-\eta}\right)  \\
&  \leq\mu_{N}^{aN,bN,\varepsilon}\left(  N^{-d-1}\sum_{i}\left\vert
\phi\left(  i\right)  -\phi^{\mathrm{cg},\beta,N}\left(  i\right)  \right\vert
\geq N^{-\eta}\right)  \\
&  \leq C\exp\left[  -\frac{1}{C}N^{d+1-2\eta-\beta}\right]
\end{align*}
which proves the claim.
\end{proof}

The different requirements on $\beta,\eta,\gamma>0$ are%
\begin{align*}
2\eta+\beta &  <1,\\
d\beta+\gamma &  <1,\\
\eta+\gamma &  >1.
\end{align*}
We can fulfill them by taking for instance%
\[
\beta=\frac{1}{10d},\ \gamma=\frac{4}{5},\ \eta=\frac{1}{4}.
\]
From now on, we keep these constants fixed under the above restrictions, for
instance with the above values. We put%
\[
\kappa_{2}\overset{\mathrm{def}}{=}\gamma+\eta-1,\ \kappa_{3}\overset
{\mathrm{def}}{=}\frac{1-\left(  2\eta+\beta\right)  }{2},
\]
so that, by the volume filling lemma, we have%
\begin{equation}
\mu_{N}^{\varepsilon}\left(  \left\vert \mathcal{M}_{N}\cap\left\{
i:\phi\left(  i\right)  =0\right\}  \right\vert \geq N^{d-\kappa_{2}}\right)
\leq\exp\left[  -N^{d+\kappa_{3}}\right]  . \label{volume_filling}%
\end{equation}

\subsection{Proof of \eqref{LDP_main2}}

\label{section:6.3}

If $A\subset D_{N}^{\circ}$, we write $A_{\mathrm{ext}}\overset{\mathrm{def}%
}{=}A\cup\left(  D_{N,\mathrm{ext}}\backslash D_{N}^{\circ}\right)  $. Using
Lemma \ref{lem:3} (patching at $\partial D_N$), we have%
\[
Z_{A_{\mathrm{ext}}}=Z_{A}Z_{D_{N,\mathrm{ext}}\backslash D_{N}^{\circ}}%
\exp\left[  O\left(  N^{d-1}\right)  \right]  ,
\]
and using Lemma \ref{lem:2}, one has%
\[
Z_{D_{N,\mathrm{ext}}\backslash D_{N}^{\circ}}=\exp\left[  2\hat{q}^{0}%
N^{d}+O\left(  N^{d-1}\right)  \right].
\]
Note that these partition functions are defined without pinning.
Therefore
\begin{align*}
Z_{N,\mathrm{ext}}^{\varepsilon} &  :=\sum_{A\subset D_{N}^{\circ}}%
\varepsilon^{\left\vert D_{N}^{\circ}\backslash A\right\vert }%
Z_{A_{\mathrm{ext}}}\\
&  =\exp\left[  2N^{d}\hat{q}^{0}+O\left(  N^{d-1}\right)  \right]
\sum_{A\subset D_{N}^{\circ}}\varepsilon^{\left\vert D_{N}^{\circ}\backslash
A\right\vert }Z_{A}\\
&  =\exp\left[  2N^{d}\hat{q}^{0}+N^{d}\hat{q}^{\varepsilon}+O\left(
N^{d-1}\right)  \right]  ,
\end{align*}
where we have used a version of (2.3.4) of \cite{BI}. Therefore,%
\[
\mu_{N,\mathrm{ext}}^{\varepsilon}=\exp\left[  -N^{d}\hat{q}^{\varepsilon
}-2N^{d}\hat{q}^{0}+O\left(  N^{d-1}\right)  \right]  \sum_{A\subset
D_{N}^{\circ}}\varepsilon^{\left\vert D_{N}^{\circ}\backslash A\right\vert
}Z_{A_{\mathrm{ext}}}\mu_{A,\mathrm{ext}}.
\]
However, we can estimate
\begin{gather*}
\sum_{A\subset D_{N}^{\circ}}\varepsilon^{\left\vert D_{N}^{\circ}\backslash
A\right\vert }Z_{A,\mathrm{ext}}\mu_{A,\mathrm{ext}}\left(  \Lambda
_{N,a,b}\right)  \geq Z_{D_{N,\mathrm{ext}}^{\circ}}\mu_{N,\mathrm{ext}%
}\left(  \Lambda_{N,a,b}\right)  \\
=Z_{D_{N,\mathrm{ext}}^{\circ}}\exp\left[  -\frac{N^{d}}{2}\left(  a^{2}%
+b^{2}+\left(  b-a\right)  ^{2}\right)  +O\left(  N^{d-1} \log N\right)  \right]
\end{gather*}
by (\ref{MuNext_Lambdaab}) and (\ref{LB_Lambda00}). Using%
\[
Z_{D_{N,\mathrm{ext}}^{\circ}}=\exp\left[  3N^{d}\hat{q}^{0}+O\left(
N^{d-1}\right)  \right],
\]
and recalling $\xi^\varepsilon=\hat q^\varepsilon -\hat q^0$ as in
Remark \ref{rem:3.3}, we obtain
\begin{equation}
\mu_{N,\mathrm{ext}}^{\varepsilon}\left(  \Lambda_{N,a,b}\right)  \geq
\exp\left[  -N^{d}\left\{  \frac{a^{2}+b^{2}+\left(  b-a\right)  ^{2}}2 
+\xi^{\varepsilon}\right\} + O\left(  N^{d-1}\log N \right)\right].
\label{Eq2}%
\end{equation}

We use now $\mu_{N}^{\varepsilon,\mathbf{x},\mathbf{y}}$ as defined after
(\ref{LB_Lambdaab}). Arguing in the same way as in (\ref{Extension_trick}), we
obtain with the abbreviation
$
\mathcal{B}_{N,\alpha}\overset{\mathrm{def}}{=}\left\{  \mathcal{M}_N=B\right\}
\cap\Omega_{N}^{+}\cap A_{N,\alpha}%
$,
\begin{align*}
&  \mu_{N}^{\varepsilon,aN,bN}\left(  \mathcal{B}_{N,\alpha}\right)
\mu_{N,\mathrm{ext}}^{\varepsilon}\left(  \Lambda_{N,a,b}\right)  \\
&  =\mu_{N,\mathrm{ext}}^{\varepsilon}\Big (\mathcal{B}_{N,\alpha}\cap\left\{
\phi%
\vert
_{\partial D_{N}}\in U_{N,a,b}\right\}  \Big )\left(  1+O\left(
N^{-1}\right)  \right)  .
\end{align*}
Combining this with (\ref{Eq2}) gives%
\begin{align}
\mu_{N}^{\varepsilon,aN,bN}\left(  \mathcal{B}_{N,\alpha}\right)   &  \leq
\mu_{N,\mathrm{ext}}^{\varepsilon}\left(  \mathcal{B}_{N,\alpha}\cap\left\{
\phi%
\vert
_{\partial D_{N}}\in U_{N,a,b}\right\}  \right)  \label{Equ3}\\
&  \times\exp\left[  N^{d}\left\{  \frac{a^{2}+b^{2}+\left(  b-a\right)  ^{2}%
}{2}+\xi^{\varepsilon}\right\} +O\left(  N^{d-1}\log N\right) \right].
\nonumber
\end{align}
For the expression on the right hand side, we use the usual splitting%
\[
\mu_{N,\mathrm{ext}}^{\varepsilon}\left(  \cdot\right)  =\sum_{A\subset
D_{N,\mathrm{ext}}^{\circ},\ A^{c}\subset D_{N}^{\circ}}\frac{\varepsilon
^{\left\vert D_{N}^{\circ}\backslash A\right\vert }Z_{A_{\mathrm{ext}}}%
}{Z_{N,\mathrm{ext}}^{\varepsilon}}\mu_{A,\mathrm{ext}}\left(  \cdot\right).
\]
From \eqref{volume_filling}, we know that we can restrict the summation
to $A$ with $|B\setminus A| \le N^{d-\kappa_2}$, 
up to a contribution of order $\exp\left[
-N^{d+\kappa_{3}}\right]  $, which we can neglect. Splitting $A$ into
$A_{1}\cup A_{2}$ with $A_{2}\overset{\mathrm{def}}{=}A\cap B$, and using
(2.3.4) of \cite{BI} and Lemma \ref{lem:2},%
\begin{align*}
Z_{A_{1}\cup A_{2},\mathrm{ext}} &  \leq Z_{A_{2},\mathrm{ext}}Z_{A_{1}}%
\exp\left[  O\left(  N^{d-\beta}\right)  \right]  \\
&  \leq Z_{B,\mathrm{ext}}Z_{A_{1}}\exp\left[  O\left(  N^{d-\beta}\right)
\right]  \\
&  \leq Z_{A_{1}}\exp\left[ (2N^d+ \left\vert B\right\vert) \hat{q}^{0}+O\left(
N^{d-\beta}\right)  \right]  ,
\end{align*}
it suffices to estimate%
\[
J_{N}\left(  B,A_{2}\right)  =\sum_{A_{1}:A_{1}\cap B=\emptyset}%
\varepsilon^{\left\vert B^{c}\cap A_{1}^{c}\right\vert }Z_{A_{1}}\mu
_{A_{1}\cup A_{2},\mathrm{ext}}\left(  \mathcal{B}_{N,\alpha}\cap\left\{  \phi%
\vert
_{\partial D_{N}}\in U_{N,a,b}\right\}  \right)
\]
uniformly in $B,A_{2}$. If we prove that for all $\delta>0$ sufficiently small, 
there exists
$\alpha<1$ such that for all mesoscopic $B$ and all $A_{2}\subset B$ with
$\left\vert B\backslash A_{2}\right\vert \leq N^{d-\kappa_{2}}$ we have%
\begin{equation}
J_{N}\left(  B,A_{2}\right)  \exp\left[  N^{d}\frac{a^{2}+b^{2}+\left(
b-a\right)  ^{2}}{2}-\left\vert B^{c}\right\vert \hat q^{0}\right]  \leq
\exp\left[  -N^{d-\delta}\right]  \label{Est_nec_J}%
\end{equation}
for large enough $N$ (uniformly in $B,A_{2}$), we have proved \eqref{LDP_main2}.

Note that 
\[
\mathcal{B}_{N,\alpha}\subset\left\{  -\log N\leq\phi%
\vert
_{\partial^{\ast}B}\leq N^{d-\kappa_{1}}\right\}  \cap\left\{  \mathcal{M}_N%
=B\right\}  \cap A_{N,\alpha}.
\]
On $\partial^{\ast}B\cap\left(  A_{1}\cup A_{2}\right)  ^{c}$, $\phi$ is of
course $0$ under $\mu_{A_{1}\cup A_{2},\mathrm{ext}}$. We define $\hat{\mu
}_{B,A_{1},A_{2},\mathbf{x}}$ to be the free field on $\mathbb{R}^{A_{2}%
\cup\left(  D_{N,\mathrm{ext}}\backslash D_{N}^{\circ}\right)  }$ with
boundary condition $0$ on $\partial D_{N,\mathrm{ext}}\cap\left(  A_{1}\cup
A_{2}\right)  ^{c}$ and boundary condition $\mathbf{x}$ on $\partial^{\ast
}B\cap\left(  A_{1}\cup A_{2}\right)  $. Then%
\begin{align*}
&  \mu_{A_{1}\cup A_{2},\mathrm{ext}}\left(  \mathcal{B}_{N,\alpha}%
\cap\left\{  \phi%
\vert
_{\partial D_{N}}\in U_{N,a,b}\right\}  \cap A_{N,\alpha}\right)  \\
&  \leq\mu_{A_{1}\cup A_{2},\mathrm{ext}}\left(  \left\{  -\log N\leq\phi%
\vert
_{\partial^{\ast}B}\leq N^{d-\kappa_{1}}\right\}  ,\mathcal{M}_N=B,\phi%
\vert
_{\partial D_{N}}\in U_{N,a,b},A_{N,\alpha}\right)  \\
&  \leq\int\limits_{-\log N\leq\mathbf{x}\leq N^{1-\kappa_{1}}}\hat{\mu
}_{B,A_{1},A_{2},\mathbf{x}}\left(  \phi%
\vert
_{\partial D_{N}}\in U_{N,a,b},\mathcal{M}_N=B,A_{N,\alpha}\right)  \mu
_{A_{1}\cup A_{2},\mathrm{ext}}\left(  \phi%
\vert
_{\partial^{\ast}B}\in d\mathbf{x}\right)  \\
&  \leq\mu_{A_{1}\cup A_{2},\mathrm{ext}}\left(  -\log N\leq\phi%
\vert
_{\partial^{\ast}B}\leq N^{1-\kappa_{1}}\right)  \\
&  \times\sup_{\mathbf{x}\leq N^{1-\kappa_{1}}}\hat{\mu}_{B,A_{1}%
,A_{2},\mathbf{x}}\left(  \phi%
\vert
_{\partial D_{N}}\in U_{N,a,b},\mathcal{M}_N=B,A_{N,\alpha}\right)  \\
&  \leq\sup_{\mathbf{x}\leq N^{1-\kappa_{1}}}\hat{\mu}_{B,A_{1},A_{2}%
,\mathbf{x}}\left(  \phi%
\vert
_{\partial D_{N}}\in U_{N,a,b},\mathcal{M}_N=B,A_{N,\alpha}\right)  .
\end{align*}

There is a slightly awkward dependence of the right hand side on $A_{1}$: If a point
$i\in\partial^{\ast}B$ is in $\partial^{\ast}A_{2}$ but not in $A_{1}$, then
the boundary condition there is $0.$ However, if it is in $A_{1}$, then the
boundary condition can be arbitrary $\leq N^{1-\kappa_{1}}$. If we allow for
arbitrary boundary condition $\mathbf{x}$ on $\partial^{\ast}A_{2}$, of course
with $\mathbf{x}\leq N^{1-\kappa_{1}}$ and denote the corresponding measure on
$\mathbb{R}^{A_{2}}$ by $\bar{\mu}_{A_{2},\mathbf{x}}$, then%
\begin{align*}
&  \sup_{\mathbf{x}\leq N^{1-\kappa_{1}}}\hat{\mu}_{B,A_{1},A_{2},\mathbf{x}%
}\left(  \phi%
\vert
_{\partial D_{N}}\in U_{N,a,b},\mathcal{M}_N=B,A_{N,\alpha}\right)  \\
&  \leq\sup_{\mathbf{x}\leq N^{1-\kappa_{1}}}\bar{\mu}_{A_{2},\mathbf{x}%
}\left(  \phi%
\vert
_{\partial D_{N}}\in U_{N,a,b},\mathcal{M}_N=B,\ A_{N,\alpha}\right)  ,
\end{align*}
and the right hand side has no longer a dependence on $A_{1}$. Therefore, we
just get%
\begin{align*}
J_{N}\left(  B,A_{2}\right)   &  =\sum_{A_{1}:A_{1}\cap B=\emptyset
}\varepsilon^{\left\vert B^{c}\cap A_{1}^{c}\right\vert }Z_{A_{1}}\mu
_{A_{1}\cup A_{2},\mathrm{ext}}\left(  \mathcal{B}_{N,\alpha},\ \phi%
\vert
_{\partial D_{N}}\in U_{N,a,b}\right)  \\
&  \leq\left(  \sum_{A_{1}:A_{1}\cap B=\emptyset}\varepsilon^{\left\vert
B^{c}\cap A_{1}^{c}\right\vert }Z_{A_{1}}\right)  \sup_{\mathbf{x}\leq
N^{1-\kappa_{1}}}\bar{\mu}_{A_{2},\mathbf{x}}\left(  \phi%
\vert
_{\partial D_{N}}\in U_{N,a,b},\mathcal{M}_N=B,\ A_{N,\alpha}\right)  \\
&  =\exp\left[  \left\vert B^{c}\right\vert \hat q^{\varepsilon}+O\left(
N^{d-\beta}\right)  \right]  \sup_{\mathbf{x}\leq N^{1-\kappa_{1}}}\bar{\mu
}_{A_{2},\mathbf{x}}\left(  \phi%
\vert
_{\partial D_{N}}\in U_{N,a,b},\mathcal{M}_N=B,\ A_{N,\alpha}\right)  .
\end{align*}
Therefore, we are left with estimating the above supremum. We distinguish two cases:

\noindent\textbf{First case:}%
\begin{equation}
E_{N,0}\left(  A_{2}\right)  -\xi^{\varepsilon}\left\vert B^{c}\right\vert
\geq N^{d}\inf_{h}\Sigma\left(  h\right)  +N^{d-\chi}\label{first_case}%
\end{equation}
with $\chi>0$ to be chosen later. In this case, we drop $\mathcal{M}_N
=B,\ A_{N,\alpha}$ and obtain%
\begin{align*}
&  \sup_{\mathbf{x}\leq N^{1-\kappa_{1}}}\bar{\mu}_{A_{2},\mathbf{x}}\left(
\phi%
\vert
_{\partial D_{N}}\in U_{N,a,b},\mathcal{M}_N=B,\ A_{N,\alpha}\right)  \\
&  \leq\sup_{\mathbf{x}\leq N^{1-\kappa_{1}}}\bar{\mu}_{A_{2},\mathbf{x}%
}\left(  \phi%
\vert
_{\partial_{L}D_{N}}\geq aN,\phi%
\vert
_{\partial_{R}D_{N}}\geq bN\right)  .
\end{align*}
By the FKG inequality,
the last expression can be estimated from above by putting all boundary
conditions (including at $\partial D_{N,\mathrm{ext}}$) at $N^{1-\kappa_{1}}$.
By shifting the field and the boundary conditions down by $N^{1-\kappa_{1}}$,
we obtain from Lemma \ref{lemma:6.5} that the right hand side is%
\begin{align*}
&  \leq\exp\left[  -\Xi\left(  A_{2},aN-N^{1-\kappa_{1}},bN-N^{1-\kappa_{1}%
}\right)  -N^{d}\frac{a^{2}+b^{2}}{2}+O\left(  N^{d-\kappa_{4}}\right)
\right]  \\
&  =\exp\left[  -\Xi\left(  A_{2},aN,bN\right)  -N^{d}\frac{a^{2}+b^{2}}%
{2}+O\left(  N^{d-\kappa_{5}}\right)  \right]  \\
&  =\exp\left[  -E_{N,0}\left(  A_{2}\right)  -N^{d}\frac{a^{2}+b^{2}}{2}+O\left(
N^{d-\kappa_{5}}\right)  \right]  ,
\end{align*}
with some constant $\kappa_{4},\kappa_{5}>0$, which depend only on the fixed
values $\beta,\gamma,\eta$. Summarizing, we get%
\begin{align*}
&  \exp\left[  N^{d}\frac{a^{2}+b^{2}+\left(  b-a\right)  ^{2}}{2}-\left\vert
B^{c}\right\vert \hat{q}^{0}\right]  J_{N}\left(  B,A_{2}\right)  \\
&  \leq\exp\left[  N^{d}\frac{\left(  b-a\right)  ^{2}}{2}+\left\vert
B^{c}\right\vert \xi^{\varepsilon}-E_{N,0}\left(  A_{2}\right)  +O\left(
N^{d-\min\left(  \beta,\kappa_{5}\right)  }\right)  \right]  .
\end{align*}
Remember now, that we have%
\[
\frac{\left(  b-a\right)  ^{2}}{2}=\inf_{h}\Sigma\left(  h\right).
\]
Therefore, from \eqref{first_case}, 
if we choose $\chi>0$ small enough, but smaller than $\min\left(
\beta,\kappa_{5}\right)  ,$ we have proved the bound \eqref{Est_nec_J}
in this case.  (Here actually, $\alpha$
plays no role). This $\chi$ will be fixed from now on.

\noindent\textbf{Second case: }%
\begin{equation}
E_{N,0}\left(  A_{2}\right)  -\xi^{\varepsilon}\left\vert B^{c}\right\vert
\leq N^{d}\inf_{h}\Sigma\left(  h\right)  +N^{d-\chi}. \label{Second}%
\end{equation}

Given $\mathbf{x}\in\mathbb{R}_{\partial^{\ast}A_{2}}$, $-\log N\leq
\mathbf{x}\leq N^{1-\kappa_{3}}$, $\mathbf{y}_{L}\in\mathbb{R}^{\partial
_{L}D_{N}}$, and $\mathbf{y}_{R}\in\mathbb{R}^{\partial_{R}D_{N}}$ with
$aN\leq\mathbf{y}_{L}\leq aN+N^{-2d},\ bN\leq\mathbf{y}_{R}\leq bN+N^{-2d}$,
we write $\phi_{\mathbf{x},\mathbf{y}_{L},\mathbf{y}_{R}}$ for the harmonic
function with these boundary conditions. If the boundary conditions are $0$
and $aN,bN$ respectively, we write $\phi_{A_{2}}$ (or $\bar\phi^{A_2}$
in Section \ref{section:3.2}).  From the maximum principle, we
know that%
\[
\sup_{i\in A_{2}}\left\vert \phi_{\mathbf{x},\mathbf{y}_{L},\mathbf{y}_{R}%
}\left(  i\right)  -\phi_{A_{2}}\left(  i\right)  \right\vert \leq
N^{1-\kappa_{3}},
\]
and therefore%
\[
\sum_{i\in A_{2}}\left\vert \phi_{\mathbf{x},\mathbf{y}_{L},\mathbf{y}_{R}%
}\left(  i\right)  -\phi_{A_{2}}\left(  i\right)  \right\vert \leq
N^{d+1-\kappa_{3}}.
\]
By the stability (rigidity) results obtained in Proposition \ref{prop:stability-meso}, 
we have that either%
\[
\sum_{i}\left\vert \phi_{A_{2}}\left(  i\right)  -N\bar{h}\left(  \frac{i}%
{N}\right)  \right\vert \leq N^{d+1-\kappa_{6}}%
\]
or%
\[
\sum_{i}\left\vert \phi_{A_{2}}\left(  i\right)  -N\hat{h}\left(  \frac{i}%
{N}\right)  \right\vert \leq N^{d+1-\kappa_{6}},
\]
where $\kappa_{6}>0$ depends on $\chi.$ Therefore, putting $\kappa_{7}%
\overset{\mathrm{def}}{=}\min\left(  \kappa_{6},\kappa_{3}\right)  $, we have,
uniformly in $\mathbf{x},\mathbf{y}_{L},\mathbf{y}_{R}$ satisfying the above
conditions that either%
\[
\sup_{\mathbf{x},\mathbf{y}_{L},\mathbf{y}_{R}}\sum_{i}\left\vert
\phi_{\mathbf{x},\mathbf{y}_{L},\mathbf{y}_{R}}\left(  i\right)  -N\bar
{h}\left(  \frac{i}{N}\right)  \right\vert \leq N^{d+1-\kappa_{7}}%
\]
or%
\[
\sup_{\mathbf{x},\mathbf{y}_{L},\mathbf{y}_{R}}\sum_{i}\left\vert
\phi_{\mathbf{x},\mathbf{y}_{L},\mathbf{y}_{R}}\left(  i\right)  -N\hat
{h}\left(  \frac{i}{N}\right)  \right\vert \leq N^{d+1-\kappa_{7}}.
\]
Therefore, if we choose $\alpha>0$ smaller than $\kappa_{7}$ we have that%
\[
\operatorname*{dist}\nolimits_{L_{1}}\left(  h_{N},\left\{  \hat{h},\bar
{h}\right\}  \right)  \geq N^{-\alpha}%
\]
implies%
\[
\sum_{i}\left\vert \phi_{\mathbf{x},\mathbf{y}_{L},\mathbf{y}_{R}}\left(
i\right)  -\phi\left(  i\right)  \right\vert \geq\frac{1}{2}N^{1+d-\alpha}%
\]
for all $\mathbf{x},\mathbf{y}_{L},\mathbf{y}_{R}$ under the above
restrictions. Therefore,%
\begin{align*}
&  \bar{\mu}_{A_{2},\mathbf{x}}\left(  \phi%
\vert
_{\partial D_{N}}\in U_{N,a,b},\mathcal{M}_N=B,\ A_{N,\alpha}\right) \\
&  \leq\bar{\mu}_{A_{2},\mathbf{x}}\left(  \phi%
\vert
_{\partial D_{N}}\in U_{N,a,b},\sum_{i}\left\vert \phi_{\mathbf{x}%
,\mathbf{y}_{L},\mathbf{y}_{R}}\left(  i\right)  -\phi\left(  i\right)
\right\vert \geq\frac{1}{2}N^{1+d-\alpha}\right)  .
\end{align*}
Applying the Markov property at $\partial D_{N}$, we can bound that by%
\[
\bar{\mu}_{A_{2},\mathbf{x}}\left(  \phi%
\vert
_{\partial_{L}D_{N}}\geq aN,\ \phi%
\vert
_{\partial_{R}D_{N}}\geq bN\right)  \sup_{\mathbf{x,\mathbf{y}}_{L}%
,\mathbf{y}_{R}}\widetilde{\mu}_{A_{2},\mathbf{x,\mathbf{y}}_{L}%
,\mathbf{y}_{R}}\left(  \sum_{i}\left\vert \phi_{\mathbf{x},\mathbf{y}%
_{L},\mathbf{y}_{R}}\left(  i\right)  -\phi\left(  i\right)  \right\vert
\geq\frac{1}{2}N^{1+d-\alpha}\right)  ,
\]
where $\widetilde{\mu}_{A_{2},\mathbf{x,\mathbf{y}}_{L},\mathbf{y}_{R}}$ is
the free field on $\mathbb{R}^{A_{2}}$ with boundary conditions
$\mathbf{x,\mathbf{y}}_{L},\mathbf{y}_{R}$. Remark that $\phi_{\mathbf{x}%
,\mathbf{y}_{L},\mathbf{y}_{R}}\left(  i\right)  $ is the expectation of
$\phi\left(  i\right)  $ under $\widetilde{\mu}_{A_{2},\mathbf{x,\mathbf{y}%
}_{L},\mathbf{y}_{R}}.$   We write $\tilde E$ for the expectation under
$\tilde\mu := \widetilde{\mu}_{A_{2},\mathbf{x,\mathbf{y}
}_{L},\mathbf{y}_{R}}.$  Then, 
\begin{align*}
m:= & \tilde E \left[\sum_i\big|\tilde E[\phi(i)] - \phi(i)\big|\right] \\
\le&  \sum_i \sqrt{\operatorname{var}_{\tilde\mu}(\phi(i))}
= O(N^d),
\end{align*}
uniformly in $A_{2},\mathbf{x,\mathbf{y}}_{L},\mathbf{y}_{R}$.
Therefore, if $\alpha<1$, by (4.4) of \cite{Le}
\begin{align*}
\tilde\mu \left(\sum_i\big|\tilde E[\phi(i)] - \phi(i)\big|\ge \tfrac12 N^{1+d-\alpha}\right)
& \le \tilde\mu \left(\sum_i\big|\tilde E[\phi(i)] - \phi(i)\big|\ge m+\tfrac14 N^{1+d-\alpha}\right)\\
& \le \exp\left( - \frac{N^{2+2d-2\alpha}}{32\sigma^2}\right),
\end{align*}
where
$$
\sigma^2 = \sup\left\{ \operatorname{var}_{\tilde\mu} \left(\sum_ig(i)\phi(i) \right);
\sup_i |g(i)|\le 1\right\}.
$$
However, one can estimate 
$$
\sigma^2 \le \sum_{i,j\in A_2} G_{A_2}(i,j) \le C N^{d+2}.
$$
Therefore, if $0< 2\alpha<\delta$, we get
\[
\widetilde{\mu}_{A_{2},\mathbf{x,\mathbf{y}}_{L},\mathbf{y}_{R}}\left(
\sum_{i}\left\vert \phi_{\mathbf{x},\mathbf{y}_{L},\mathbf{y}_{R}}\left(
i\right)  -\phi\left(  i\right)  \right\vert \geq\frac{1}{2}N^{1+d-\alpha
}\right)  \leq\exp\left[  -N^{d-\delta}\right]  ,
\]
uniformly in $A_{2},\mathbf{x,\mathbf{y}}_{L},\mathbf{y}_{R}.$ Estimating
$\bar{\mu}_{A_{2},\mathbf{x}}\left(  \phi%
\vert
_{\partial_{L}D_{N}}\geq aN,\ \phi%
\vert
_{\partial_{R}D_{N}}\geq bN\right)  $ in the same way as in the first case, we
arrive at \eqref{Est_nec_J} also in this case.  .

\end{document}